\tikzstyle{arrow} = [thick,->,>=latex]
\theoremstyle{plain}
\newtheorem{theorem}{Theorem}[section]
\newtheorem*{theorem*}{Theorem}
\newtheorem{proposition}[theorem]{Proposition}
\newtheorem{lemma}[theorem]{Lemma}
\newtheorem*{conjecture*}{Conjecture}
\theoremstyle{definition}
\newtheorem{definition}[theorem]{Definition}
\newtheorem{example}[theorem]{Example}
\newcommand{\thistheoremname}{}
\newtheorem*{genericthm}{\thistheoremname}
\newenvironment{any}[1]
  {\renewcommand{\thistheoremname}{#1}
   \begin{genericthm}}
  {\end{genericthm}}
\newcommand{\thistheoremnameNum}{}
\newtheorem{genericthm_num}[theorem]{\thistheoremnameNum}
\newenvironment{numbered_any}[1]
  {\renewcommand{\thistheoremnameNum}{#1}
   \begin{genericthm_num}}
  {\end{genericthm_num}}
\theoremstyle{remark}
\newtheorem*{remark}{Remark}
\numberwithin{equation}{section}
\newcommand{\ov}[1]{\overline{#1}}
\renewcommand\P{\overline P}
\newcommand\A{\mathcal A}
\newcommand\G{\Gamma}
\newcommand\bdry{\partial}
\def\C{\mathbb{C}}
\def\R{\mathbb{R}}
\def\Z{\mathbb{Z}}
\def\F{\mathbb{F}}
\def\S{\mathcal{S}} %
\let\emptyset\varnothing
\def\b{\boldsymbol}
\def\dualSmallBar{{}^{\A}\ov{bar}{}^{\A}}
\def\dualSmallBarRed{{}^{\A}\ov{bar}_{r}{}^{\A}}
\begin{document}

\title[Explicit]{Bordered theory for pillowcase homology}

\author{Artem Kotelskiy}
\address{Department of Mathematics \\ Princeton University}
\curraddr{Department of Mathematics \\ Indiana University}
\urladdr{http://artofkot.github.io/}
\email{artofkot@iu.com}
\begin{abstract}  
	We construct an algebraic version of Lagrangian Floer homology for immersed curves inside the pillowcase. We first associate to the pillowcase an algebra $\A$. Then to an immersed curve $L$ inside the pillowcase we associate an $A_\infty$ module $M(L)$ over $\A$. Then we prove that Lagrangian Floer homology $HF(L,L')$ is isomorphic to a suitable algebraic pairing of modules $M(L)$ and $M(L')$. This extends the pillowcase homology construction --- given a 2-stranded tangle inside a 3-ball, if one obtains an immersed unobstructed Lagrangian inside the pillowcase, one can further associate an $A_\infty$ module to that Lagrangian.
\end{abstract}

\maketitle

\section{Introduction}
	\subsection{Background}
		Since the groundbreaking work of Donaldson, gauge theory became a powerful tool to study low-dimensional topology. Gauge theory provides invariants of diffeomorphism types of 4-manifolds, homological invariants of 3-manifolds, and homological invariants of knots inside 3-manifolds. These invariants are defined using moduli spaces of solutions to certain partial differential equations on manifolds. The following two theories emerged over the years: instanton theory, where one counts solutions to anti-self-dual Yang-Mills equations, and Seiberg-Witten theory, where one counts solutions to Seiberg-Witten monopole equations. The corresponding invariants of 3-manifolds are called instanton Floer homology \cite{Flo-ins}, introduced by Floer, and monopole Floer homology \cite{KM}, introduced by Kronheimer and Mrowka.

		Another way to construct homological invariants for 3-manifolds and knots is to use symplectic geometry. The general strategy for 3-manifolds is as follows: one takes a Heegaard splitting $U_1\cup_{\Sigma_g} U_2 = Y^3$, and associates to it two Lagrangians inside a certain moduli space (which should possess a natural symplectic structure): $L(U_1), L(U_2) \rightarrow (M(\Sigma_g),\omega)$. Then the desired Floer homology is defined via Lagrangian Floer homology $HF(L(U_1),L(U_2))$. The first homological invariant of this type is called Heegaard Floer homology, and was introduced by Ozsváth and Szabó \cite{OS-main-1}, \cite{OS-main-2}. See a recent survey article \cite{Juh} for an introduction and numerous applications of this theory.

		It is interesting that these two methods to obtain Floer homologies run in parallel, connected by Atiyah-Floer type conjectures. The following is the original formulation for instantons. Having a Heegaard splitting $Y^3=U_1 \cup_{\Sigma_g} U_2$, associate to $U_i$ and $\Sigma_g$ their $SU(2)$ representation varieties $R(U_i)$, $R(\Sigma_g)$. One then has maps $R(U_i) \rightarrow R(\Sigma_g)$. It was conjectured in \cite{Atiyah} that instanton Floer homology $I(Y^3)$ should be equal to the Lagrangian Floer homology $HF(R(U_1),R(U_2))$. Spaces $R(U_1),R(U_2),R(\Sigma_g)$ are singular, and thus symplectic instanton Floer homology $HF(R(U_1),R(U_2))$ was not possible to define at that moment. The symplectic side of the isomorphism, as well as the proof of Atiyah-Floer conjecture, are still under development. There are different versions of symplectic instanton Floer homology, which should correspond to different versions of instanton Floer homology. Notably, the corresponding conjecture on the monopole side was proved in \cite{KLT} and \cite{CGH}: monopole Floer homology and Heegaard Floer homology are equal.

		For a thorough introduction to the above Floer-theoretic invariants, connections between them, and their applications see \cite{Manol}, and references therein. See also more recent papers \cite{Caz}, \cite{Hor-1}, \cite{Hor-2} on symplectic instanton Floer homology, and \cite{DF} on Atiyah-Floer conjecture.

		Now we turn our attention to knot invariants. The first Floer-theoretic invariant for knots inside 3-manifolds was knot Floer homology $\widehat{HFK}(Y^3,K)$, introduced by Ozsváth and Szabó in \cite{OS-hfk-0} using symplectic geometry. The special case of knots in a sphere is denoted by $\widehat{HFK}(K)=\widehat{HFK}(S^3,K)$. Some properties and applications of knot Floer homology are the following: $\widehat{HFK}(K)$ categorifies the Alexander polynomial, detects the 3-dimensional genus of a knot and hence detects the unknot, detects fiberedness of a knot in $S^3$, and also provides lower bounds for the 4-ball genus of a knot. See \cite{Manol-hfk} and \cite{OS-hfk-1} for an introduction to this invariant. A gauge-theoretic counterpart of knot Floer homology was constructed in \cite{KM-sut} and \cite{Kut}. 

		On the instanton side, only gauge theoretic constructions of knot invariants are fully developed. Floer in \cite{Flo-ins-knots} and Kronheimer and Mrowka in \cite{KM-sut} constructed a knot invariant called sutured instanton knot homology $KHI(K)$. It has properties similar to knot Floer homology, like detecting the genus of a knot (non-vanishing result). In fact, $KHI(K)$ is conjectured to be isomorphic to knot Floer homology $\widehat{HFK}(K)$ (both of them categorify the Alexander polynomial of a knot). In \cite{KM-knot}, \cite{KM-Kh} Kronheimer and Mrowka constructed another knot invariant called singular instanton knot homology, which is denoted by $I^\natural(K)$. In \cite{KM-Kh} they proved that there is a spectral sequence from Khovanov homology $Kh(K)$ to $I^\natural(K)$, and that $I^\natural(K)$ is, in fact, isomorphic to $KHI(K)$. This, together with the non-vanishing result for $KHI(K)$, proved that Khovanov homology detects the unknot.

		In order to better understand and compute $I^{\natural}(K)$, Hedden, Herald, and Kirk in \cite{HHK1} and \cite{HHK2} developed a certain geometric construction. The outcome of their construction is an $\F_2$-vector space $H_{pil}(K,\S,\pi)$, which is called {\it pillowcase homology}. Despite of the fact that $H_{pil}(K,\S,\pi)$ depends not only on the knot $K$, but also on the choices of a Conway sphere $\S$ and a certain perturbation data $\pi$, the authors in \cite{HHK2} conjectured and provided lots of evidence that $H_{pil}(K,\S,\pi)$ is in fact a knot invariant. They also conjectured that $H_{pil}(K,\S,\pi)$ should be the symplectic side of Atiyah-Floer conjecture for singular instanton knot homology $I^{\natural}(K)$, see \cite[Conjecture 6.5]{HHK2}.

		Our primary motivation was to enhance the construction of pillowcase homology. We do it by associating algebraic invariants not only to knots, but also to tangles. Let us first describe pillowcase homology in more detail.
	
	\subsection{Pillowcase homology}
		We sketch how the pillowcase homology construction works in the first two  columns of Figure~\ref{figure:strategy}.

		\begin{figure}[!ht]
		\centering
		\includegraphics[width=1.2\textwidth]{./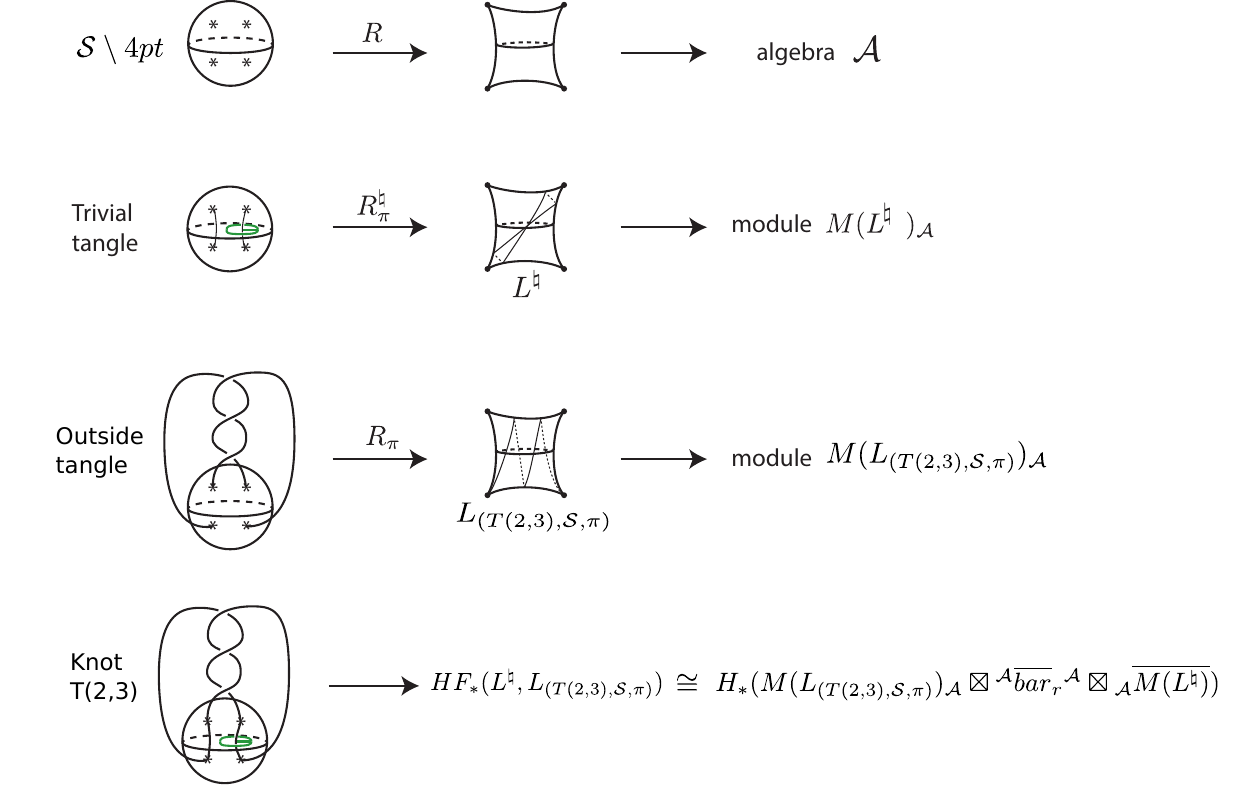}
		\caption{The pillowcase homology construction (the first and the second columns), and its algebraic extension (the 3rd column).}
		\label{figure:strategy}
		\end{figure}

		We now describe the construction. Having a knot in $K \subset S^3$, one first finds a Conway sphere, i.e. a 2-sphere that intersects the knot in four points. We denote it by $\S$, or $(\S,4)$ if we want to emphasize the four points $\S \cap K$. The decomposition of a knot into two tangles by this 2-sphere should be such that one of the tangles is a trivial tangle consisting of two arcs  $A_1,A_2$. Then the steps are as follows:
		\begin{enumerate}[label=(\arabic*)]
			\item To that Conway sphere with four marked points (denote by $\gamma_i, \ i=1,2,3,4,$ loops around those points) one associates a traceless character variety:
			$$R(\S, 4)=\{h \in hom(\pi_1(\S\setminus 4pt),SU(2)) \ | \  tr(h(\gamma_i))=0 \}/conj.$$
			It turns out that this character variety is equal (as an orbifold) to the pillowcase --- a torus quotiented by the hyperelliptic involution $$R(\S, 4)\cong P=S^1\times S^1/ ((\gamma,\theta) \sim (-\gamma,-\theta)),$$ see \cite[Proposition 3.1]{HHK1} for the proof.

			\item To the trivial tangle, which consists of two arcs $A_1,A_2$, one associates an immersed curve $L^{\natural}$ in the pillowcase by the following procedure. First, one adds a circle $H$ with an arc $W$ to the tangle, as shown on the left of the second row of Figure~\ref{figure:strategy}. Then one forms a space of traceless representations:
			\begin{align*}
			R^{\natural}(D^3,A_1 \cup A_2)= \{h \in hom(\pi_1(D^3 \setminus (A_1 \cup A_2 \cup H \cup W)),SU(2)) \ | \\
			| \  tr(h(\mu_{A_i}))=tr(h(\mu_{H}))=0, h(\mu_{W})=-I \  \}/conj.
			\end{align*}

			Because $\S \setminus 4pt \subset D^3 \setminus (A_1 \cup A_2 \cup H \cup W)$, there is a map in the reversed direction $R^{\natural}(D^3,A_1 \cup A_2) \rightarrow R(\S, 4)$. Because this map is singular, and $R^{\natural}(D^3,A_1 \cup A_2)$ is not 1-dimensional, one needs to do a holonomy perturbation of the space. After specifically defined perturbation (see \cite[Section 7]{HHK1}), one gets an immersed circle $L^{\natural}:R^{\natural}_\pi(D^3,A_1 \cup A_2) \looparrowright P$ depicted on the left of Figure~\ref{figure:curve}, missing all four singular points.

			\item To the tangle $ K\setminus (A_1 \cup A_2)$ from the other side one applies almost the same procedure. The only difference is that the circle and the arc $H\cup W$ are not added (this is why here the image will often pass through a singular point). One still needs to perturb $R(D^3, K \setminus (A_1 \cup A_2))$ in this case (see, for example, \cite[Section 11.6]{HHK2} for the case of the (3,4) torus knot). This results in the immersion $L_{(K,\S,\pi)}:R_\pi(D^3, K \setminus (A_1 \cup A_2)) \looparrowright P$. Examples of such immersions for torus knots (with two arcs removed) are depicted in Figure~\ref{figure:curves}.

			\item Having done all that, one associates to the initial knot $K$ a vector space called pillowcase homology. It is equal to the Lagrangian Floer homology $H_{pil}(K,\S,\pi)=HF_*(L^{\natural}, L_{(K,\S,\pi)})$ inside $\P$, where $\P$ is the pillowcase with deleted small neighborhoods of four singular points \footnote{One actually obtains Lagrangians in $P$ and should consider Floer homology where discs do not cross singular points. But one can delete small neighborhoods of singular points to get $\P$, and the corresponding Lagrangian Floer complex will be unchanged.}, see Figure~\ref{figure:pillowcase}.

			\begin{figure}[!ht]
			\centering
			\includegraphics[width=0.4\textwidth]{./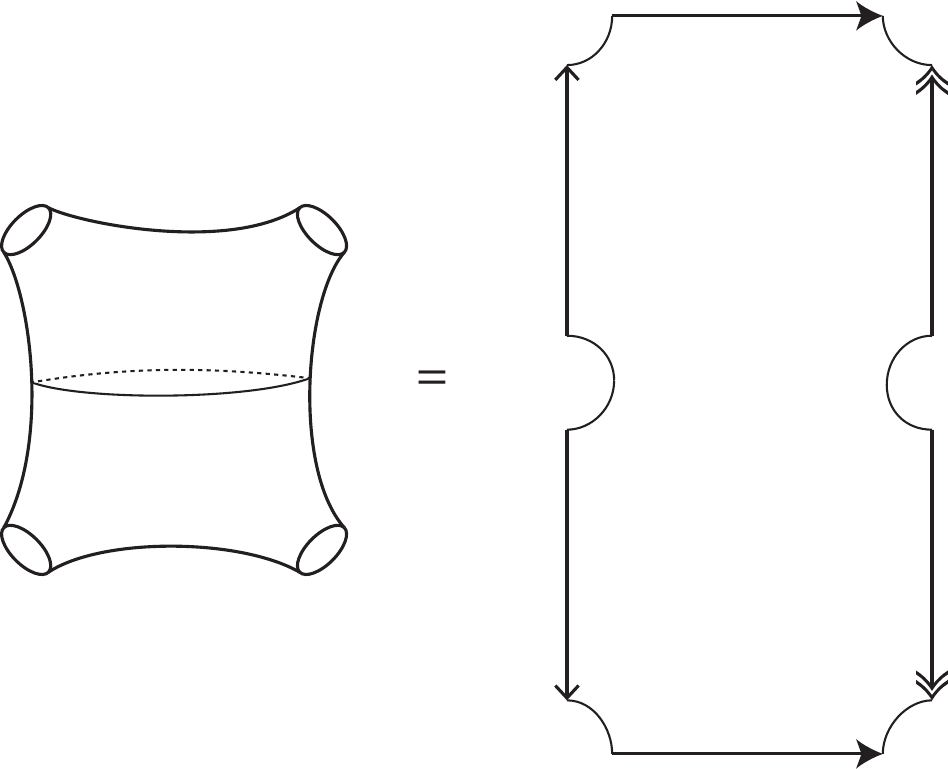}
			\caption{Pillowcase $\P$.}
			\label{figure:pillowcase}
			\end{figure}

		\end{enumerate}

		On the level of chain complexes, the vector space isomorphism $C_{pil}(K,\S,\pi)\cong CI^{\natural}(K)$ is true by construction. In \cite{HHK2} the authors provided lots of examples where the homologies of these chain complexes are indeed isomorphic.
		
		\begin{remark}
		Let us note that it seems hard to prove directly that $H_{pil}(K,\S,\pi)$ is, in fact, a knot invariant. It is unclear how to control the dependence on the Conway sphere $\S$ and the perturbation data $\pi$.

		Also, strictly speaking $H_{pil}(K,\S,\pi)$ was not well defined: apriori it is not clear why the immersion $L_{(K,\S,\pi)}$ is unobstructed and admissible with respect to $L^{\natural}$, in order for Lagrangian Floer homology to be defined without difficulties. This difficulty can be easily resolved, though, as one can homotope $L_{(K,\S,\pi)}$ and make it unobstructed and admissible with respect to $L^{\natural}$.
		\end{remark}

	\subsection{The bordered construction and motivation}
		The construction in this paper is an algebraic enhancement of pillowcase homology. It answers the following question: what algebraic structures one should associate to $L^{\natural}$ and $L_{(K,\S,\pi)}$, in order to be able to recover $H_{pil}(K,\S,\pi)=HF_*(L^{\natural},L_{(K,\S,\pi)})$ algebraically, without looking at the intersection picture on the pillowcase. The relevant objects can be seen in the third column of Figure~\ref{figure:strategy}. Namely, to the Conway sphere $\S$ we have associated a pillowcase $\P$, and now further associate an algebra $\A$. To the trivial 2-stranded tangle we associated the immersed circle $L^{\natural}$ in $\P$, and now further associate a specific module $M(L^{\natural})_\A$, see Figure~\ref{figure:module}. To the tangle from the other side $K \setminus (A_1 \cup A_2)$, similarly already having $L_{(K,\S,\pi)}$, we associate a module $M(L_{(K,\S,\pi)})_\A$. To a union of these two tangles we associate a homology  $H_*(M(L_{(K,\S,\pi)})_{\A} \boxtimes \dualSmallBarRed \boxtimes {}_{\A}\ov{M(L^{\natural})})$. The fact that this algebraic pairing is equal to the pillowcase homology $H_{pil}(K,\S,\pi)$ is the main result of this paper. In the next subsection we formulate a slightly more general result, where we consider any two Lagrangian immersions.

		Let us describe the motivation behind the bordered construction.

		First, it provides a natural candidate for an algebraic invariant of a 2-stranded tangle $T$ inside a ball $D^3$. To such a tangle one can associate an immersed Lagrangian $L(T,\pi):R_\pi(D^3, T) \looparrowright R(\bdry D^3, 4)=P$, and then an $A_\infty$ module $M(L(T,\pi))_\A$ 
		\footnote{Here one must be careful. Definition of $M(L(T,\pi))_\A$ requires a parameterization of the pillowcase $R(\bdry D^3, 4)$. Thus there needs to be additional information, for this parameterization to be fixed. Namely, the boundary of the tangle $(\bdry D^3, 4)$ must be bordered, i.e. parameterized by a standard fixed $(S^2,4)$.}. 
		As with pillowcase homology, there are missing ingredients in this construction: it needs to be proved that the homotopy type of $M(L(T,\pi))_\A$ does not depend on the perturbation $\pi$. 

		Building on this idea, one can isolate the part of $H_{pil}(K,\S,\pi)$ which depends on $L_{(K,\S,\pi)}$, i.e., if one changes $L_{(K,\S,\pi)}$ in some way, it is more natural to understand how $M(L_{(K,\S,\pi)})_\A$ changes, rather than $H_{pil}(K,\S,\pi)=H_*(M(L_{(K,\S,\pi)})_{\A} \boxtimes \dualSmallBarRed \boxtimes {}_{\A}\ov{M(L^{\natural})})$. 

		A very interesting direction of research is to further develop the bordered theory for pillowcase homology $H_{pil}(K,\S,\pi)$ into full bordered theory. Let us briefly describe the way that such a theory would work. The strategy is the following: 
		\begin{enumerate}[label=(\arabic*)]
			\item To understand what algebra should be associated to a $2n$ punctured sphere $(S^{2},2n)$.
			\item To understand what bimodules (over the algebras from the previous step) correspond to tangles inside $S^2 \times I$, which connect $(S^2,2k)$ to $(S^2,2(k+1))$.
			\item To build up a chain complex $C_{alg}(K)$ and prove that its homology $H_{alg}(K)$ is a knot invariant. The construction of $C_{alg}(K)$ should involve composing (via derived tensor product, or morphism space pairing) bimodules from the second step, and modules that correspond to the trivial tangles $M(L_U), M(L^\natural)$ (Examples~\ref{example:triv_tangle},~\ref{example:unknot}).
			\item To prove that this construction, in fact, computes singular instanton knot homology: $H_{alg}(K)\cong I^{\natural}(K)$.
		\end{enumerate}

		This is a difficult project. Even completing the step (1) is hard. The desired algebra should be the algebra of the Fukaya category of the smooth stratum of the representation variety $R(S^2,2n)$. After the pillowcase $R(S^2,4)$, the next space of interest is $R(S^2,6)$. It is already a complicated singular 6-dimensional manifold, see \cite{Kirk}. See also \cite{HK} for the study of $R(S^2,2n)$. Let us note that additional structures on representation spaces
		could help to compute their Fukaya category. For example, in case of Heegaard Floer homology, the Fukaya category of $Sym^g(\Sigma_g \setminus 1pt)$ was computed in \cite{Aur1} using the structure of a Lefschetz fibration over $\C$.

		Nevertheless, if one manages to guess the algebras and bimodules, one can dismiss the underlying geometry and try to prove that the knot invariant is well defined algebraically (step (3)).

		Examples of analogous bordered theories developed for other invariants are: bordered Heegaard Floer homology \cite{LOT-main}, \cite{LOT-bim}; bordered theory for knot Floer homology \cite{OS-hfk-1}, \cite{OS-hfk-2}, \cite{OS-hfk-3}; bordered theories for Khovanov homology \cite{Rob-1}, \cite{Rob-2}, \cite{Manion}. Step (3) for Heegaard Floer homology was done in \cite{Boh}, and for knot Floer homology in \cite{OS-hfk-2}, \cite{OS-hfk-3}.

		Let us mention that there is a related work by Zibrowius \cite{pqMod}: he associates a different algebra to a 4-punctured 2-sphere and modules to 2-stranded tangles, and has a pairing theorem resulting in knot Floer homology.

	\subsection{Main result}
		We construct an algebraic version of Lagrangian Floer homology for two immersed curves inside the pillowcase $\P$. The construction works as follows. To the pillowcase $\P$ we associate an algebra $\A$. To an immersed curve (circle or arc with ends on the boundary) $L$ inside $\P$ we associate an $A_\infty$ module $M(L)_\A$. Then we prove the following pairing result:
		\begin{theorem*}
		Let $L_0, L_1$ be two admissible unobstructed curves in the pillowcase $\P$. Then their Lagrangian Floer complex is homotopy equivalent to the following algebraic pairing of curves: 
		$$CF_*(L_0,L_1) \simeq M(L_1)_{\A} \boxtimes \dualSmallBarRed \boxtimes {}_{\A}\ov{M(L_0)}.$$
		\end{theorem*}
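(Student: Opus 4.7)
My plan is to model the proof on the pairing theorems in bordered Heegaard Floer homology (Lipshitz--Ozsváth--Thurston) and in Fukaya-categorical setups, where the algebra $\A$ plays the role of an endomorphism algebra of a collection of reference objects that generate the relevant piece of the Fukaya category of $\P$. I would fix a set of reference arcs in $\P$ (the "alpha arcs") whose Ext-algebra is $\A$, and realize $M(L)_\A$ as the $A_\infty$-module that records intersection points of $L$ with the reference arcs together with counts of holomorphic polygons with one boundary arc on $L$ and remaining arcs on the reference system. Dually, $\ov{M(L)}$ should be the same data viewed with opposite module structure; the reduced bar bimodule $\dualSmallBarRed$ then provides a model for the derived tensor product over $\A$, so that the right-hand side computes $M(L_1)\otimes^L_\A \ov{M(L_0)}$ in a concrete complex.

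The geometric core of the proof is a neck-stretching (or homotopy colimit) argument. I would stretch the almost complex structure along a neighborhood of the reference arcs and analyze the SFT-style degeneration of holomorphic strips counted by $CF_*(L_0,L_1)$. The expected picture is that a strip breaks into (i) a disk with boundary alternating between $L_1$ and a sequence of reference arcs, (ii) a finite sequence of strips between consecutive reference arcs, and (iii) a disk with boundary alternating between $L_0$ and a sequence of reference arcs. The three kinds of pieces record, respectively, an operation of $M(L_1)_\A$, an element of the reduced bar $\dualSmallBarRed$, and an operation of $\ov{M(L_0)}$. The compatibility between the composition map and the differential on the pairing complex should be formal given the $A_\infty$ axioms once the degeneration and gluing are established.

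The two intermediate steps I would carry out carefully are: first, a count-by-count identification, for a very restricted ``standard'' class of curves, showing that the pieces produced by the degeneration match literally generators and operations in $M(L_i)_\A$; second, a gluing/transversality theorem reversing the degeneration to show every matched combinatorial object lifts uniquely to a holomorphic strip for the unstretched complex structure. These two together give a chain isomorphism between a model of the left-hand side and the bar-resolved right-hand side, which then descends to a homotopy equivalence.

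The main obstacle will be managing the degeneration in the presence of immersed Lagrangians. Self-intersections of $L_0$ or $L_1$ can produce disk bubbles, teardrop bubbles at the immersion points, and sphere/disk bubbles at the reference arcs during neck-stretching; controlling them is exactly where the admissibility and unobstructedness hypotheses must be used, and where the proof has to be most careful. A secondary difficulty is achieving transversality for all the broken configurations simultaneously, which typically requires an inductive, curved $A_\infty$ bookkeeping of the bounding chains for the immersed curves. Once the degeneration is shown to produce only the three prescribed kinds of pieces, matching with the algebraic pairing $M(L_1)_{\A}\boxtimes \dualSmallBarRed\boxtimes {}_\A\ov{M(L_0)}$ is then a direct comparison of $A_\infty$-structure constants.
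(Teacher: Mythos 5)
Your proposal takes a genuinely different route from the paper's. You propose a neck-stretching/SFT degeneration argument along the reference arcs, with gluing and transversality theorems and a careful accounting of disk and teardrop bubbling at the immersion points of $L_0$ and $L_1$. This is the style of proof used for the pairing theorem in bordered Heegaard Floer homology and in Auroux's Fukaya-categorical reinterpretation, and it would in principle prove the statement, but it requires substantial analytical input which is not actually needed here.

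The paper instead exploits the fact that $\P$ is a surface, so Lagrangian Floer theory is combinatorial. Rather than degenerating the almost complex structure, the paper first isotopes $L_0$ and $L_1$ into a standard normal form relative to the parameterizing arcs: $L_0$ runs through the \say{centers} of the big domains $B_k$ and crosses each red arc near its midpoint, while $L_1$ stays in a small neighborhood of $\bdry\P\cup\{\text{red arcs}\}$, executing a $360^\circ$ turn inside each big domain. After this isotopy (and a check of admissibility, which uses the lift to the cover from the annulus lemma), the generators of $CF_*(L_0,L_1)$ are in bijection with generators of $M(L_1)_\A\boxtimes\dualSmallBarRed\boxtimes{}_\A\ov{M(L_0)}$, both described by paths along $\bdry B_k$. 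The key observation is then a localization lemma: every immersed disc contributing to the differential is confined to a small neighborhood of a single big domain, because it cannot pass an intersection point of the form $q\boxtimes b(i)\boxtimes k^*$ without failing to be immersed. After that, differentials on both sides are matched domain-by-domain, with the two types of box-product differentials (outgoing algebra elements to the left versus to the right in $\dualSmallBarRed$) corresponding to discs passing through the $360^\circ$ turn of $L_1$ versus discs passing through the center of the domain. The outcome is an \emph{isomorphism} of chain complexes, strictly stronger than the homotopy equivalence your approach would give. In short: your plan is sound but overengineered for this 2-dimensional setting, and it would force you to confront bubbling and transversality issues that the paper sidesteps entirely by choosing the right normal form for the curves and reading off the complex combinatorially.
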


		See Definitions~\ref{def:admissible},~\ref{def:unobstructred} for terms \say{admissible} and \say{unobstructed}; $_{\A}\ov{M(L_0)})$ denotes a dual module;  $\dualSmallBarRed$ is a specific type DD structure, constructed in such a way, that the above homotopy equivalence is true. Another explanation for why we had to work with  $\dualSmallBarRed$  is that, in order to obtain a chain complex, $A_\infty$ modules are usually paired with type D structures, and so it was convenient for us to insert a type DD structure $\dualSmallBarRed$ between the modules. 

		From the above homotopy equivalence it follows that 
		$$HF_*(L_0,L_1) \cong H_*(M(L_1)_{\A} \boxtimes \dualSmallBarRed \boxtimes {}_{\A}\ov{M(L_0)}).$$

		Let us mention that this construction of algebraic Lagrangian Floer homology can be generalized to any oriented surface with boundary $\Sigma$. In order for the process to be analogous, one has to make sure to put enough basepoints on $\bdry \Sigma$ and parameterizing arcs on $\Sigma$, so that the algebra $\A$ becomes directed, i.e. there are no cycles in the generating graph $\Gamma$. Though it is not absolutely necessary --- in \cite{HRW} the authors work out the case of torus with boundary, without requiring the algebra to be directed. There the process is actually reversed: they start with a D-structure (or A-module), and from that they obtain an immersed curve.

		Additionally, the algebraic pairing $H_*(M(L_1)_{\A} \boxtimes \dualSmallBarRed \boxtimes {}_{\A}\ov{M(L_0)})$ gives an algorithm for computing the geometric (i.e. minimal) intersection number of two curves, and the geometric self-intersection number of one curve, on a surface with boundary \footnote{One should treat the case of $Per(L_0,L_1)=\Z$ separately, and subtract 2 from $rk(H_*)$ in order to obtain the geometric intersection number.}.

	\subsection{The underlying structures}\label{sec:reasons}
		The main object behind the scene is the partially wrapped Fukaya category. This special flavor of the Fukaya category was introduced by Auroux in \cite{Aur1}, \cite{Aur2}, in order to reinterpret bordered Heegaard Floer homology via symplectic geometry. From this point of view the main result of this paper can be reformulated as a computation of the enlargement by immersed Lagrangians of the partially wrapped Fukaya category of $\P$. See \cite[Section 3]{Aur3} for the general description of the following process, i.e. what does it mean to generate a category, and what is the Yoneda embedding. 

		Consider the partially wrapped Fukaya category $\mathcal{F}_{pw}(\P)$, where the stops are the basepoints $z_1,z_2,z_3,z_4$ on the left of Figure~\ref{figure:parameterization--algebra}. Note that, because Auroux was considering cohomology instead of homology, we have $CF_*(L_0,L_1)=hom_{\mathcal{F}_{pw}(\P)}(L_1,L_0)$. The parameterization of $\P$ by the red arcs in Section~\ref{sec:param} corresponds to picking a set of Lagrangians $L_1=i_0,\ldots,L_6=j_2 \in \mathcal{F}_{pw}(\P)$. The algebra $\A$, which we define in Section~\ref{sec:algebra}, is the $A_\infty$ algebra $\bigoplus\limits_{i,j} hom_{\mathcal{F}_{pw}(\P)}(L_i,L_j) = \bigoplus\limits_{i,j} CF_*(L_j,L_i)$.

		In Section~\ref{sec:curve->module}, to an immersed curve (circle or arc with ends on the boundary) $L$ inside $\P$ we associate an $A_\infty$ module $M(L)_\A$. It is secretly a module $\bigoplus\limits_{i} hom_{\mathcal{F}_{pw}(\P)}(L,L_i)= \bigoplus\limits_{i} CF_*(L_i,L)$, the image of $L$ under the Yoneda embedding $\mathcal{F}_{pw}(\P) \rightarrow mod_\A$. We do not define it this way, because the partially wrapped Fukaya category was not defined using immersed Lagrangians.

		Then, by \cite[Theorem 1]{Aur2}, we know that $L_1,\ldots,L_6$ generate the category $\mathcal{F}_{pw}(\P)$, which consists of {\it embedded }Lagrangians. This implies that if $L_0,L_1$ are embedded Lagrangians, then we have $HF_*(L_0,L_1)=H_*(hom_{\mathcal{F}_{pw}(\P)}(L_1,L_0)) \cong H_*(Mor_{mod_{\A}}(M(L_0),M(L_1)))$. What we want is to extend this result to $immersed$ Lagrangians, which were not part of the Fukaya category. We also want the algebraic part of the isomorphism to be easily computable, in the light of the morphism spaces of $A_\infty$ modules being often infinitely generated. 

		Instead of extending the notion of the partially wrapped Fukaya category to immersed Lagrangians (although for surfaces this is entirely possible), and then proving that  $L_1,\ldots,L_6$ still generate it, we choose a different method. %
		We first note that the morphism complex can be described in the following way via the bar resolution, see \cite[Proposition 2.10]{LOT-mor}: $Mor_{mod_{\A}}(M(L_0),M(L_1))\cong M(L_1) \boxtimes_\A \ov{Bar_r(\A)} \boxtimes_\A \ov{M(L_0)} $. In Section~\ref{sec:DD}, we describe (following \cite{LOT-mor}) a smaller model for the dual bar resolution, $\dualSmallBar$. Although we do not explicitly prove it, this DD bimodule is homotopy equivalent to $\ov{Bar_r(\A)}$, just as in the case of bordered algebra, see \cite[Proposition 5.13]{LOT-mor}. We then describe explicitly the reduced version of dual small bar resolution $\dualSmallBarRed$.

		Suppose we have two immersed curves $L_0,L_1$ in the pillowcase $\P$. \cite[Theorem 1]{Aur2} suggests that $HF_*(L_0,L_1) = hom^*(L_1,L_0)\cong H_*(Mor(M(L_0),M(L_1)))$. The way we constructed $\dualSmallBarRed$ suggests, that 
		$H_*(Mor(M(L_0),M(L_1))) \cong  H_*(M(L_1)_\A \boxtimes \dualSmallBarRed \boxtimes_\A \ov{M(L_0)})$. We now dismiss the morphism complex, and prove in Section~\ref{sec:thm} directly that $HF_*(L_0,L_1) \cong H_*(M(L_1)_\A \boxtimes \dualSmallBarRed \boxtimes_\A \ov{M(L_0)})$, by interpreting $\dualSmallBarRed$ in a geometric way.

	\begin{any}{Conventions, assumptions, prerequisites}
	We will work over $\F_2$, and we will work with Lagrangian Floer homology, as opposed to cohomology.

	By differential we will mean not only the map $d:C\rightarrow C$, such that $d^2=0$, but also the following. If, for example, $d(x)=y_1+y_2+y_3$, then we say that there is a differential from $x$ to $y_1$ (and from $x$ to $y_2$, and from $x$ to $y_3$). We will denote these differentials by arrows: $x\rightarrow y_1$.

	In this paper we will use $dg$-algebras, $A_\infty$ modules, type DD structures (DD bimodules), and box tensor product $\boxtimes$ operation. For definitions of these objects and operations we refer to \cite{LOT-main} and \cite{LOT-bim}.
	\end{any}

	\begin{any}{Acknowledgments}
	I am thankful to my adviser Zoltán Szabó for suggesting the project, and his continuous support. 
	\end{any}

\Needspace{8\baselineskip}
\section{Immersed curves in the pillowcase: setup} \label{sec:setup}
	\subsection{Pillowcase}
		Fix an oriented torus $T^2=S^1 \times S^1=\R / (2 \pi \cdot \Z^2)$ as a product of two unit circles. The pillowcase is a quotient of the torus by the hyperelliptic involution 
		$$P=T^2/ \ (\gamma,\theta) \sim (-\gamma,-\theta).$$ 
		This quotient has four singular points (which are cones over $\R P^1$), we call them corners. The intersection theory we are interested in happens in the compliment of the corners, or, equivalently, in the compliment of small neighborhoods of the corners. Thus we delete small neighborhoods of the corners and denote the result by 
		$$\P=P \setminus \ U(0,0) \cup U(0,\pi) \cup U(\pi,0) \cup U(\pi,\pi).$$ 
		We will be working with this space from now on (also calling it a pillowcase). Note that it is diffeomorphic to a $2$-sphere with four discs removed.

	\subsection{Immersed curves}
		By a curve $L$ we mean a circle or an arc in the pillowcase: $L:S^1 \looparrowright \P$ or $L:[0,1] \looparrowright \P$. Later we will often write $L$ instead of $Im(L)$ inside $\P$. 

		\begin{definition}\label{def:unobstructred}
		A curve $L$ is called {\it unobstructed} if it is an image of an embedded arc (if $L$ is an arc) or properly embedded line (if $L$ is a circle) in the universal cover of $\P$.
		\end{definition}
		
		If the curve is smooth and properly immersed, the above definition of an unobstructed curve is equivalent (see \cite[Lemma 2.2]{Abouzaid}) to saying that there is no fishtail (see Figure~\ref{figure:fishtail}), and $L$ is not null-homotopic. 
		\begin{figure}[!ht]
			\centering
			\includegraphics[width=0.1\textwidth]{./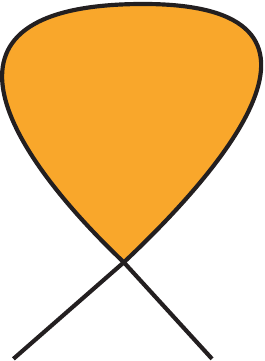}
			\caption{Fishtail.}
			\label{figure:fishtail}
		\end{figure}

		\begin{numbered_any}{Assumption}
		From now on any curve $L$ will be assumed to satisfy the following properties:
		\begin{itemize}
			\item $L$ is smoothly immersed, i.e. the differential is injective. This implies that locally $L$ is an embedding.

			\item If $L$ is a circle, it is contained in the interior $int(\P$). If $L$ is an arc, only the endpoints of it are mapped to the boundary $\partial \P$. Also the endpoints of $L$ should be distinct on $\partial \P$, and transverse to the boundary.

			\item All self-intersections of $L$ are transverse, and there are no triple self-intersections.

			\item $L$ is unobstructed.
		\end{itemize}
		We will call such curves either \say{unobstructed curves}, or simply \say{curves}.
		\end{numbered_any}

		\begin{numbered_any}{Assumption}\label{assum:no_swithches}
		Throughout the paper, when we state different Lagrangian boundary conditions, we will always assume that the maps can be precomposed with the corresponding Lagrangians. More precisely, consider a curve $L$, and consider a map from a surface with boundary into the pillowcase $f:(F,\bdry F)\rightarrow (\P,Im(L))$. We require that this map can be precomposed with $L$ when restricted to the boundary: $f|_{\bdry F}=L \circ g: \bdry F  \rightarrow \P$ for some $g:\bdry F \rightarrow Dom(L)$. We will denote such maps by $f:(F,\bdry F)\rightarrow (\P,L)$.

		An example of a disc $f:(D,\bdry D)\rightarrow (\P,Im(L))$ which does not meet the requirement above is in Figure~\ref{figure:fishtail}.
		\end{numbered_any}
		
		Lagrangian Floer homology is a homology theory for a pair of curves, denoted by $HF(L_0,L_1)$. In order for this to be well defined, we will need pairs to satisfy certain properties.
		\begin{definition}\label{def:admissible}
		We call a pair of curves $(L_0,L_1)$ {\it admissible} if the following properties are satisfied:
		\begin{itemize}
			\item All intersection points in $L_0 \cap L_1$ are transverse. 
			\item There are no triple intersection points.
			\item If both curves $(L_0,L_1)$ are arcs then the following condition must be satisfied. First of all, our pillowcase will be equipped with four basepoints on every boundary component as on the left of Figure~\ref{figure:parameterization--algebra}. Suppose now $a \in \bdry L_0$, $b \in \bdry L_1$, and $a$ and $b$ lie in the same component of $\bdry \P$ with a basepoint $z_i$. Then, with respect to the natural orientation of $\bdry P$, the order of the three points should be first $a$, then $b$, then $z_i$. For example, on the left of Figure~\ref{figure:parameterization--algebra}, the pair $(i_0, i_1)$ is admissible, but the pair $(j_1, j_2)$ is not.
			\item There is no essential immersed annulus with boundary on $L_0$ and $L_1$
			\[A:(S^1\times [0,1],S^1\times\{0\},S^1\times\{1\}) \rightarrow (\P,L_0,L_1).\]
		\end{itemize}
		\end{definition}
		We will not assume that admissibility is always satisfied. Instead we will isotope the curves in order to make them admissible.
\Needspace{8\baselineskip}
\section{Geometric pairing}
	\subsection{Outline}
	Our first goal is to define Lagrangian Floer homology $HF(L_0,L_1)$ for a pair of immersed unobstructed curves $L_0,L_1$ in the pillowcase $\P$. We sketch here the construction, following \cite{HRW}, \cite{Abouzaid}, \cite{SRS}, \cite{HHK2}. The plan is the following:
	\begin{enumerate}[label=(\arabic*)]
		\item For the homology to be well defined we need to restrict the class of curves we consider --- the appropriate class for us are admissible pairs of unobstructed curves (the same setup as in \cite{HRW}), see Definitions~\ref{def:admissible},~\ref{def:unobstructred}. Thus, having two  unobstructed curves $(L_0,L_1)$, we need to know how to isotope $L_0$ to $L'_0$ so that $(L'_0,L_1)$ is admissible.
		
		\item A chain complex $CF(L'_0,L_1)$ is generated over $\F_2$ by intersection points $L'_0 \cap L_1$. The differential $\partial: CF(L'_0,L_1)\rightarrow CF(L'_0,L_1)$ is defined on generators as mod 2 sum
		$$\partial x = \sum_y  \mathcal M(x,y) \cdot y,$$
		where $\mathcal M(x,y)$ counts the number of immersed discs \footnote{Because at the domain of the map we have a disc with two right angles, sometimes these discs are called lunes.} from $x$ to $y$ in the pillowcase, with the right boundary on $L'_0$ and the left boundary on $L_1$, and convex angles at x and y, see Figure~\ref{figure:immersed_disc}. We require these discs to be orientation preserving, and count them up to reparameterizations, which are orientation preserving diffeomorphisms. The main difficulty in this step is to prove that $\mathcal M(x,y)$ is finite for any two generators $x,y$.

		\begin{figure}[!ht]
		\centering
		\includegraphics[width=0.7\textwidth]{./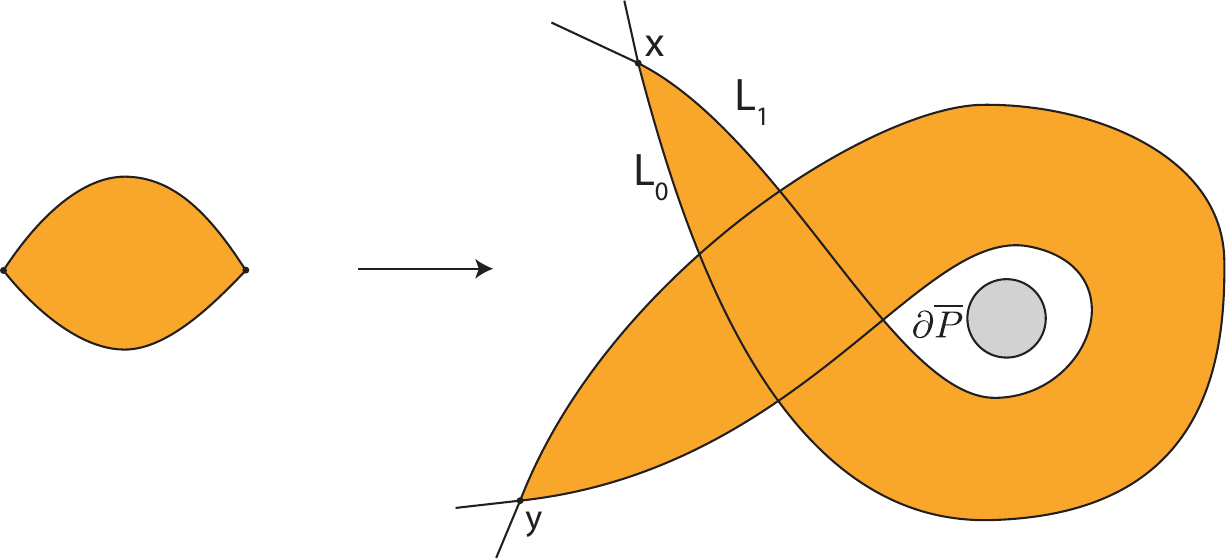}
		\caption{Immersed disc from $x$ to $y$. Note that there are no fishtails due to the presence of $\bdry \P$.}
		\label{figure:immersed_disc}
		\end{figure}

		\item We prove that $\partial^2=0$, and, more generally, that $A_\infty$ relations hold. Then the Lagrangian Floer homology is defined by $HF(L_0,L_1)=H_*(CF(L'_0,L_1),\partial)$. The correctness of the definition follows from the following two statements. 

		\item Suppose $L_0 \sim L_0'$ as basepoint free loops. If $(L_0,L_1)$ and $(L_0',L_1)$ are both admissible pairs, they can be connected through elementary isotopies (of both $L_0$ and $L_1$) called finger moves (see Figure~\ref{figure:finger_move}) such that admissibility does not break down on each step.

		\item If an admissible pair $(L_0,L_1)$ is connected to an admissible pair $(L_0',L_1)$ by a finger move, then $HF(L_0,L_1)=HF(L_0',L_1)$.

		\begin{remark}
		From these two steps it also follows that Lagrangian Floer homology is invariant with respect to isotopies (isotopies of arcs are considered relative to the endpoints).
		\end{remark}

	\end{enumerate}

	\subsection{More details}
	We will follow the plan, outlined in the previous section, giving more attention to the admissibility condition --- the only place where our setup is different from \cite[Sections 2, 3]{HHK2}.
	\begin{enumerate}[label=(\arabic*)]
		\item
			Here we need to show how to isotope $L_0$ to $L_0'$ so that $(L_0',L_1)$ is admissible. The only problematic part is to rule out immersed annuli. Let us first understand when essential annuli exist at all.

			\begin{definition}
			A {\it periodic map} is a smooth annulus $A:(S^1\times [0,1],S^1\times\{0\},S^1\times\{1\}) \rightarrow (\P,L_0,L_1)$. 
			\end{definition}
			Denote by $Per(L_0,L_1)$ the set of homotopy classes of periodic maps.
			\begin{lemma}
			$Per(L_0,L_1)= \Z \ or \ \{0\}$. Admissibility can break down, i.e. essential annuli can exist, only if $Per(L_0,L_1)= \Z$. This is equivalent to $p L_0 \sim q L_1$ as basepoint free loops for some co-prime integers $p$ and $q$ (in particular, both curves should be close immersed circles).
			\end{lemma}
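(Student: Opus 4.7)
The plan is to analyze a periodic map $A$ through its action on $\pi_1$, exploiting that $\pi_1(\P)$ is free of rank $3$ (since $\P$ is the $4$-holed sphere). Given $A:(S^1\times [0,1],\,S^1\times\{0\},\,S^1\times\{1\})\to(\P,L_0,L_1)$, write $w=A_*([S^1])\in\pi_1(\P)$, well-defined up to conjugation. Because the two boundary loops $L_i\circ f_i$ are freely homotopic to the core through $A$, each represents the class $w$; in particular, if $L_i$ is an arc then $w=1$ automatically (any loop in a contractible space is nullhomotopic), while if $L_i$ is a circle then $w$ is conjugate to $[L_i]^{d_i}$, where $d_i=\deg f_i$. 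This splits the analysis into two cases.

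When $w=1$, the map $A$ lifts to the universal cover $\widetilde{\P}$, a contractible planar surface, and the lift $\tilde A$ has boundaries inside $\widetilde{L}_0$ and $\widetilde{L}_1$. By unobstructedness these are properly embedded arcs or lines --- contractible $1$-submanifolds of $\widetilde{\P}$ --- so each boundary loop of $\tilde A$ can be homotoped to a constant within $\widetilde{L}_i$; the resulting sphere in $\widetilde{\P}$ is then null-homotopic, and this homotopy descends through periodic maps. Hence $A$ represents $0\in Per$. In particular, whenever one of the $L_i$ is an arc, $w=1$ is forced, so $Per(L_0,L_1)=\{0\}$ and no essential annulus exists in this subcase.

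Suppose instead $w\neq 1$, so both $L_i$ are circles. Two facts about $\pi_1(\P)=F_3$ come into play: (i) the centralizer of any nontrivial element is infinite cyclic, generated by a unique primitive root; and (ii) the classes $[L_0],[L_1]$ are themselves primitive, because the deck stabilizer of the embedded line $\widetilde{L}_i$ is precisely $\langle [L_i]\rangle$. Combined with $w\sim[L_0]^{d_0}\sim[L_1]^{d_1}$, this forces $[L_0]$ and $[L_1]$ to lie in a common infinite-cyclic subgroup, yielding a free homotopy $pL_0\sim qL_1$ with $(p,q)=(d_0,d_1)/\gcd(d_0,d_1)$ coprime. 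To classify periodic maps with this $w$, I pass to the cover $\P_w\to\P$ corresponding to $\langle w\rangle\cong\Z$; topologically an open annulus, it receives the lift $\tilde A$ of every such periodic map. Homotopy classes of annuli in an open annulus with boundary on two fixed primitive core curves are classified by a single integer --- the winding of the core of $\tilde A$ around the core of $\P_w$ --- giving $Per(L_0,L_1)\cong\Z$, and conversely a coprime relation $pL_0\sim qL_1$ directly produces a generator. Any nonzero class then represents an essential annulus, since its core represents $w\neq 1$.

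The main obstacle will be the classification in the annular cover: checking that the winding integer is well-defined on homotopy classes of \emph{periodic maps} (and not merely of maps of pairs $(S^1\times I,\partial)\to(\P,L_0\cup L_1)$), and that every integer is realised. This requires that the boundary reparametrizations $f_0,f_1$ are determined up to homotopy by the global winding, which uses primitivity of $[L_0],[L_1]$. A subsidiary point is verifying in the $w=1$ case that the null-homotopy of $\tilde A$ can be carried out through periodic maps; this is handled by first contracting each boundary loop inside the contractible lift $\widetilde{L}_i$ and then filling in by simple-connectedness of $\widetilde{\P}$.
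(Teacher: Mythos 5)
Your route---splitting on the conjugacy class of the core $w=A_*[S^1]$ and passing to cyclic covers---is genuinely different from the paper's argument. The paper instead works directly with fundamental groups: after introducing an intersection point of $L_0,L_1$ to serve as a common basepoint, it identifies $Per(L_0,L_1)$ with the kernel of the difference map $\pi_1(S_0)\times\pi_1(S_1)\to\pi_1(\P)$ induced by $(L_0)_*$ and $(L_1)_*$ (surjectivity onto the kernel is by directly building annuli, injectivity uses $\pi_2(\P)=0$). All three claims of the lemma then drop out of the structure of subgroups of $\Z\times\Z$, with no primitivity considerations, no covering spaces and no hyperbolic geometry. By contrast your plan requires a classification of annuli in the cover $\P_w$ whose well-definedness on $Per$---as you flag yourself---remains to be checked; the paper's kernel isomorphism dispenses with that bookkeeping in one step.

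There is also a concrete error in your Case 2: unobstructedness does \emph{not} force $[L_i]$ to be primitive, and the justification offered is a non sequitur. The deck stabilizer of the embedded line $\widetilde L_i$ does equal $\langle [L_i]\rangle$, but this stabilizer is merely \emph{contained} in the centralizer $\langle z\rangle$ of $[L_i]$ (with $z$ the primitive root) and can be the proper subgroup $\langle z^m\rangle$ with $m\ge 2$. For instance, a curve winding twice around a boundary-parallel simple closed curve with one transverse crossing is unobstructed---each of its two sub-loops represents $z\neq 1$, so there is no monogon---yet it represents $z^2$. This invalidates the formula $(p,q)=(d_0,d_1)/\gcd(d_0,d_1)$, and means the lifts $\widetilde L_i$ to $\P_w$ may wind multiply around the core, so the picture of ``annuli with boundary on two fixed primitive core curves'' is too optimistic. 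The existence of a coprime $(p,q)$ with $pL_0\sim qL_1$ is still true, but should be extracted directly from the centralizer argument: write $[L_0]\sim z^a$, $[L_1]\sim z^b$ and take $(p,q)=(b,a)/\gcd(a,b)$.
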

			\begin{proof}
			Denote $L_i:S_i\looparrowright \P$. By Assumption \ref{assum:no_swithches} we can precompose boundaries with $L_i$, i.e. $A:S^1\times\{i\}\rightarrow S_i \xrightarrow{L_i} \P$. Thus, after introducing intersection point between $L_0$ and $L_1$ via isotopy, if necessary, we get the following sequence  
			$$Per(L_0,L_1) \rightarrow%
			\pi_1(S_0 \times S_1) = \pi_1(S_0) \times \pi_1(S_1) \xrightarrow{{L_0}_* + {L_1}_*} \pi_1(\P).$$
			The statement of the lemma follows from 
			$$Per(L_0,L_1)\cong Ker({{L_0}_* + {L_1}_*}).$$
			Surjectivity is straightforward, while injectivity follows from $\pi_2(\P)=0$. 
			\end{proof}

			\begin{definition}
			A {\it shadow} of $A \in Per(L_0,L_1)$ is a two-chain 
			\[Sh(A)=\sum\limits_{open \ D_i \ \subset \ \P \setminus (L_0\cup L_1)} 
			deg(A|_{D_i}) \cdot \ov{D}_i.\]
			\end{definition}
			The key observation is that for $A$ to have immersed orientation preserving representative requires $Sh(A)$ to have all coefficients positive. We call such shadows {\it positive}. In fact, we have:

			\begin{lemma}\label{lemma:non-admiss_iff_positive_shadow_iff_no_intersection}
			The following four statements are equivalent:
			\begin{enumerate}[label=\textbf{\alph*})]
				\item There exists an essential immersed periodic map $A:(S^1\times [0,1],S^1\times\{0\},S^1\times\{1\}) \rightarrow (\P,L_0,L_1)$, and so $(L_0,L_1)$ is not admissible.

				\item There exists a periodic map $A$ with positive shadow for a pair $(L_0,L_1)$ in $\P$.

				\item There exists a periodic map $A$, such that $\tilde{L}_0 \cap \tilde{L}_1 = \emptyset$, where $\tilde{L}_i$ is a lift of $L_i$ to a covering $\tilde{P}$  corresponding to the subgroup $Im(A_*)\subset \pi_1(\P)$.

				\item There exists a periodic map $A$, such that a pair $(\tilde{L}_0,\tilde{L}_1)$ in $\tilde{P}$ has a periodic map with positive shadow.
			\end{enumerate}
			\end{lemma}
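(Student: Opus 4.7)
My plan is to prove the four-way equivalence via the cycle (a) $\Leftrightarrow$ (b), (a) $\Leftrightarrow$ (d), (c) $\Leftrightarrow$ (d). First I would establish (a) $\Leftrightarrow$ (b), which is local in $\P$. For (a) $\Rightarrow$ (b), at any regular value $q \in D_i$ the degree $\deg(A|_{D_i})= \#A^{-1}(q)$ is a nonnegative integer; nonnegativity everywhere together with the balance relation $n_{NE}+n_{SW}=n_{NW}+n_{SE}$ at each interior crossing of $L_0 \cup L_1$, plus essentialness of $A$, force strict positivity on every region. Conversely, given a positive shadow $\sum n_i \overline D_i$, one builds an orientation-preserving immersion by gluing $n_i$ copies of $\overline D_i$ along shared boundary arcs in the prescribed multiplicities; the resulting surface maps to $\P$ with boundary on $L_0 \cup L_1$, and since $\pi_2(\P)=0$ and $\partial \mathrm{Sh}(A) = L_0 + L_1$ as $1$-chains, this surface is an annulus.

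Next, (a) $\Leftrightarrow$ (d) is a lift/project argument. Given an essential immersed $A$ in $\P$, its domain is an annulus so $A$ factors through the cover $\tilde P$ corresponding to $\mathrm{Im}(A_*)$ to yield an immersed $\tilde A$; its shadow is positive by (a) $\Rightarrow$ (b) applied in $\tilde P$. Conversely, composing any positive-shadow periodic map in $\tilde P$ with the covering projection gives an orientation-preserving immersion in $\P$, which is essential because the covering is nontrivial.

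For (c) $\Leftrightarrow$ (d), the key geometric input is that $L_0, L_1$ are unobstructed: their lifts $\tilde L_0, \tilde L_1$ to the universal cover are embedded, hence also embedded in the intermediate cover $\tilde P$ corresponding to $\mathrm{Im}(A_*) \cong \Z$. The direction (c) $\Rightarrow$ (d) is then immediate: disjoint embedded homotopic curves in $\tilde P$ cobound an embedded annulus whose shadow has coefficient $1$ on every region it contains. For (d) $\Rightarrow$ (c): by (a) $\Leftrightarrow$ (b) in $\tilde P$ we obtain an orientation-preserving immersed annulus $\tilde B$ between the embedded $\tilde L_0, \tilde L_1$; if $\tilde L_0 \cap \tilde L_1 \neq \emptyset$, pick an innermost bigon $B$ they cobound, and at its two corners combine the balance relation with $\partial \mathrm{Sh}(\tilde B) = \tilde L_0 + \tilde L_1$ to conclude that the coefficient on the region across $B$ must drop by $1$, eventually producing a nonpositive coefficient, contradicting positivity.

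I expect the main obstacle to be making the innermost-bigon argument in (d) $\Rightarrow$ (c) completely rigorous, with particular care around sign conventions in $\partial \mathrm{Sh}(\tilde B) = \tilde L_0 + \tilde L_1$ and around bigons with corners on $\partial \tilde P$ when $L_0$ or $L_1$ is an arc. The other three implications should be essentially formal once (a) $\Leftrightarrow$ (b) is in hand.
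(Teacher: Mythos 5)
Your route is a genuine reorganization of the argument relative to the paper's. The paper immediately reduces to the intermediate cover: it puts a hyperbolic metric on $\P$, observes that the cover $\tilde P$ corresponding to $Im(A_*)=\langle pL_0\rangle=\langle qL_1\rangle$ is a cylinder (quotient of $\mathbb H$ by a single parabolic or hyperbolic isometry), and then declares the equivalence of (d) with (a), (b), (c) ``straightforward'' in that cylinder. In other words, the paper never works with shadows in $\P$ itself; the cylinder reduction is precisely what turns the problem into a transparent picture of two closed curves parallel to the core circle. You, by contrast, try to prove (a)$\Leftrightarrow$(b) directly in $\P$ and only pass to $\tilde P$ for (c)$\Leftrightarrow$(d). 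That creates work the paper avoids.

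Two concrete issues. First, in (a)$\Rightarrow$(b) you argue toward ``strict positivity on every region.'' This is both unnecessary and, as stated, false: an immersed orientation-preserving annulus need not cover every region of $\P\setminus(L_0\cup L_1)$, so coefficients can be zero. What is needed (and all the rest of the argument requires) is nonnegativity, which is immediate from orientation-preservation and needs neither the balance relation nor essentialness. ``Positive shadow'' in the paper's usage should be read as ``no negative coefficients.'' Second, and more seriously, your (b)$\Rightarrow$(a) is a gap: you assert that a nonnegative $2$-chain with the right boundary can be glued into an orientation-preserving immersed annulus, but in a general surface this reassembly is the hard part (compare de Silva--Robbin--Salamon, which does this for discs, not annuli, and with substantial work). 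There is no control over how the prescribed multiplicities of the $\overline D_i$ should be matched along arcs, and the appeal to $\pi_2(\P)=0$ does not by itself rule out the glued-up surface being disconnected or of higher genus. This is exactly what the paper sidesteps: in the cylinder $\tilde P$ the lifted curves are embedded and homotopic to the core, so the region between them visibly is an annulus, and no abstract gluing is required.

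Your observation about embeddedness of the lifts is correct and is the same key input as the paper's, but the justification is imprecise: ``embedded in the universal cover, hence embedded in the intermediate cover'' is false for an arbitrary intermediate cover. It works here only because $\tilde P$ corresponds to a cyclic subgroup generated by a power of $[L_i]$, so the lift line in $\mathbb H$ is invariant under the covering group and descends injectively. You should make that explicit. With the lifts embedded, the (d)$\Rightarrow$(c) innermost-bigon argument is the right idea and matches what the paper implicitly means by ``straightforward'' in the cylinder; your self-identified delicacies (sign conventions in $\partial\mathrm{Sh}$, boundary bigons for arcs) are real, but manageable. The cleaner reorganization, closer to the paper, would be to prove only the trivial direction (a)$\Rightarrow$(b) in $\P$, and then establish (b)$\Rightarrow$(d), (d)$\Leftrightarrow$(c), (c)$\Rightarrow$(a) entirely inside the cylinder, where the embeddedness of lifts and the bigon/annulus picture give you everything without the problematic gluing step.
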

			\begin{proof}
			Suppose $Im(A_*)=\langle p L_0\rangle = \langle q L_1\rangle $. Choose a metric so that $\P$ is hyperbolic. Then $\tilde{P}=\mathbb{H}/{\langle \gamma \rangle }$, where $\gamma$ is a translation along a geodesic. This geodesic is the preimage of a geodesic representing $p L_0= q L_1$. Because this translation is fixed point free, it is either parabolic or hyperbolic. This implies that $\tilde{P}$ is homeomorphic to a cylinder. It is now straightforward to see that d) is equivalent to all other statements.
			\end{proof} 
			Now we are prepared to make any pair $(L_0,L_1)$ admissible. Suppose there is an essential immersed periodic map A. Then we isotope one of the curves (say $\tilde{L}_0$) in the covering $\tilde{P}$ to introduce an intersection with another curve, and then push the isotopy down to pillowcase. Note that if $Im(A_*)=\langle p L_0\rangle = \langle q L_1\rangle $, then we need to do isotopies of $\tilde{L}_0$ in $p$ different points, so that it projects down to an isotopy of $L_0$.
		
		\item 
			Here we need to show that $\mathcal M(x,y)$ is finite, assuming $(L_0,L_1)$ is admissible \footnote{In fact, one can prove that  $\mathcal M(x,y)$ if finite for not admissible pairs too, but for our purposes we do not need this.}. Denote by $\pi_2(x,y)$ the space of homotopy classes of smooth discs from $x$ to $y$. We first show that there is a finite number of elements $\phi \in \pi_2(x,y)$, which can have immersed representatives (the relevant condition is shadow $Sh(\phi)$ being positive). Then we show that every such class $\phi$ has exactly one immersed representative from $\mathcal M(x,y)$.

			\begin{lemma}\label{lemma:action}
			In case $\pi_2(x,y) \neq \emptyset$ we have a free and transitive action
			\[
			Per(L_0,L_1)\cong  \pi_2(x,x) \curvearrowright \pi_2(x,y).
			\]
			\end{lemma}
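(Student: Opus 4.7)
The plan is to realize both sides via intersection theory in a cover of $\P$, exploiting the fact that $\P$ is aspherical (a surface with non-empty boundary has $\pi_2=0$). I would first fix a lift $\tilde{x}$ of $x$ to the universal cover $\pi\colon\tilde{\P}\to\P$. By the unobstructedness assumption, the lifts $\tilde{L}_0,\tilde{L}_1$ passing through $\tilde{x}$ are properly embedded lines or arcs in $\tilde{\P}$. Any $\phi\in\pi_2(x,y)$ lifts uniquely to a disc $\tilde\phi$ based at $\tilde{x}$, whose other corner $\tilde{y}$ lies in $\tilde{L}_0\cap\tilde{L}_1$ and projects to $y$. Conversely, since $\tilde{\P}$ is contractible, any such $\tilde{y}$ determines a unique $\tilde\phi$ up to boundary-preserving homotopy. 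This yields a bijection
\[
\pi_2(x,y) \;\longleftrightarrow\; \{\,\tilde{y}\in \tilde{L}_0\cap \tilde{L}_1 \;:\; \pi(\tilde{y})=y\,\}.
\]

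Let $G\subset\pi_1(\P)$ be the subgroup of deck transformations setwise preserving both $\tilde{L}_0$ and $\tilde{L}_1$. Specializing the bijection above to $y=x$ identifies $\pi_2(x,x)$ with the orbit $G\cdot\tilde{x}$, which is in free bijection with $G$ since the deck action on $\tilde{\P}$ is free. A direct check shows that concatenation of bigons at the corner $x$ corresponds to multiplication in $G$, so $\pi_2(x,x)\cong G$ as groups. On the other hand, the stabilizer of $\tilde{L}_i$ in $\pi_1(\P)$ is the cyclic subgroup generated by $[L_i]$, so $G$ consists of the common powers $\langle[L_0]^p\rangle=\langle[L_1]^q\rangle$, matching the identification of $Per(L_0,L_1)$ as $\ker({L_0}_*+{L_1}_*)$ from the proof of the preceding lemma.

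Under these identifications, the natural action of $Per(L_0,L_1)\cong G$ on $\pi_2(x,y)$ becomes the deck action on lifts of $y$. Freeness is immediate from freeness of the deck action on $\tilde{\P}$. For transitivity, given two lifts $\tilde{y}_1,\tilde{y}_2\in \tilde{L}_0\cap \tilde{L}_1$ projecting to $y$, there is a unique $g\in\pi_1(\P)$ with $g\tilde{y}_2=\tilde{y}_1$; since $g\tilde{L}_0$ is a lift of $L_0$ passing through $\tilde{y}_1\in\tilde{L}_0$, and lifts of an unobstructed curve are determined by any point they contain, $g\tilde{L}_0=\tilde{L}_0$, and similarly $g\tilde{L}_1=\tilde{L}_1$, so $g\in G$.

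The main obstacle is the step ``distinct lifts of $L_i$ through the same point coincide'': this is the only place where unobstructedness is used, and it holds precisely because unobstructed curves lift to embedded arcs or lines in $\tilde{\P}$ (Section \ref{sec:setup}). Everything else is formal given asphericity of $\P$; in particular verifying that concatenation of bigons matches the group law in $G$ is a routine picture-chasing argument at the lifted corner $\tilde{x}$.
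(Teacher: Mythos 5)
Your argument is correct and in fact supplies exactly the content that the paper's one-line proof (which defines the concatenation product $\phi*\psi$ and then just asserts ``the statement follows from this construction'') leaves unsaid. The universal-cover/asphericity argument is the natural way to justify freeness and transitivity, and the identification $\pi_2(x,x)\cong G\cong Per(L_0,L_1)$ is carried out correctly.

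One point of justification needs to be repaired, however. The assertion that ``lifts of an unobstructed curve are determined by any point they contain'' is false in general. Unobstructedness guarantees that each individual lift $\tilde L_i$ is an embedded arc or line, but two distinct lifts $\tilde L_i$ and $g\tilde L_i$ may still cross each other — indeed this happens precisely over the self-intersection points of $L_i$ in $\P$, so a preimage of a self-intersection point lies on more than one lift. What actually rescues the transitivity step is the admissibility hypothesis that there are no triple intersection points: since $\tilde y_1$ projects to $y\in L_0\cap L_1$, the point $y$ cannot also be a self-intersection of $L_0$ or of $L_1$, and therefore the lift of each $L_i$ through $\tilde y_1$ genuinely is unique. (The same hypothesis is also silently used when you declare that $\tilde L_0,\tilde L_1$ through $\tilde x$ are well-defined, since $x$ must likewise be a simple point of each curve.) Unobstructedness still plays a role, but in a different place: embeddedness of $\tilde L_i$ is what makes the sub-arc from $\tilde x$ to $\tilde y$ along $\tilde L_i$ unique, which together with contractibility of the universal cover gives the uniqueness of the lifted bigon and hence the bijection $\pi_2(x,y)\leftrightarrow\{\tilde y\in\tilde L_0\cap\tilde L_1:\pi(\tilde y)=y\}$.
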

			\begin{proof}
			The general definition of multiplication 
			\begin{align*}
			\pi_2(x,y) \times \pi_2(y,z) &\rightarrow \pi_2(x,z) \\
			(\phi,\psi) &\mapsto \phi*\psi
			\end{align*}
			is given by pinching an arc in the middle of the disc and considering maps $\phi$ and $\psi$ on the resulting two discs (which are connected by one point). The statement follows from this construction.
			\end{proof}

			Thus we get that $\pi_2(x,y)=\{\phi\}, \ \Z \ or \ \emptyset$. Next, let us prove that in the case $\pi_2(x,y)=\Z$ we have only a finite number of elements with immersed representatives.

			The shadow of an element $\phi \in \pi_2(x,y)$ is defined in the same way as for  $A \in Per(L_0,L_1)$.

			\begin{proposition}
			Only a finite number of elements in $\pi_2(x,y)$ have a positive shadow, and thus can have an immersed representative from $\mathcal M(x,y)$.
			\end{proposition}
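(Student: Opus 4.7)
The plan is to parametrize $\pi_2(x,y)$ by $\Z$ using Lemma \ref{lemma:action} and then use admissibility to bound this integer parameter. If $\pi_2(x,y)$ is $\emptyset$ or a singleton the statement is trivial, so assume $\pi_2(x,y) = \Z$, which by Lemma \ref{lemma:action} forces $Per(L_0,L_1) = \Z$. Fix a generator $A$ of $Per(L_0,L_1)$ and some class $\phi_0 \in \pi_2(x,y)$; every element then has the form $\phi_n = \phi_0 + n \cdot A$ for a unique $n \in \Z$. The pinching construction in the proof of Lemma \ref{lemma:action} shows that shadow is additive under this action, so $Sh(\phi_n) = Sh(\phi_0) + n \cdot Sh(A)$ as two-chains on $\P \setminus (L_0 \cup L_1)$.

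The key geometric input is that admissibility of $(L_0,L_1)$ forces $Sh(A)$ to have both strictly positive and strictly negative coefficients on some regions. Indeed, $A$ is essential, so $Sh(A) \neq 0$; but by Lemma \ref{lemma:non-admiss_iff_positive_shadow_iff_no_intersection} (the implication b) $\Rightarrow$ a)), $Sh(A)$ cannot have all coefficients non-negative, and the same argument applied to the generator $-A$ shows that $-Sh(A)$ cannot either. Hence there exist complementary regions $D_i, D_j \subset \P \setminus (L_0 \cup L_1)$ whose coefficients $c_i, c_j$ in $Sh(A)$ satisfy $c_i < 0 < c_j$.

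Letting $a_i, a_j$ denote the coefficients of $Sh(\phi_0)$ on $D_i$ and $D_j$, the requirement that $Sh(\phi_n)$ be a positive chain implies in particular
\[
a_i + n c_i \ge 0 \qquad \text{and} \qquad a_j + n c_j \ge 0,
\]
which bound $n$ from above by $a_i / |c_i|$ and from below by $-a_j / c_j$. Only finitely many integers fall in this window, so only finitely many classes $\phi_n \in \pi_2(x,y)$ admit an immersed representative, as required.

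The only technical point I expect to need care is the additivity $Sh(\phi * \psi) = Sh(\phi) + Sh(\psi)$ under the pinching multiplication, but this is essentially a degree count region by region. Everything else is a finite combinatorial bound; the substantive geometric content is packaged into the admissibility hypothesis via Lemma \ref{lemma:non-admiss_iff_positive_shadow_iff_no_intersection}.
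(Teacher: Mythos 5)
Your proof follows essentially the same route as the paper: parametrize $\pi_2(x,y)$ via the free transitive $\pi_2(x,x)\cong Per(L_0,L_1)$-action (Lemma \ref{lemma:action}), use additivity of shadows under the pinching multiplication, and use admissibility (via Lemma \ref{lemma:non-admiss_iff_positive_shadow_iff_no_intersection}) to conclude that a generating periodic class has a shadow with coefficients of both signs, which forces the positivity constraints to bound the $\Z$-parameter to a finite window. The only soft spot --- shared with the paper's own proof --- is the leap from ``no periodic class with \emph{positive} shadow'' (all coefficients $>0$) to ``every nonzero periodic class has coefficients of both signs,'' which silently handles the case of shadows that are non-negative but not strictly positive; your aside ``$A$ is essential, so $Sh(A)\neq 0$'' is not itself a justification and can be dropped, since the sign conclusion is what you actually use.
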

			\begin{proof}
			Every $0 \neq \phi \in \pi_2(x,x)$ has a shadow with both negative and positive coefficients because $(L_0,L_1)$ is admissible (see Lemma~\ref{lemma:non-admiss_iff_positive_shadow_iff_no_intersection}). For $\psi \in \pi_2(x,x)\cong Per(L_0,L_1)$ and $\phi \in \pi_2(x,y)$ we have $Sh(\psi * \phi)=Sh(\psi) + Sh(\phi)$. This, along with Lemma~\ref{lemma:action}, implies the statement of the proposition.
			\end{proof}

			\begin{proposition}
			The element $\phi \in \pi_2(x,y)$ can have at most one immersed representative, up to smooth reparameterizations.
			\end{proposition}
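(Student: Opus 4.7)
The plan is to work in the universal cover $\tilde{\P}$ and show that any immersed representative of $\phi$ lifts to an \emph{embedded} disc there, after which uniqueness is essentially Schönflies. Since each curve $L_i$ is unobstructed, each of its lifts to $\tilde{\P}$ is an embedded line (or arc). Fixing a lift $\tilde x$ of $x$, the homotopy class $\phi$ singles out lifts $\tilde{L}_0, \tilde{L}_1$ through $\tilde x$, and determines a distinguished lift $\tilde y$ of $y$ sitting on both of them. Any immersed representative $u \colon D^2 \to \P$ of $\phi$ then lifts uniquely to $\tilde u \colon D^2 \to \tilde{\P}$ whose boundary consists of an arc $\alpha_0 \subset \tilde{L}_0$ from $\tilde x$ to $\tilde y$ and an arc $\alpha_1 \subset \tilde{L}_1$ from $\tilde y$ to $\tilde x$, with convex corners at $\tilde x, \tilde y$; crucially, $\alpha_0$ and $\alpha_1$ are determined by $\phi$, not by $u$.

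The first step is to show that the boundary curve $\tilde\gamma = \alpha_0 \cup \alpha_1$ is an embedded Jordan curve in $\tilde{\P}$. Each $\alpha_i$ is embedded because $\tilde{L}_i$ is, so the only possible self-intersections come from transverse intersections of $\alpha_0$ with $\alpha_1$ strictly between $\tilde x$ and $\tilde y$. Any such interior intersection $\tilde z$ would chop off a proper sub-bigon of $\tilde u$ between, say, $\tilde x$ and $\tilde z$, with boundary on $\tilde{L}_0 \cup \tilde{L}_1$ and no switches. Iterating this cutting procedure yields an innermost immersed bigon, whose projection to $\P$ is either a null-homotopic disc with boundary on some $L_i$ or a fishtail at an interior crossing -- each forbidden by the unobstructedness assumption on $L_0, L_1$ and by admissibility of the pair (via Lemma \ref{lemma:non-admiss_iff_positive_shadow_iff_no_intersection}). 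Hence $\alpha_0$ and $\alpha_1$ meet only at the two corners, and $\tilde\gamma$ is Jordan.

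Since $\tilde{\P}$ is homeomorphic to an open disc (it is the universal cover of a compact surface with nonempty boundary and negative Euler characteristic), the Jordan--Schönflies theorem produces a unique closed topological disc $R \subset \tilde{\P}$ with $\bdry R = \tilde\gamma$. Now $\tilde u \colon D^2 \to \tilde{\P}$ is an immersion from a compact disc whose boundary traces out $\bdry R$ with degree one and whose interior is locally a diffeomorphism. A standard degree argument (or the fact that a proper immersion of a compact surface is a branched cover onto its image, unbranched here because $\tilde u$ is an immersion) then shows $\tilde u$ maps $D^2$ homeomorphically onto $R$. Two orientation-preserving homeomorphisms $D^2 \to R$ agreeing on the boundary differ by an orientation-preserving diffeomorphism of $D^2$, giving the claimed uniqueness up to reparametrization.

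The step I expect to be the main obstacle is ruling out interior intersections of $\alpha_0$ and $\alpha_1$: one must convert an innermost such crossing into a genuine bigon in $\P$ (not merely a topological disc in the cover) so as to contradict unobstructedness. The conversion requires tracking whether the innermost sub-bigon has convex or non-convex corner at $\tilde z$, and in the non-convex case replacing the bigon-cutting argument with a shadow-subtraction argument in the spirit of Lemma \ref{lemma:action}, reducing to a periodic-map contradiction with admissibility.
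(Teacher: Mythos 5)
Your proposal takes a genuinely different route from the paper. The paper's proof is essentially a citation: it invokes the reconstruction-from-shadow result, \cite[Theorem 6.8]{SRS}, which recovers a lune from its trace without ever requiring the boundary to be embedded, after first passing to the universal cover to make the ambient region compact and simply connected. You instead try to reduce to the embedded case in the cover (Jordan--Sch\"{o}nflies plus a degree argument), which, if it worked, would be a more elementary and self-contained argument than the one the paper delegates to [SRS].

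However, there is a genuine gap at the step you yourself flag as the main obstacle, and your stated resolution does not work. You claim that an innermost sub-bigon cut off by an interior intersection $\tilde z$ of $\alpha_0$ and $\alpha_1$ projects to either a null-homotopic disc with boundary on a single $L_i$ (zero switches) or a fishtail (one switch), and is therefore ruled out by unobstructedness. But an innermost bigon between $\alpha_0 \subset \tilde{L}_0$ and $\alpha_1 \subset \tilde{L}_1$ has boundary on \emph{both} lifts, with corners at \emph{two} distinct points of $\tilde{L}_0 \cap \tilde{L}_1$; its projection to $\P$ is a bigon between $L_0$ and $L_1$ with two corners (two switches). Such bigons are not forbidden by unobstructedness (which constrains each $L_i$ individually) nor by admissibility (which excludes essential annuli, not discs); they are exactly the discs that get counted in the Floer differential. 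Your fallback suggestion of ``reducing to a periodic-map contradiction'' also does not apply: removing a sub-bigon from $\phi$ produces an element of $\pi_2(x,z)$ or $\pi_2(z,y)$, not a periodic annulus in $Per(L_0,L_1)$, so Lemma~\ref{lemma:action} and admissibility give no contradiction. The honest content that would make your approach go through is a winding-number/shadow argument: one must show that a transverse self-crossing of $\tilde\gamma$ at $\tilde z$ forces the winding number (which equals the multiplicity of the orientation-preserving immersion $\tilde u$, hence must be non-negative and must drop to $0$ near the convex corners) either to go negative or to be incompatible with the alternating signs of the intersections $\tilde x, \tilde z, \tilde y$ along the embedded lines $\tilde{L}_0,\tilde{L}_1$. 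As written, this is precisely the part of the argument that is outsourced to [SRS] in the paper, and it is not supplied here.
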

			\begin{proof}
			This follows from the fact that $\phi \in M(x,y)$ can be reconstructed from its positive shadow, see the proof of \cite[Theorem 6.8]{SRS}, which applies in our case after passing to a universal cover and considering its compact submanifold containing the immersed discs in question.
			\end{proof}
		\item 
			The main idea behind $\bdry^2=0$ is that consecutive immersed discs with convex angles come in pairs (here we use the absence of fishtails), and so they cancel each other. For the details here we refer to \cite[Lemma 2.11]{Abouzaid}.

		\item 
			We have an isotopy $L_0^t$ from admissible $(L_0,L_1)$ to admissible $(L_0',L_1)$. We have problems with keeping this isotopy admissible only if $Per(L_0,L_1)=\Z$. Suppose $A$ is a generator of that group. Then, passing to a covering $\tilde{P}$ corresponding to the subgroup $Im(A_*)\subset \pi_1(\P)$, just like in Lemma~\ref{lemma:non-admiss_iff_positive_shadow_iff_no_intersection}, we can 1) isotope $\tilde{L}_1$ to $\tilde{L}_1'$  in such a way that all isotopies ${\tilde{L}_0}^{t}$ intersect $\tilde{L}_1'$, and $\tilde{L}_0$, $\tilde{L'}_0$ intersect $\tilde{L}_1^t$ at every time 2) do an isotopy $L_0^{t}$ from $L_0$ to $L'_0$ 3) isotopy $\tilde{L}_1'$ back to $\tilde{L}_1$. Both steps 1) and 3) can be done in such a way that the isotopy can be projected to $\P$. This sequence of isotopies keeps the pair admissible all the time.

		\item 
			One possibility here is to use the $A_\infty$ relations to define chain maps between $CF(L_0,L_1)$ and $CF(L'_0,L_1)$, and prove that their composition is homotopic to the identity, see \cite[Lemma 4.2]{HHK2} (with appropriate change of argument because of the weaker notion of admissibility in our case).

			Another approach is to note that the finger move from Figure~\ref{figure:finger_move} on the level of the Lagrangian Floer chain complex corresponds to a cancellation of the differential (see \cite[Appendix C]{SRS}). Here we need to prove that there is exactly one immersed disc between two points on the left of Figure~\ref{figure:finger_move}. There is only one other possibility, which is a disc covering the lower left and the lower right domains on the left of Figure~\ref{figure:finger_move}. But if such an immersed disc exists, we would have an immersed annulus on the right of Figure~\ref{figure:finger_move}, and this would contradict admissibility.
	\end{enumerate}

\Needspace{8\baselineskip}
\section{Pillowcase algebra} \label{sec:pill_algebra}
	In this section we will first parameterize the pillowcase $\P$ by arcs. Then we will associate to this parameterization a $dg$-algebra $\A$.

	\subsection{Parameterization of the pillowcase} \label{sec:param}
		In order to associate a concrete algebra to the pillowcase, we first need to pick a parameterization. The ultimate result --- Theorem~\ref{pairing theorem}, 	algebraic computation of pillowcase homology --- will not depend on the choice of a parameterization, and so we are free to choose a particular, simple enough parameterization of the pillowcase. It is entirely possible to choose a different parameterization, though it might result in a slightly more complicated algebra\footnote{Namely, if the graph $\Gamma$, defined on the next page, has cycles, several proofs in this paper become more complicated, and so we will choose such a parameterization that $\Gamma$ has no cycles.}. The difference between any two parameterizations is measured in a bimodule associated to the mapping class sending one parameterization to another. It turns out that every such mapping class can be decomposed into simple mapping classes called arc-slides, and the bimodules for arc-slides are easy to understand. This was carried out in detail in \cite{LOT-arc}, for the case of genus g surfaces with one boundary component.

		A parameterization consists of basepoints on the boundary $\bdry \P$, and a set of non-intersecting embedded arcs with ends on $\bdry \P$. The following properties should be satisfied: each component of $\bdry \P=S^1\cup S^1\cup S^1\cup S^1$ should get at least one basepoint, and cutting along the arcs should result a set of discs each having exactly one basepoint on the boundary. We pick a parameterization of the pillowcase as on the left of Figure~\ref{figure:parameterization--algebra}. Sometimes we will refer to the parameterizing arcs as the \say{red arcs}.

		\begin{figure}[!ht]
		\centering
		\includegraphics[width=1.1\textwidth]{./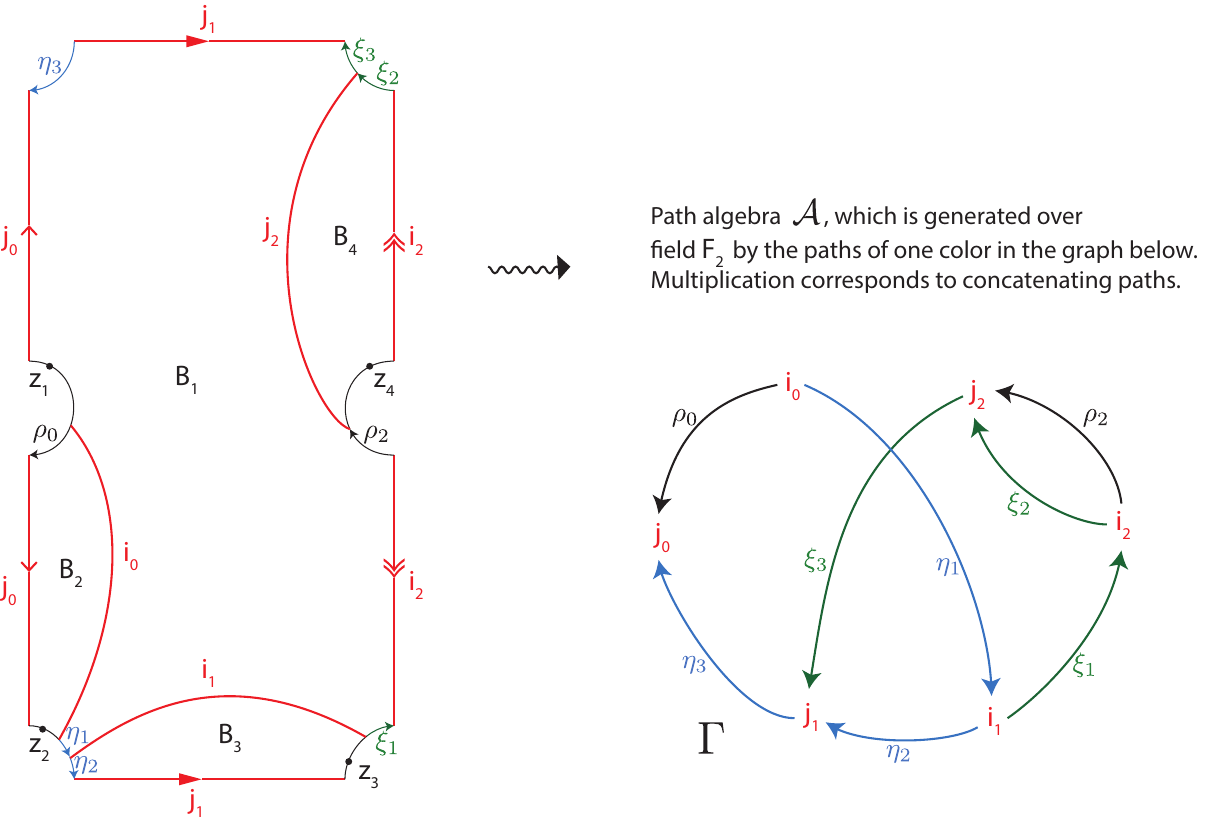}
		\caption{Parameterization of the pillowcase, and the corresponding algebra.}
		\label{figure:parameterization--algebra}
		\end{figure}

	\subsection{Pillowcase algebra} \label{sec:algebra}
		Any parameterization of $\P$ specifies a graph $\G$ --- the vertices are the arcs in the parameterization, and the edges are chords between the arcs on the boundary of $\P$, which do not pass through basepoints. One can see the graph corresponding to our parameterization on the right of Figure~\ref{figure:parameterization--algebra}.

		\begin{definition}
		{\it Pillowcase algebra} $\A$ is a path algebra of the graph $\G$. It means that it is generated over $\F_2$ by paths in $\G$ consisting of edges of one color (or same letters), and concatenating of paths corresponds to multiplication. When concatenating is not possible, or gives a path with edges of different colors, the multiplication results in zero. We mentioned that we want to have a $dg$-algebra corresponding to the pillowcase --- we define the differential to be trivial on $\A$. The subalgebra generated by vertices $\mathcal I=\langle i_0, \ i_1, \ i_2, \ j_0, \ j_1, \ j_2\rangle $ is called {\it idempotent subalgebra}. 
		\end{definition}

		\begin{any}{Explicit description of $\A$} 
		Algebra $\A$ is generated by the following elements (we specify here only those non-trivial multiplications which do not involve vertices):
		\begin{align}\label{description:algebra_elements}
		\A=\langle &i_0, \ i_1, \ i_2, \ j_0, \ j_1, \ j_2, \  \\
			&\rho_0, \ \rho_2, \ \xi_1, \ \xi_2, \ \xi_3, \  \nonumber\\
			&\xi_{12}=\xi_1\xi_2, \ \xi_{23}=\xi_2\xi_3, \ \xi_{123}=\xi_1\xi_2\xi_3=\xi_{12}\xi_3=\xi_1\xi_{23}, \ \nonumber\\ 
			&\eta_1, \ \eta_2, \ \eta_3, \ \eta_{12}=\eta_1\eta_2, \ \eta_{23}=\eta_2\eta_3, \ \eta_{123}=\eta_1\eta_2\eta_3=\eta_1\eta_{23}=\eta_{12}\eta_{3} \rangle _{\F_2} \nonumber.
		\end{align}
		Regarding multiplications which involve vertices: notice that constant paths (i.e. the vertices) are idempotents, and every path in $\A$ has its own left and right idempotent. These idempotents correspond to vertices of the start and the end of the path. All other vertices annihilate the path. For example, for the path $\xi_{12}$ we have $i_1 \xi_{12} j_2=\xi_{12}$, and multiplication by other idempotents results in zero.
		\end{any}
	  
\Needspace{8\baselineskip}
\section{From curves to modules} \label{sec:curve->module}
	To an immersed curve $L$ in $\P$ we associate a right $A_\infty$ module $M(L)_{\A}$ over the algebra $\A$. Before defining the module we need to isotope $L$ appropriately.

	\subsection{Preliminary isotopies of L} \label{sec:isotopies_to_define_modules}
		\begin{definition}
		We call a position of a curve $L$ \emph{nice} if for every parameterizing arc $i$ the pair $(i,L)$ is admissible, and also there are no immersed discs contributing to the differential in $CF(i,L)$.
		\end{definition}

		\begin{lemma}\label{lem:nice_existence}
		Every curve $L$ can be isotoped to be in a $nice$ position.
		\end{lemma}
		\begin{proof}
		First, if $L$ is an arc, we make a perturbation of $L$ in a small neighborhood of $\bdry \P$ by applying a twist along the orientation of $\bdry \P$, such that the end comes close to a basepoint passing all the parameterizing arcs on its way, see Figure~\ref{figure:perturbation}. This ensures that all the parameterizing arcs $i$ are admissible with $L$ as a pair $(i,L)$.

		\begin{figure}[ht]
		\centering
		\includegraphics[width=0.7\textwidth]{./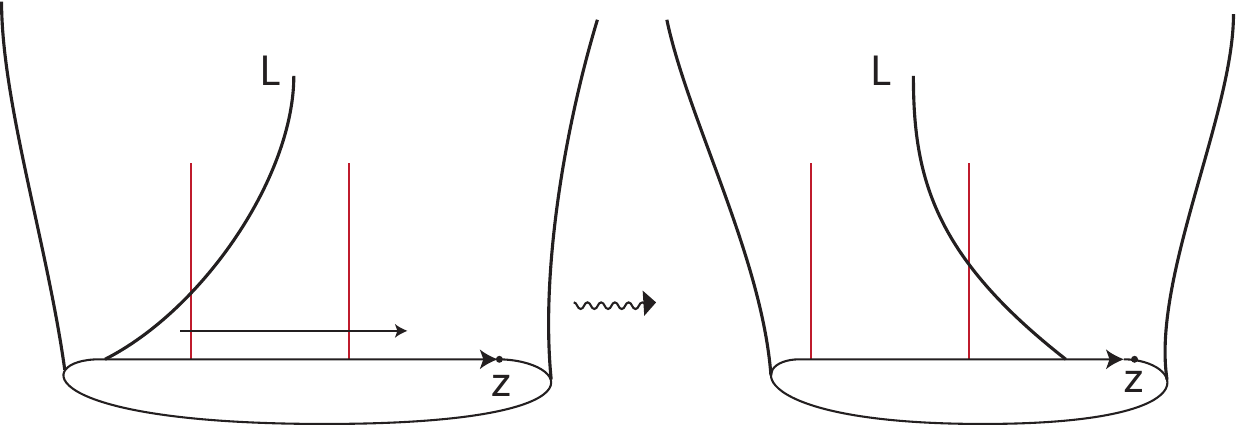}
		\caption{Perturbation near the boundary.}
		\label{figure:perturbation}
		\end{figure}

		Now we exclude the immersed discs contributing to $CF(i,L)$. We first fix the notation of arcs and discs, on which they cut the pillowcase, as in Figure~\ref{figure:curve}. Consider traversing along $L$ on the pillowcase --- this traversing (up to isotopies of $L$ which do not change intersections with arcs) is encoded in the cyclic sequence $S(L)$ of discs $B_k$ (we call them big domains), which are visited by L, as well as the connecting arcs between them. For example for the curve $L^{\natural}$ in Figure~\ref{figure:curve} we have a cyclic sequence $S(L^{\natural})=B_1 j_2 B_4 i_2 B_1 i_0 B_2 j_0 B_1 j_1 B_3 i_1 $. If $L$ is an arc then the sequence is not cyclic.

		We isotope a curve $L$ further, so that the sequence $S(L)$ does not have the same arcs around one big domain, i.e. it does not have a pattern $i B_l i$. Such an isotopy exists because if we have such a pattern, there is a finger move isotopy of $L$ removing $i B_l i$ from the sequence $S(L)$, see Figure~\ref{figure:finger_move}. The length of the sequence decreases, so the process of doing such finger move isotopies has to stop. 

		We claim that once there are not patterns $i B_l i$ in  $S(L)$, there cannot be immersed discs contributing to $CF(i,L)$. Suppose there is an immersed disc; then we can decrease the number of intersections of $L$ with the parameterizing arcs by doing a long finger move along that disc. In the process of doing that finger move every time the number of intersections decreases, it is because we eliminated some pattern $i B_l i$ from $S(L)$. But such patterns did not exist $\implies$ contradiction.

		\begin{figure}[ht]
		\centering
		\includegraphics[width=0.5\textwidth]{./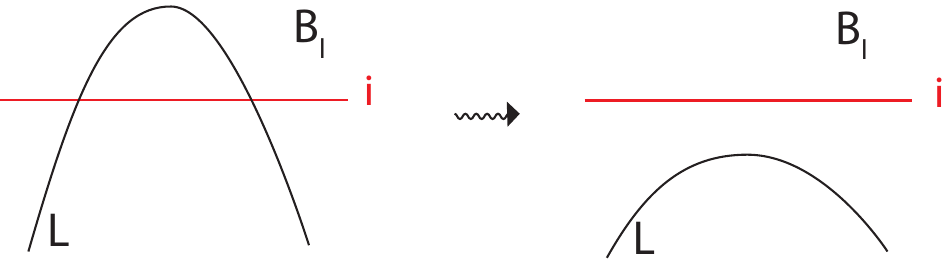}
		\caption{Finger move isotopy.}
		\label{figure:finger_move}
		\end{figure}
		\end{proof}

		\begin{lemma}\label{lem:nice_uniqueness}
		If two isotopic curves $L$ and $L'$ are both in a $nice$ position, their sequences are equal: $S(L)=S(L')$.
		\end{lemma}
		\begin{proof}
		This is straightforward, because both $S(L)$ and $S(L')$ can be reconstructed from the element $[L]=[L'] \in \pi_1(\P,\bdry \P)$.
		\end{proof}

	\subsection{Definition of M(L)}
		\begin{figure}[!h] %
		\includegraphics[width=0.8\textwidth]{./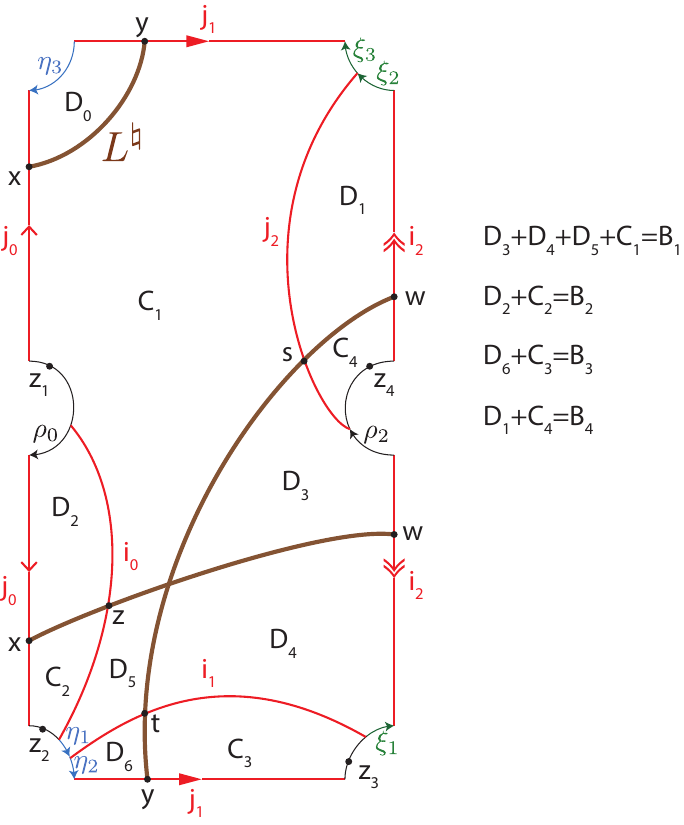}
		\caption{Curve $L^{\natural}$ on the pillowcase.}
		\label{figure:curve}
		\end{figure}

		\begin{figure}[!h] %
		\begin{tikzpicture}[scale=0.4,baseline=1.5cm]
			\foreach \a/\b in {1/w, 2/z, 3/x, 4/y, 5/t, 6/s}{
			\draw (\a*360/6: 4cm) node(\b){\b};
			}
			\draw[arrow] (z) to node[left]{$\rho_0$} (x);
			\draw[arrow] (y) to node[left]{$\eta_3$} (x);
			\draw[arrow] (t) to node[below]{$\eta_2$} (y);
			\draw[arrow] (t) to node[right]{$(\xi_1,\rho_2)$} (s);
			\draw[arrow] (w) to node[right]{$\xi_2$} (s);
			\draw[arrow] (z) to node[above]{$(\eta_1,\xi_1)$} (w);
			\draw[arrow] (z) to node[below]{$(\eta_1,\xi_{12})$} (s);
			\draw[arrow] (t) to node[above]{$\eta_{23}$} (x);
			
		\end{tikzpicture}
		\caption{$M(L^{\natural})_{\A}$, where $L^{\natural}$ is from Figure~\ref{figure:curve}.}
		\label{figure:module}
		\end{figure}
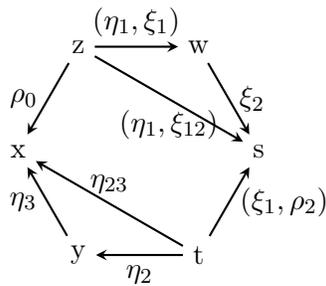

		\begin{figure}[!h]
		\centering
		\includegraphics[width=0.8\textwidth]{./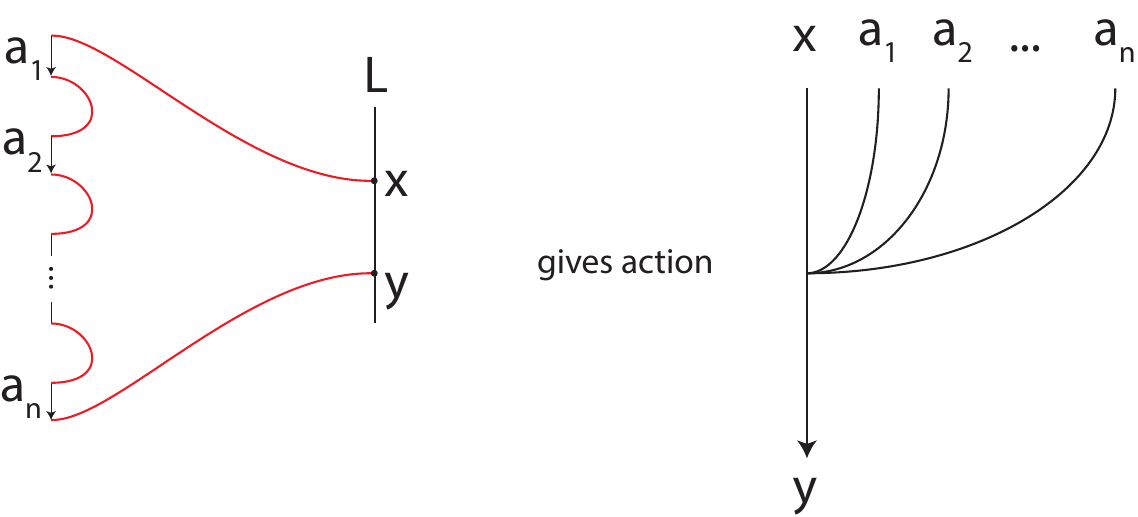}
		\caption{Immersed discs of this type define $A_\infty$ actions of the algebra $\A$ on the module $M(L)$.}
		\label{figure:actions}
		\end{figure}

		By Lemma~\ref{lem:nice_existence} we now assume that $L$ is in a nice position. A right $A_\infty$ module $M(L)_{\A}$ is defined as follows. Over $\F_2$ it is generated by all intersections of the curve $L$ with the red arcs. For example, for the curve $L^{\natural}$ in Figure~\ref{figure:curve} we have $M(L^{\natural})=\langle z,w,s,t,y,x\rangle _{\F_2}$.

		Idempotent subalgebra $\mathcal I$ acts on $M(L)$ from the right: every generator has a unique idempotent which preserves it, this idempotent corresponds to the arc on which this generator is sitting. Other idempotents annihilate the generator. For example for $M(L^{\natural})$ we get the following idempotents for the generators: $z_{i_0},w_{j_0},s_{j_1},t_{i_1},y_{j_2},x_{i_2}$.

		The rest of $\A$ acts on $M(L)$ by counting immersed (in fact they are all embedded, because (1) $L$ is in a nice position; (2) the graph $\Gamma$ does not have cycles) discs, missing basepoints, from one generator to another generator, such that the right boundary of the disc is mapped to arcs or $\bdry \P$, left boundary of the disc is mapped to $L$, and all the angles are convex --- see Figure~\ref{figure:actions}. Non-idempotent elements of the algebra $a_1, \dots, a_n$ which such a disc picks up on $\bdry \P$ give an $A_\infty$ action  $x\otimes_{\mathcal I} a_1 \otimes_{\mathcal I} a_2 \otimes_{\mathcal I} \dots \otimes_{\mathcal I} a_n \rightarrow y$.

		Now, let us explain how $M(L)$ can be recovered combinatorially, by doing it on the example curve $L^{\natural}$ in Figure~\ref{figure:curve}. First, let us count all the \say{basic} discs, which are contained entirely in one of the big domains $B_l$. Because there is one basepoint in each big domain $B_l$, there is exactly one basic disc between two consecutive generators. For the curve $L^{\natural}$ from Figure~\ref{figure:curve} the basic discs are: $z \xrightarrow{D_2} x $, $y \xrightarrow{D_0} x $, $t \xrightarrow{D_6} y $, $t \xrightarrow{D_4 + D_3} s $, $w \xrightarrow{D_1} s $, $z \xrightarrow{D_4 + D_5} w $.  Thus first we get a circle (if $L$ is an arc we get a sequence) of generators and actions between them, see Figure~\ref{figure:module} for an example of the module $M(L^{\natural})$. Notice that except the basic actions that form a circle there are two extra actions: $z \xrightarrow{D_1 + D_4 + D_5} s $, $t \xrightarrow{D_0 + D_6} x$. These are the actions which correspond to discs which are formed by juxtaposing basic discs along the arcs. These discs ensure that $d^2=0$ in our $A_\infty$ module. Note that every immersed disc can be decomposed into basic discs. Also every basic disc is contained in the finite number of discs --- otherwise we would have a cycle of chords on the $\bdry P$, and this is not possible because the graph $\Gamma$ has no cycles.

		The fact that the module $M(L)_{\A}$ is well defined follows from Lemma~\ref{lem:nice_uniqueness}.
\Needspace{8\baselineskip}
\section{Algebraic pairing}
	Here we will describe how to compute $HF(L_0,L_1)$ in terms of $M(L_0)_{\A}$ and $M(L_1)_{\A}$. One can skip most of this section, and jump straight to Definition~\ref{def:dualSmallBarRed}. The material before that definition is included to show how we arrived at that definition.

	\subsection{Koszul dual algebra}
		First, note that our algebra $\A$ is a 1-strand moving algebra $\A(\mathcal Z,1)$ (see \cite[Definition 2.6]{Zarev})  of the arc diagram $\mathcal Z$ drawn in Figure~\ref{figure:arc_diagram}.

		\begin{figure}[!ht]
		\centering
		\includegraphics[width=0.4\textwidth]{./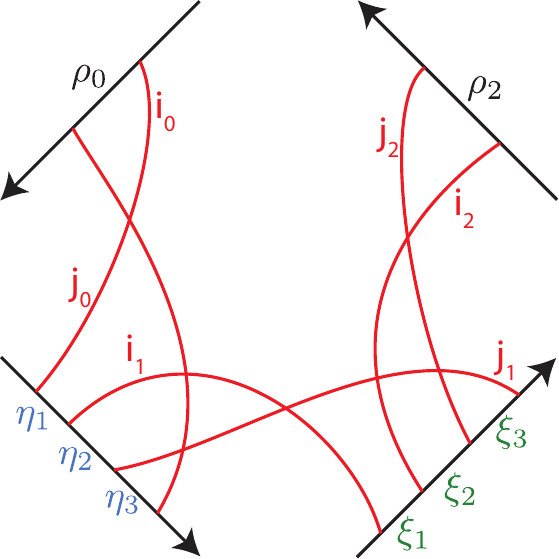}
		\caption{Arc diagram $\mathcal Z$, whose 1-strand moving algebra is $\A$.}
		\label{figure:arc_diagram}
		\end{figure}

		\begin{definition}
		Let us define a new algebra $\A^{5s}$ as $6-1=5$-strand moving algebra $\A(\mathcal Z, 5)$ of the same arc diagram. We call it {\it Koszul dual algebra} to algebra $\A$.
		\end{definition}

		\begin{any}{Explicit description of $\A^{5s}$}
		
		First, consider a new graph $\Gamma'$ in Figure~\ref{figure:dual_graph}, consisting of the reversed paths in the graph $\Gamma$ from Figure~\ref{figure:parameterization--algebra}. Our algebra is a path algebra of graph $\Gamma'$, i.e. 
		\[ \A^{5s}=\langle \{\text{paths in }\Gamma' \}\rangle _{\F_2}. \]
		Notice that now all the paths are of the same color. Let us denote the non-idempotent elements by $_{i'}a(e_1,e_2,\ldots,e_m)_{j'}$, where $e_i$ is an edge in $\Gamma'$, and indices are the start and the end of the path. As before, multiplication corresponds to concatenating paths, and so non-zero multiplications are all of the form 
		$$_{i'}a(e_1,e_2,\ldots,e_m)_{j'} \cdot {}_{j'}a(e_{m+1},e_{m+2},\ldots,e_{m+l})_{k'}=_{i'}a(e_1,e_2,\ldots,e_{m+l})_{k'}.$$
		There is a natural 1-1 correspondence between idempotents of $\A$ and $\A^{5s}$ given by $i \leftrightarrow i'$. Also, edge $e$ in graph $\Gamma'$ naturally gives an element $a_{\A}(e) \in \A$ by \say{reversing} the path, for example, $\xi'_{32} \mapsto \xi_{23}$.

		Differential this time is not zero. First, we specify differential on \say{linear} elements, consisting of one edge (as always, we list only non-zero differentials):
		\begin{align*}
		&d(a(\xi'_{21}))=a(\xi'_{2},\xi'_{1}), \ d(a(\xi'_{32}))=a(\xi'_{3},\xi'_{2}), \  d(a(\xi'_{321}))=a(\xi'_{32},\xi'_{1}) + a(\xi'_{3},\xi'_{21}), \\
		&d(a(\eta'_{21}))=a(\eta'_{2},\eta'_{1}), \ d(a(\eta'_{32}))=a(\eta'_{3},\eta'_{2}), \  d(a(\eta'_{321}))=a(\eta'_{32},\eta'_{1}) + a(\eta'_{3},\eta'_{21}). \\
		\end{align*}
		These induce differential on paths that consist of more edges by Leibniz rule. For example, for 3-edge paths we have 
		\[ d(a(e_1,e_2,e_3))=d(a(e_1) \cdot a(e_2) \cdot a(e_3))=d(a(e_1)) \cdot a(e_2) \cdot a(e_3) + a(e_1) \cdot d(a(e_2)) \cdot a(e_3) + a(e_1) \cdot a(e_2) \cdot d(a(e_3)). \]
		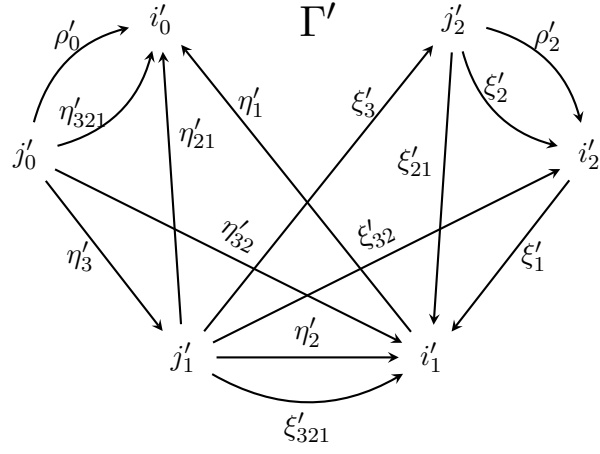
\begin{figure}[!ht]
		\centering
		\begin{tikzpicture}[scale=0.3,baseline=1.5cm]
			\node[circle](i'_0) at (6,15){$i'_0$};
			\node[circle](j'_0) at (0,9){$j'_0$};
			\node[circle](j'_1) at (7,0){$j'_1$};
			\node[circle](i'_1) at (18,0){$i'_1$};
			\node[circle](i'_2) at (25,9){$i'_2$};
			\node[circle](j'_2) at (19,15){$j'_2$};

			\node[circle] at (13,15){\scalebox{1.5}{$\Gamma'$}};

			\draw[arrow] (j'_0) to node[left]{$\eta'_3$} (j'_1);
			\draw[arrow] (j'_1) to node[above]{$\eta'_2$} (i'_1);
			\draw[arrow] (i'_1) to node[right,pos=0.8]{$\eta'_1$} (i'_0);
			\draw[arrow] (j'_0) to node[above, sloped]{$\eta'_{32}$} (i'_1);
			\draw[arrow] (j'_1) to node[right,pos=0.7]{$\eta'_{21}$} (i'_0);
			\draw[arrow] (j'_0) to[bend right] node[left]{$\eta'_{321}$} (i'_0);
			\draw[arrow] (j'_0) to[bend left] node[above]{$\rho'_{0}$} (i'_0);

			\draw[arrow] (j'_2) to[bend left] node[above]{$\rho'_{2}$} (i'_2);
			\draw[arrow] (j'_1) to node[left,pos=0.8]{$\xi'_3$} (j'_2);
			\draw[arrow] (j'_2) to[bend right] node[above]{$\xi'_2$} (i'_2);
			\draw[arrow] (i'_2) to node[right]{$\xi'_1$} (i'_1);
			\draw[arrow] (j'_1) to node[above,sloped]{$\xi'_{32}$} (i'_2);
			\draw[arrow] (j'_2) to node[left, pos=0.4]{$\xi'_{21}$} (i'_1);
			\draw[arrow] (j'_1) to[bend right] node[below]{$\xi'_{321}$} (i'_1);
		\end{tikzpicture}
		\caption{Graph $\Gamma'$, consisting of the reversed paths in the graph $\Gamma$ from Figure~\ref{figure:parameterization--algebra}.}
		\label{figure:dual_graph}
		\end{figure}
		\end{any}
		\begin{remark}
		For clarity we repeat here our notation: elements of algebra $\A$ are denoted by letters as described in Definition \ref{description:algebra_elements}. Some of those letters are edges of the graph $\Gamma$. For elements of the algebra $\A^{5s}$ the notation is $a(e_1,\ldots,e_m)$, where $e_i$ are the edges of the graph $\Gamma'$. Each edge $e$ in graph $\Gamma'$ naturally gives an element $a_{\A}(e) \in \A$ by \say{reversing} the path.

		\end{remark}

		\begin{remark}
		Although we will not use it, let us note that the algebra $\A^{5s}=\A(\mathcal Z,5)$ is Koszul dual to $\A=\A(\mathcal Z,1)$ in the sense of \cite[Definition 8.5]{LOT-mor}. The proof is the same as for Koszul duality of algebras in bordered Heegaard Floer homology, see \cite[Proposition 8.17]{LOT-mor}. Let us describe the rank-1 (over $\mathcal I$) Koszul dualizing bimodule $^{\A}K^{\A^{5s}}$. We define 
		\[
		K=\langle (i_0,i'_0), (i_2,i'_2),(i_2,i'_2),(j_0,j'_0),(j_1,j'_1),(j_1,j'_1) \rangle _{\F_2} \cong \langle \boldsymbol 1\rangle _{\mathcal I},
		\] 
		with differential $\delta^1:K \rightarrow  \A \otimes_{\mathcal I} K \otimes_{\mathcal I} \A^{5s} $ given by
		\[
		\delta^1(k,k')=\sum\limits_{_{s'} e_{k'} \text{ edge in } \Gamma'} {_{k} {a_{\A}(e)} _s \otimes (s,s') \otimes (a(_{s'}e_{k'}))}.
		\]
		\end{remark}

	\subsection{DD bimodule, and the pairing} \label{sec:DD}
		\begin{definition}
		{\it Dual small bar resolution} of algebra $\A$ is a type DD structure $\dualSmallBar$, whose generators correspond to elements of $ \A^{5s}$, i.e. each element
		$_{i'}{a(e_1,e_2,\ldots,e_l)}_{j'} \in \A^{5s}$ gives an element $ {}_i{b(e_1,e_2,\ldots,e_l)}_j \in \ov{bar}$. The type DD structure on $\ov{bar}$ over $\A$ is given by:
		\begin{align*}
		\delta^1:\ov{bar} \rightarrow \A \otimes_{\mathcal I} \ov{bar} \otimes_{\mathcal I}  &\A , \\
		\delta^1(_i{b(e_1,e_2,\ldots,e_l)}_j)=&\sum\limits_{e \in Edges(\Gamma'), start(e)=j'}
		1 \otimes {}_i{b(e_1,e_2,\ldots,e_l,e)}_k \otimes {_k{a}_{\A}(e)_j}
		+\\
		+ &\sum\limits_{e \in Edges(\Gamma'),end(e)=i'} 
		{_i{a}_{\A}(e)_k} \otimes {_k{b(e,e_1,e_2,\ldots,e_l)}_j} \otimes 1 
		+ \\
		+ &\sum\limits_{e_i \in \{e_1,e_2,\ldots,e_l\}} 
		1 \otimes b(e_1,e_2,\ldots,e_{i-1}) \cdot d(b(e_i)) \cdot b(e_{i+1},\ldots,e_l) \otimes 1.
		\end{align*}
		\end{definition}
		For the explicit description of the elements and the differential in $\dualSmallBar$ see Appendix. For convenience let us write here one example of how the differential acts:

		\[
		\begin{tikzpicture}[scale=1.5]
			\node[]() at (-1.5,1){$\delta^1|_{b(\eta_3',\xi_3',\xi_{21}')} \ =$};
			
			\node[](in1) at (0,2){$_{j_0}b(\eta_3',\xi_3',\xi_{21}')_{i_1}$};
			\node[circle,fill,scale=0.3](diff) at (0,1){};
			\node[](out1) at (0,0){$_{j_0}b(\eta_3',\xi_3',\xi_{2}',\xi_{1}')_{i_1}$};
			\draw[arrow] (in1) to (out1);

			\node[]() at (1.2,1){+};

			\node[](in1) at (2.4,2){$_{j_0}b(\eta_3',\xi_3',\xi_{21}')_{i_1}$};
			\node[circle,fill,scale=0.3](diff) at (2.4,1){};
			\node[](out1) at (2.4,0){$_{j_0}b(\eta_3',\xi_3',\xi_{21}',\eta_{1}')_{i_0}$};
			\node[](out2) at (4,0){$_{i_0}{\eta_{1}}_{i_1}$};
			\draw[arrow] (in1) to (out1);
			\draw[arrow] (diff) to[bend left] (out2);
		\end{tikzpicture}
		\]

		We will simplify $\dualSmallBar$ preserving its homotopy type. For that there exists a convenient tool called \say{cancellation}. Suppose there are two generators in a DD bimodule $\dualSmallBar$ satisfying $\delta^1(x)=y + \ldots$, i.e. there is only one action from $x$ to $y$, and it does not have any outgoing algebra elements (an example would be $b(\eta_3',\xi_3',\xi_{21}') \rightarrow 1\otimes b(\eta_3',\xi_3',\xi_{2}',\xi_{1}') \otimes 1$). Then we can cancel these two generators, i.e., first, erase $x$, $y$ and the arrows involving them from the bimodule, and second, add some other arrows between the generators left in the bimodule, guided by a certain cancellation rule. The outcome is a bimodule $^{\A}\ov{bar}{}'^{\A}$ with less generators, and which is homotopy equivalent to the previous one $ ^{\A}\ov{bar}{}'^{\A}$ $\simeq$ $\dualSmallBar$. See \cite[Section 3.1]{Boh} for the details of how cancellation works.

		We want to cancel all possible differentials in $\dualSmallBar$. It is clear that it does not matter which differentials and in which order we cancel, up to homotopy equivalence the end result $\dualSmallBarRed$ will be the same. Moreover, in our case it turns out that even the isomorphism class of $\dualSmallBarRed$ does not depend on the choice and order of cancellations. We could describe the cancellation process and prove the uniqueness of the resulting bimodule $\dualSmallBarRed$, but we decided to skip this unnecessary step, and simply define $\dualSmallBarRed$ explicitly below. Note that below we change the notation: instead of primes we will write minuses, i.e. $b(\xi'_1)$ becomes $b(-\xi_1)$, except for the constant path elements, in which case $b(i'_0)$ becomes $b(i_0)$. This is convenient for interpreting generators and differentials of $\dualSmallBarRed$ in Figure~\ref{figure:parameterization--algebra}.

		\begin{definition} \label{def:dualSmallBarRed}
		{\it Reduced small bar resolution} $ \dualSmallBarRed$ of algebra $\A$ is a type DD structure which consists of the following 24 generators (we list them with their idempotents): 

		\begin{align*}
		&_{i_2}{(b(i_2))}_{i_2},
		_{i_0}{(b(i_0))}_{i_0},
		_{j_1}{(b(j_1))}_{j_1},
		_{j_2}{(b(j_2))}_{j_2},
		_{j_0}{(b(j_0))}_{j_0},
		_{i_1}{(b(i_1))}_{i_1},\\
		&_{j_0}{(b(-\rho_0))}_{i_0},
		_{j_1}{(b(-\eta_2))}_{i_1},
		_{j_2}{(b(-\xi_2))}_{i_2},\\
		&_{j_0}{(b(-\eta_3))}_{j_1},
		_{j_1}{(b(-\xi_3))}_{j_2},
		_{j_2}{(b(-\rho_2))}_{i_2},
		_{i_2}{(b(-\xi_1))}_{i_1},
		_{i_1}{(b(-\eta_1))}_{i_0},\\
		&_{j_2}{(b(-\rho_2,-\xi_1))}_{i_1},
		_{j_1}{(b(-\xi_3,-\rho_2))}_{i_2},
		_{j_0}{(b(-\eta_3,-\xi_3))}_{j_2},
		_{i_2}{(b(-\xi_1,-\eta_1))}_{i_0},\\
		&_{j_0}{(b(-\eta_3,-\xi_3,-\rho_2))}_{i_2},
		_{j_1}{(b(-\xi_3,-\rho_2,-\xi_1))}_{i_1},
		_{j_2}{(b(-\rho_2,-\xi_1,-\eta_1))}_{i_0},\\
		&_{j_1}{(b(-\xi_3,-\rho_2,-\xi_1,-\eta_1))}_{i_0},
		_{j_0}{(b(-\eta_3,-\xi_3,-\rho_2,-\xi_1))}_{i_1},\\
		&_{j_0}{(b(-\eta_3,-\xi_3,-\rho_2,-\xi_1,-\eta_1))}_{i_0}.\\
		\end{align*}
		These are the actions of $\dualSmallBarRed$:
		$ \newline b(-\eta_3,-\xi_3,-\rho_2,-\xi_1)\longrightarrow 1 \otimes b(-\eta_3,-\xi_3,-\rho_2,-\xi_1,-\eta_1) \otimes \eta_1$, 
		$b(-\rho_2,-\xi_1,-\eta_1)\longrightarrow \xi_3 \otimes b(-\xi_3,-\rho_2,-\xi_1,-\eta_1) \otimes 1 $, 
		$b(-\eta_3)\longrightarrow 1 \otimes b(-\eta_3,-\xi_3) \otimes \xi_3 $, 
		$b(i_2)\longrightarrow \xi_2 \otimes b(-\xi_2) \otimes 1 $, 
		$b(i_2)\longrightarrow 1 \otimes b(-\xi_1) \otimes \xi_1 $, 
		$b(i_2)\longrightarrow \rho_2 \otimes b(-\rho_2) \otimes 1 $, 
		$b(j_1)\longrightarrow \eta_3 \otimes b(-\eta_3) \otimes 1 $, 
		$b(j_1)\longrightarrow 1 \otimes b(-\xi_3) \otimes \xi_3 $, 
		$b(j_1)\longrightarrow 1 \otimes b(-\eta_2) \otimes \eta_2 $, 
		$b(j_2)\longrightarrow 1 \otimes b(-\xi_2) \otimes \xi_2 $, 
		$b(j_2)\longrightarrow \xi_3 \otimes b(-\xi_3) \otimes 1 $, 
		$b(j_2)\longrightarrow 1 \otimes b(-\rho_2) \otimes \rho_2 $, 
		$b(j_0)\longrightarrow 1 \otimes b(-\rho_0) \otimes \rho_0 $, 
		$b(j_0)\longrightarrow 1 \otimes b(-\eta_3) \otimes \eta_3 $, 
		$b(-\xi_3,-\rho_2,-\xi_1,-\eta_1)\longrightarrow \eta_3 \otimes b(-\eta_3,-\xi_3,-\rho_2,-\xi_1,-\eta_1) \otimes 1 $, 
		$b(i_1)\longrightarrow \xi_1 \otimes b(-\xi_1) \otimes 1 $, 
		$b(i_1)\longrightarrow 1 \otimes b(-\eta_1) \otimes \eta_1 $, 
		$b(i_1)\longrightarrow \eta_2 \otimes b(-\eta_2) \otimes 1 $, 
		$b(-\rho_2,-\xi_1)\longrightarrow 1 \otimes b(-\rho_2,-\xi_1,-\eta_1) \otimes \eta_1 $, 
		$b(-\rho_2,-\xi_1)\longrightarrow \xi_3 \otimes b(-\xi_3,-\rho_2,-\xi_1) \otimes 1 $, 
		$b(i_0)\longrightarrow \rho_0 \otimes b(-\rho_0) \otimes 1 $, 
		$b(i_0)\longrightarrow \eta_1 \otimes b(-\eta_1) \otimes 1 $, 
		$b(-\xi_1)\longrightarrow \rho_2 \otimes b(-\rho_2,-\xi_1) \otimes 1 $, 
		$b(-\xi_1)\longrightarrow 1 \otimes b(-\xi_1,-\eta_1) \otimes \eta_1 $, 
		$b(-\eta_1)\longrightarrow \xi_1 \otimes b(-\xi_1,-\eta_1) \otimes 1 $, 
		$b(-\xi_3)\longrightarrow 1 \otimes b(-\xi_3,-\rho_2) \otimes \rho_2 $, 
		$b(-\xi_3)\longrightarrow \eta_3 \otimes b(-\eta_3,-\xi_3) \otimes 1 $, 
		$b(-\rho_2)\longrightarrow 1 \otimes b(-\rho_2,-\xi_1) \otimes \xi_1 $, 
		$b(-\rho_2)\longrightarrow \xi_3 \otimes b(-\xi_3,-\rho_2) \otimes 1 $, 
		$b(-\eta_3,-\xi_3,-\rho_2)\longrightarrow 1 \otimes b(-\eta_3,-\xi_3,-\rho_2,-\xi_1) \otimes \xi_1 $, 
		$b(-\xi_3,-\rho_2,-\xi_1)\longrightarrow \eta_3 \otimes b(-\eta_3,-\xi_3,-\rho_2,-\xi_1) \otimes 1 $, 
		$b(-\xi_3,-\rho_2,-\xi_1)\longrightarrow 1 \otimes b(-\xi_3,-\rho_2,-\xi_1,-\eta_1) \otimes \eta_1 $, 
		$b(-\xi_3,-\rho_2)\longrightarrow \eta_3 \otimes b(-\eta_3,-\xi_3,-\rho_2) \otimes 1 $, 
		$b(-\xi_3,-\rho_2)\longrightarrow 1 \otimes b(-\xi_3,-\rho_2,-\xi_1) \otimes \xi_1 $, 
		$b(-\eta_3,-\xi_3)\longrightarrow 1 \otimes b(-\eta_3,-\xi_3,-\rho_2) \otimes \rho_2 $, 
		$b(-\xi_1,-\eta_1)\longrightarrow \rho_2 \otimes b(-\rho_2,-\xi_1,-\eta_1) \otimes 1 $.
		\end{definition}

		Looking at the left of Figure~\ref{figure:parameterization--algebra}, it is convenient to see generators of $\dualSmallBarRed$ as paths (against orientation) on the boundaries of big domains $B_2$, $B_3$, $B_4$, $B_1$, encoded by the algebra elements one encounters on that path. The differential then corresponds to prolonging paths by one chord. See Figure~\ref{figure:geom_diff} for an example of this geometric interpretation of the differential. 

		\begin{figure}[!ht]
		\centering
		\includegraphics[width=0.7\textwidth]{./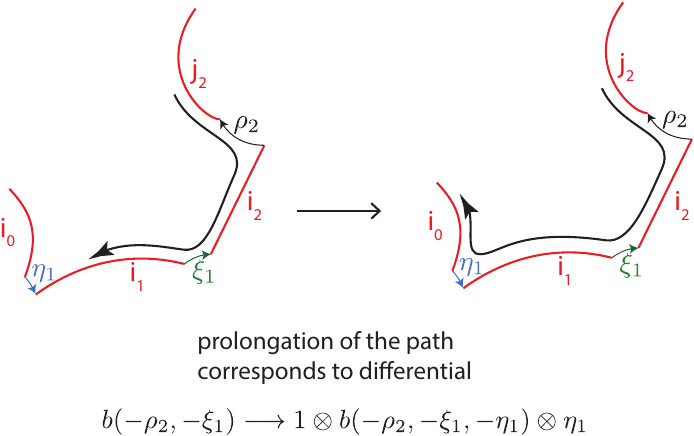}
		\caption{Geometric interpretation of the differential in $\dualSmallBarRed$.}
		\label{figure:geom_diff}
		\end{figure}

		\begin{any}{Algebraic pairing}
		First, let us refer to \cite[Section 2.3]{LOT-mor} for definitions of dual $A_\infty$ modules and type D-structures. Having an $A_\infty$ module $M_{\A}$, we denote its dual by $_{\A}\ov{M}$. Now we are ready to define algebraic pairing of curves in the pillowcase.

		\begin{definition}
		Suppose $L_0, L_1$ are two curves in the pillowcase $\P$. An {\it algebraic pairing } of curves is given by a complex
		$$M(L_1)_{\A} \boxtimes \dualSmallBarRed \boxtimes {}_{\A}\ov{M(L_0)}.$$
		\end{definition}

		\begin{remark}
		The way we constructed $\dualSmallBarRed$ ensures that $M(L_1)_{\A} \boxtimes \dualSmallBarRed \boxtimes {}_{\A}\ov{M(L_0)} \simeq Mor(M(L_0)_{\A},M(L_1)_{\A})$, see Section~\ref{sec:reasons}.
		\end{remark}
		\end{any}

\Needspace{8\baselineskip}
\section{Pairings are the same} \label{sec:thm}
	\begin{theorem}\label{pairing theorem}
	Let $L_0, L_1$ be two admissible unobstructed curves in the pillowcase $\P$. Then their Lagrangian Floer complex is homotopy equivalent to the algebraic pairing of curves: 
	$$CF_*(L_0,L_1) \simeq M(L_1)_{\A} \boxtimes \dualSmallBarRed \boxtimes {}_{\A}\ov{M(L_0)}.$$
	\end{theorem}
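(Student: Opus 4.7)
The plan is to reduce the theorem to an explicit geometric comparison by placing $L_0$ in a convenient standard position relative to the parameterization. First I will isotope $L_0$ by finger moves, invoking the admissibility-preserving isotopy procedure from Section 3, so that inside each big domain $B_k$ the curve $L_0$ runs parallel to and very close to the portion of $\bdry B_k$ consisting of red arcs; every intersection of $L_0$ with $L_1$ then takes place inside such a boundary collar. Since the Lagrangian Floer complex is invariant under admissibility-preserving isotopies, it suffices to compute $CF_*$ after this perturbation, which is analogous to the \emph{nice-diagram} reduction in pairing theorems for bordered Heegaard Floer homology.

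Next I establish a bijection of generators. After the isotopy, each intersection point of $L_0' \cap L_1$ inside the collar of $B_k$ is naturally labeled by a triple: an intersection $y$ of $L_0$ with a red arc $i$, an intersection $x$ of $L_1$ with a red arc $j$, and the segment of $\bdry B_k$ traversed against the boundary orientation from $j$ to $i$, avoiding basepoints. Reading off the algebra labels of the parameterizing arcs along this boundary segment produces exactly one of the paths enumerated in Definition \ref{def:dualSmallBarRed}; matching this with the idempotent constraints in $M(L_1)_{\A} \boxtimes \dualSmallBarRed \boxtimes _{\A}\ov{M(L_0)}$ identifies the generators of the two complexes via $(y,x,\text{chord}) \mapsto x \otimes b(\text{chord}) \otimes y$. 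The geometric interpretation of $\dualSmallBarRed$ as paths along the boundaries of big domains, illustrated in Figure \ref{figure:geom_diff}, is precisely designed so that this bijection works and so that the idempotent constraints correspond to the obvious geometric constraint of lying on the correct red arc.

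Finally I compare differentials. An immersed bigon from one intersection point to another in the standard position decomposes, domain by domain, into three types of local moves: (i) sliding the chord along the $L_1$ side across a red arc, which absorbs an algebra element into the $M(L_1)_\A$ action; (ii) sliding the chord along the $L_0$ side across a red arc, which absorbs an algebra element into the ${}_\A\ov{M(L_0)}$ action; (iii) extending or shortening the chord along $\bdry B_k$ without interacting with $L_0$ or $L_1$, which corresponds to a differential internal to $\dualSmallBarRed$. A term-by-term check then matches these three types of moves to the three summands that produce the box tensor differential, and establishes the chain isomorphism.

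The main obstacle I foresee is a uniqueness-and-exhaustiveness check for the lune decomposition of case (iii), together with the preliminary task of showing that a suitable collaring isotopy exists and preserves admissibility when $Per(L_0,L_1)=\Z$. One must use the directedness of $\G$ (no cycles) to ensure that each lune decomposes \emph{uniquely} into the three types of pieces and that no infinite families of spurious contributions arise on the boundary, and one must invoke the no-fishtail hypothesis on $L_0$ and $L_1$ to exclude degenerate bigons near the collars. Once this local combinatorial analysis is established, the bijection of generators upgrades to a chain isomorphism, proving the desired homotopy equivalence.
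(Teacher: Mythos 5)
Your overall plan matches the paper's: isotope the curves into a standard position, establish a bijection of generators indexed by triples (generator of $M(L_1)$, a chord along $\bdry B_k$, generator of $\ov{M(L_0)}$), and then match differentials locally within each big domain. However, there are two substantive issues.

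First, the isotopy scheme you propose is both underspecified and different in kind from the one the paper needs. You put $L_0$ alone into a collar near the red arcs, but the paper's isotopy is asymmetric in a way that matters: $L_0$ is pushed to run \emph{radially}, straight from the centers $b_k$ of the big domains to the centers of the red arcs, while $L_1$ hugs $\bdry \P \cup \{\text{red arcs}\}$ in a thin collar, executing a $360^\circ$ turn at the basepoint in each domain. This asymmetry is what makes the localization lemma work (every bigon is trapped near a single $B_k$ because it cannot pass through a $q \boxtimes b(i) \boxtimes k^*$ intersection near a red-arc center) and what makes the two types of bigons visible: those that wrap around the $360^\circ$ turn point of $L_1$ (corresponding to the $M(L_1)$-side action) and those that pass through the interior of $B_k$ (corresponding to the $\ov{M(L_0)}$-side action). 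Your proposed isotopy does not explain how $L_1$ sits inside the big domains or why every bigon localizes, so the key lemma is not yet available.

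Second, and more seriously, your case (iii) --- extending the chord along $\bdry B_k$ without interacting with $L_0$ or $L_1$, corresponding to an internal differential in $\dualSmallBarRed$ --- does not occur. By construction, every action in the \emph{reduced} bimodule $\dualSmallBarRed$ outputs a nontrivial algebra element on exactly one side; the actions of the form $1 \otimes b(\cdot) \otimes 1$ are precisely those cancelled when passing from $\dualSmallBar$ to $\dualSmallBarRed$. So there is no purely internal move, and including it suggests you are reasoning about the unreduced $\dualSmallBar$ rather than $\dualSmallBarRed$. The paper leverages the opposite fact: because no $\delta^1$ of $\dualSmallBarRed$ outputs on both sides, the triple box tensor is independent of bracketing as a chain complex (not just up to homotopy), and each box-tensor differential falls into exactly two types (left-side or right-side), which are then matched one-to-one with the two geometric bigon types. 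Your uniqueness worry for a putative type (iii) is therefore moot, and the appeal to directedness of $\Gamma$ is used in the paper earlier, in bounding the actions of $M(L)$, not here.
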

	\begin{proof}
	The plan for the proof is the following:
	\begin{enumerate}[label=(\arabic*)]
		\item We first isotope curves $L_0, L_1$ in a certain way.

		\item We prove that after the step (1) the pair $(L_0, L_1)$ becomes admissible.

		\item We then prove that the two chain complexes we consider are isomorphic: $CF_*(L_0,L_1)\cong M(L_1)_{\A} \boxtimes \dualSmallBarRed \boxtimes {}_{\A}\ov{M(L_0)}$.
		\begin{enumerate}[label=\textbf{\alph*})]
			\item We first see that the generators are in 1-1 correspondence.

			\item We then prove that the differentials coincide. This is done by \say{localizing} differential in the geometric pairing, i.e. by noting that every disc contributing to the differential is contained almost entirely in one of the big domains $B_1,B_2,B_3,B_4$. %
		\end{enumerate}
	\end{enumerate}
	We will be illustrating each step on our running example of curves: $L_0=L^\natural$ --- curve in Figure~\ref{figure:curve}, that corresponds to the trivial tangle $A_1 \cup A_2$, and $L_1=L^b$ --- belt around the pillowcase on the left of Figure~\ref{figure:pillowcase}. For $A_\infty$ actions on the dual module $_{\A}\ov{M(L^\natural)}$ see Figure~\ref{figure:dual_module}, and $A_\infty$ actions on ${M(L^b)_\A}=\langle x,s,z,w\rangle _{\F_2}$ (see Figure~\ref{figure:L_0+L_1}) are as follows: $z\otimes (\xi_3,\eta_3) \rightarrow x $, $z\otimes \rho_0 \rightarrow x $, $z\otimes (\eta_1,\xi_1) \rightarrow w $,  $w\otimes \xi_2 \rightarrow s $.

	\begin{any}{(1)} \label{step1}
		For a short visual description of the required isotopy one may look at Figure~\ref{figure:L_0+L_1}. Let us describe it now.

		First and foremost, we need to isotope both curves $L_0,L_1$ to make their positions nice, so that we can see modules $M(L_0)_\A,M(L_1)_\A$ geometrically. For that see Section~\ref{sec:isotopies_to_define_modules}.

		Let us describe further isotopies of $L_0$. Mark four points $b_1,b_2,b_3,b_4$ in the big domains $B_1,B_2,B_3,B_4$ like in Figure~\ref{figure:L_0+L_1}, and call them centers of the big domains. We then isotope the curve $L_0$ in the following way: first we make it intersect every red arc near its center (the centers of arcs are marked in Figure~\ref{figure:L_0+L_1}). Then we isotope $L_0$ so that in big domains it goes from the centers of big domains to the centers of red arcs straight (or, if $L_0$ is an arc, to the point on $\bdry \P$, see Figure~\ref{figure:perturbation_arcs}). See Figure~\ref{figure:L_0+L_1} for how the isotoped $L_0$ looks like.

		Concerning the curve $L_1$, we also make it intersect the red arcs near their centers. But the rest of the isotopy is different from $L_0$. First, tilt the angle in which it intersects the centers of the red arcs, so that the following is true: 1) $L_1$ is almost parallel to red arcs and intersects each nearby piece of $L_0$ exactly once; 2) going clockwise around the center of the red arc, we encounter the rays in the following order: red arc, all the pieces of $L_1$, all the pieces of $L_0$. See Figure~\ref{figure:near_center}.

		\begin{figure}[!ht]
		\centering
		\includegraphics[width=0.3\textwidth]{./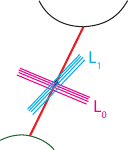}
		\caption{Perturbation near the centers of red arcs.}
		\label{figure:near_center}
		\end{figure}

		Next, we make the final isotopy of the $L_1$ curve, which has to do with the way it behaves inside the big domains. Divide $L_1$ on the segments by intersections with red arcs. We already specified how $L_1$ looks near those intersections. Now we will describe how each segment between those intersections is isotoped, by traversing $L_1$. First, an important thing to note, the whole $L_1$ will not leave the small neighborhood of $\bdry \P \cup \{\text{red arcs}\}$. We start at the center of the red arc, enter one of the big domains, and then go near the $\bdry \P \cup \{\text{red arcs}\}$ in that domain until it reaches a basepoint. If this is the end segment of $L_1$ being an arc, then $L_1$ is connected to $\bdry P$ near that basepoint, such that $(L_0,L_1)$ is admissible, see Figure~\ref{figure:perturbation_arcs}. Otherwise $L_1$ turns by $360^\circ$ (in the direction towards the other end of the segment, i.e. such that it does not introduce a fishtail), and goes backwards until it reaches the other end of the segment.  See Figure~\ref{figure:L_0+L_1} for how the isotoped $L_1$ looks like.

		\begin{figure}[ht]
		\centering
		\includegraphics[width=0.45\textwidth]{./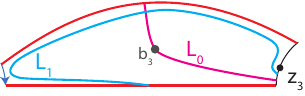}
		\caption{Perturbation of the end segments of the arcs.}
		\label{figure:perturbation_arcs}
		\end{figure}
	\end{any}

	\begin{figure}[H]
	{
	\setlength{\fboxsep}{15pt}%
	\fbox{\includegraphics[width=0.5\textwidth]{./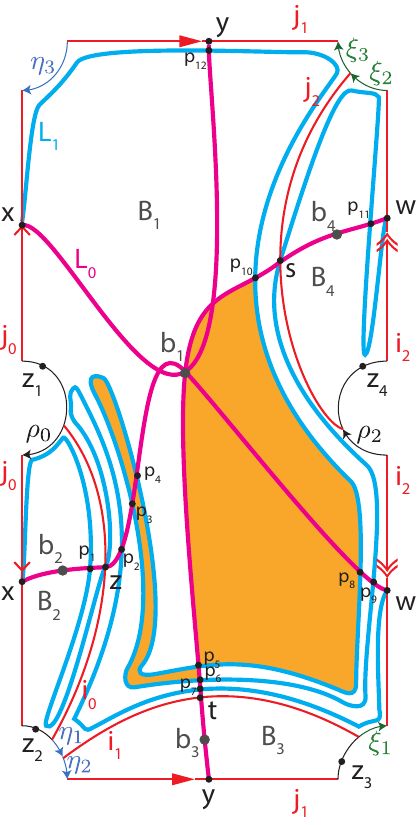}}
	}
	\caption{Isotopies of $L_0=L^\natural$ and $L_1=L^b$ so that the chain complexes of geometric and algebraic pairings become isomorphic: $CF_*(L_0,L_1)\cong M(L_1)_{\A} \boxtimes \dualSmallBarRed \boxtimes {}_{\A}\ov{M(L_0)}$.}
	\label{figure:L_0+L_1}
	\end{figure}

	\begin{any}{(2)}
		Let us prove that after step (1) the pair $(L_0,L_1)$ becomes admissible. All the non-smooth corners, triple intersections, non-transverse intersections are eliminated by introducing a slight perturbation. There are no immersed annuli because of intersections introduced in Figure~\ref{figure:near_center}. If $p L_0 \sim q L_1 $, and so $Per(L_0,L_1)=\Z$, those intersections lift to the covering from the Lemma~\ref{lemma:non-admiss_iff_positive_shadow_iff_no_intersection},d).
		If $L_0,L_1$ are arcs, they are in admissible position relative to basepoints because we ensured it while isotoping $L_1$, see Figure~\ref{figure:perturbation_arcs}.
	\end{any}

	\begin{any}{(3a)}
		The generators of $CF_*(L_0,L_1)$, as well as the generators of $M(L_1)_{\A} \boxtimes \dualSmallBarRed \boxtimes {}_{\A}\ov{M(L_0)}$, are in 1-1 correspondence with the set of paths along $\bdry B_i$, from intersections $L_1 \cap \{i_0, i_1, i_2, j_0, j_1, j_2\}=\{\text{generators of }M(L_1)\}$ to intersections $L_0 \cap \{i_0, i_1, i_2, j_0, j_1, j_2\}=\{\text{generators of }\ov{M(L_0)}\}$. The paths are against natural orientations of $\bdry B_i$, and consist of chords $-\gamma$ of length one (which are also elements of $\A$). Let us explain how to see those paths.

		Remember that elements of $\dualSmallBarRed$ naturally correspond to such paths, see Figure~\ref{figure:geom_diff}. For the generators in $M(L_1)_{\A} \boxtimes \dualSmallBarRed \boxtimes {}_{\A}\ov{M(L_0)}$, they each have their element of $\dualSmallBarRed$ in the center, and that describes the path from a generator in  $M(L_1)$ to generator in $M(L_0)$. For the generators of $CF_*(L_0,L_1)$, the desired path can be traversed along the $L_1$, see Figure~\ref{figure:L_0+L_1}. Notice that we include \say{0 length} paths, which correspond to intersections introduced when both $L_0,L_1$ cross the same red arc, see Figure~\ref{figure:near_center}.

		Considering example in Figure~\ref{figure:L_0+L_1}, we have:
		$$M(L_1)=\langle x,s,z,w\rangle_{\F_2}, \qquad \ov{M(L_0)}=\langle x^*,s^*,z^*,w^*,t^*,y^*\rangle_{\F_2}.$$
		The intersection points $L_0 \cap L_1$ in Figure~\ref{figure:L_0+L_1} correspond to the generators of $M(L_1)_{\A} \boxtimes \dualSmallBarRed \boxtimes {}_{\A}\ov{M(L_0)}$ in the following way:
	
		\begin{align*}
			&w\boxtimes b(i_2)\boxtimes w^* \leftrightarrow w, \\
			&s\boxtimes b(j_2)\boxtimes s^* \leftrightarrow s, \\
			&x\boxtimes b(j_0)\boxtimes x^* \leftrightarrow x, \\
			&z\boxtimes b(i_0)\boxtimes z^* \leftrightarrow z, \\
			&s\boxtimes b(-\rho_2,-\xi_1,-\eta_1)\boxtimes z^*  \leftrightarrow p_3, \\
			&s\boxtimes b(-\rho_2)\boxtimes w^* \leftrightarrow p_9, \\
			&s\boxtimes b(-\xi_2)\boxtimes w^* \leftrightarrow p_{11}, \\
			&s\boxtimes b(-\rho_2,-\xi_1)\boxtimes t^* \leftrightarrow p_6, \\
			&w\boxtimes b(-\xi_1,-\eta_1)\boxtimes z^*  \leftrightarrow p_2, \\
			&w\boxtimes b(-\xi_1)\boxtimes t^* \leftrightarrow p_7, \\
			&x\boxtimes b(-\eta_3)\boxtimes y^* \leftrightarrow p_{12}, \\
			&x\boxtimes b(-\eta_3,-\xi_3,-\rho_2,-\xi_1,-\eta_1)\boxtimes z^* \leftrightarrow p_4, \\
			&x\boxtimes b(-\eta_3,-\xi_3)\boxtimes s^* \leftrightarrow p_{10}, \\
			&x\boxtimes b(-\eta_3,-\xi_3,-\rho_2,-\xi_1)\boxtimes t^* \leftrightarrow p_5, \\
			&x\boxtimes b(-\eta_3,-\xi_3,-\rho_2)\boxtimes w^* \leftrightarrow p_8, \\
			&x\boxtimes b(-\rho_0)\boxtimes z^*  \leftrightarrow p_1. \\
		\end{align*}
	\end{any}

	\begin{any}{(3b)}
		Here we want to show that differentials in $CF_*(L_0,L_1)$ and $M(L_1)_{\A} \boxtimes \dualSmallBarRed \boxtimes {}_{\A}\ov{M(L_0)}$ coincide. We will do it by partitioning both differentials into smaller groups, and showing how smaller groups correspond to each other.

		\begin{lemma}\label{lem:first}
		Every immersed disc contributing to differential in $CF_*(L_0,L_1)$ is contained inside a small neighborhood of one of the big domains $B_1,B_2,B_3,B_4$.
		\end{lemma}
		\begin{proof}
		For an immersed disc to go from one big domain to another big domain, it must pass through an intersection of type $q \boxtimes b(i)\boxtimes k^*$, because the disc is not allowed to touch the $\bdry \P$. Here, in the notation, we use 1-1 correspondence between the generators of $CF_*(L_0,L_1)$ and the generators of $M(L_1)_{\A} \boxtimes \dualSmallBarRed \boxtimes {}_{\A}\ov{M(L_0)}$. Such intersections happen when both $L_0$ and $L_1$ cross the red arc, i.e. they correspond to \say{0 length} paths, see Figure~\ref{figure:near_center}. Also only two opposite parts of the corner $q \boxtimes b(i)\boxtimes k^* \in L_0 \cap L_1$ are allowed to be filled by the disc. Thus the disc cannot pass through such intersection point, as such disc cannot be immersed. %
		\end{proof}
		\begin{remark}
		We need to consider small neighborhoods of the big domains, as intersections of type $q \boxtimes b(i)\boxtimes k^*$ are not happening exactly on the red arc, but rather somewhere close to its center, see Figure~\ref{figure:near_center}.
		\end{remark}

		\begin{lemma}\label{lem:second}
		Every differential in $M(L_1)_{\A} \boxtimes \dualSmallBarRed \boxtimes {}_{\A}\ov{M(L_0)}$ contains an $A_\infty$ action either on the $M(L_1)$ side (Figure~\ref{figure:diff_1type}), or on the $\ov{M(L_0)}$ side (Figure~\ref{figure:diff_2type}), but not on both sides. Moreover, every such $A_\infty$ action comes from a \say{basic} disc in the definition of $M(L_i)_{\A}$, i.e. a disc contained entirely in one of the big domains.
		\end{lemma}
		\begin{proof}
		The first observation is that $\dualSmallBarRed$ does not have differentials with algebra elements outgoing on both sides:
		$$
		\begin{tikzpicture}[scale=0.4]
			\node[](in1) at (1.4,2){};
			\node[circle,fill,scale=0.3](diff) at (1.4,1.2){};
			\node[](out0) at (0,0){};
			\node[](out1) at (1.4,0){};
			\node[](out2) at (3,0){};
			\draw[arrow] (in1) to (out1);
			\draw[arrow] (diff) to[bend left] (out2);
			\draw[arrow] (diff) to[bend right] (out0);
		\end{tikzpicture}
		$$
		This implies that the chain complex structure does not depend on the brackets placement:
		$$(M(L_1)_{\A} \boxtimes \dualSmallBarRed) \boxtimes {}_{\A}\ov{M(L_0)} =M(L_1)_{\A} \boxtimes (\dualSmallBarRed \boxtimes {}_{\A}\ov{M(L_0)}) =M(L_1)_{\A} \boxtimes \dualSmallBarRed \boxtimes {}_{\A}\ov{M(L_0)}.$$ 
		Usually only the homotopy type of the box tensor product does not depend on the brackets placement. Also, it implies that the differential is either on the $M(L_1)$ side (Figure~\ref{figure:diff_1type}), or on the $\ov{M(L_0)}$ side (Figure~\ref{figure:diff_2type}). We call them the 1st and the 2nd types of differentials.

		The second observation is that the differentials in $\dualSmallBarRed$ do not contain outgoing algebra elements of chord length more than one (an example of chord length two algebra element is $\xi_{12}$). This observation implies the last statement of the lemma.
		\end{proof}

		Now let us take one connected segment $l_0$ of $L_0$, which is cut out by a small neighborhood of a big domain  $N(B_k)$. And also we take one connected segment $l_1$ of $L_1$, which is cut out by the same neighborhood $N(B_k)$. These segments almost coincide with two of the segments from the division of $L_0$ and $L_1$ by the intersections with red arcs. Their behavior inside $N(B_k)$ is completely described by our isotopy in step (1). Also note, that such segments correspond to basic discs in the definition of $M(L_i)_\A$. These basic discs 
		have a chance to contribute to the differential in $M(L_1)_{\A} \boxtimes \dualSmallBarRed \boxtimes {}_{\A}\ov{M(L_0)}$.

		Due to Lemma~\ref{lem:first}, the differential in $CF_*(L_0,L_1)$ is partitioned into differentials with boundaries on segments $l_0,l_1$. We will denote such groups of differentials by $CF_*(l_0,l_1)$. Due to Lemma~\ref{lem:second}, the differential in $M(L_1)_{\A} \boxtimes \dualSmallBarRed \boxtimes {}_{\A}\ov{M(L_0)}$ is partitioned into differentials using different basic discs. We will denote such groups of differentials by $M(l_1)_{\A} \boxtimes \dualSmallBarRed \boxtimes {}_{\A}\ov{M(l_0)}$, as basic discs correspond to segments.

		We are left to show how differentials in $CF_*(l_0,l_1)$ correspond to differentials in $M(l_1)_{\A} \boxtimes \dualSmallBarRed \boxtimes {}_{\A}\ov{M(l_0)}$. We will do it by considering the 1st and the 2nd type of differentials in $M(l_1)_{\A} \boxtimes \dualSmallBarRed \boxtimes {}_{\A}\ov{M(l_0)}$ separately. 
		\newline \newline
		\underline{The 1st type.} In this case the differential $M(l_1)_{\A} \boxtimes \dualSmallBarRed \boxtimes {}_{\A}\ov{M(l_0)}$ has outgoing algebra elements on the left, see Figure~\ref{figure:diff_1type}. This corresponds to prolongation of the path in the backward direction, i.e. new length one chords are concatenated to the path from the left. We use the following notation in Figure~\ref{figure:diff_1type}: $i,k$ are the red arcs intersecting $l_1$ at $(u_1)$ and $(u_2)$, $j$ is the red arc intersecting $l_0$ at $(v)$, and $\gamma_m$ represent chord length one elements of $\A$.

		\begin{figure}[!ht]
		\centering
		\begin{tikzpicture}[scale=1.4]
			\node[](in0) at (-1.5,4){${(u_1)}_i$};
			\node[](in1) at (1,4){$_i{b(path)}_j$};
			\node[](in2) at (3.5,4){$_j{(v)}$};

			\node[]() at (-0.25,4){$\boxtimes$};
			\node[]() at (2.25,4){$\boxtimes$};
			\node[]() at (-1,0){$\boxtimes$};
			\node[]() at (3,0){$\boxtimes$};
			
			\node[](out0) at (-1.5,0){${(u_2)}_k$ };
			\node[](out1) at (1,0){$_k{b(-\gamma_m , \ldots , -\gamma_2,  -\gamma_1, path)}_j$};
			\node[](out2) at (3.5,0){$_j{(v)}$};
			
			\draw[arrow] (in0) to (out0);
			\draw[arrow] (in1) to (out1);
			\draw[arrow] (in2) to (out2);

			\node[circle,fill,scale=0.3](ldiff) at (-1.5,0.9){};
			\node[circle,fill,scale=0.3](rdiff1) at (1,3.3){};
			\node[circle,fill,scale=0.3](rdiff2) at (1,2.4){};
			\node[circle,fill,scale=0.3](rdiff3) at (1,1.6){};
			\node[](dots) at (0.8,1.9){$\vdots$};
			\draw[] (rdiff1) to node[above,sloped, pos=0.35]{$\gamma_1$}  (ldiff);
			\draw[] (rdiff2) to node[above,sloped, pos=0.35]{$\gamma_2$} (ldiff);
			\draw[] (rdiff3) to node[below,sloped, pos=0.4]{$\gamma_m$} (ldiff);
		\end{tikzpicture}
		\caption{The 1st type of differentials in $M(L_1)_{\A} \boxtimes \dualSmallBarRed \boxtimes {}_{\A}\ov{M(L_0)}$.}
		\label{figure:diff_1type}
		\end{figure}

		Let us describe the corresponding disc differentials in $CF(l_0,l_1)$. Suppose the disc goes from $p$ to $q$. First, note that all intersections $l_0 \cap l_1$ are happening near one of the two ends of segment $l_0$. Points $p$ and $q$ can be on one end of the segment $l_0$, or on the different ends. Let us consider those pairs of point $p$ and $q$, which are on one end of the segment $l_0$. For this to happen, traversing the $l_1$ boundary of the disc, $l_1$ must pass the $360^\circ$ rotation point and come back. See, for example, the disc from $p_6$ to $p_5$ in Figure~\ref{figure:L_0+L_1}. We say that such differentials are of the 1st type in $CF(l_0,l_1)$. 

		These are precisely the discs that correspond to the 1st type of differentials in $M(l_1)_{\A} \boxtimes \dualSmallBarRed \boxtimes {}_{\A}\ov{M(l_0)}$. The reason is that both the 1st type of differentials in $M(l_1)_{\A} \boxtimes \dualSmallBarRed \boxtimes {}_{\A}\ov{M(l_0)}$, and the 1st type of differentials in $CF(l_0,l_1)$ exist if and only if $(u_2)_k$ is before $(u_1)_i$ is before $_j(v)$ with respect to the basepoint and the direction against the natural orientation of $\bdry B_i$.
		For example, in Figure~\ref{figure:L_0+L_1}, the disc from $p_6$ to $p_5$ corresponds to differential $s\boxtimes b(-\rho_2,-\xi_1)\boxtimes t^* \rightarrow x\boxtimes b(-\eta_3,-\xi_3,-\rho_2,-\xi_1)\boxtimes t^*$.
		\newline \newline
		\underline{The 2nd type.}
		In this case the differential $M(l_1)_{\A} \boxtimes \dualSmallBarRed \boxtimes {}_{\A}\ov{M(l_0)}$ has outgoing algebra elements on the right, see Figure~\ref{figure:diff_2type}. This corresponds to prolongation of the path in the forward direction, i.e. new length one chords are concatenated to the path from the right. We use the following notation in Figure~\ref{figure:diff_2type}: $i$ is the red arc intersecting $l_1$ at $(u)$, $j$, $o$ are the red arcs intersecting $l_0$ at $(v_1)$ and $(v_2)$, and $\gamma_m$ represent chord length one elements of $\A$.

		\begin{figure}[!ht]
		\centering
		\begin{tikzpicture}[scale=1.4]
			\node[](in0) at (-1.5,4){${(u)}_i$};	
			\node[](in1) at (1,4){$_i{b(path)}_j$};
			\node[](in2) at (3.5,4){$_j{(v_1)}$};

			\node[]() at (-0.25,4){$\boxtimes$};
			\node[]() at (2.25,4){$\boxtimes$};
			\node[]() at (-1,0){$\boxtimes$};
			\node[]() at (3,0){$\boxtimes$};
			
			\node[](out0) at (-1.5,0){${(u)}_i$ };
			\node[](out1) at (1,0){$_i{b( path,-\gamma_1, -\gamma_2, \ldots,-\gamma_m)}_o$};
			\node[](out2) at (3.5,0){$_o{(v_2)}$};
			
			\draw[arrow] (in0) to (out0);
			\draw[arrow] (in1) to (out1);
			\draw[arrow] (in2) to (out2);

			\node[circle,fill,scale=0.3](ldiff) at (3.5,0.9){};
			\node[circle,fill,scale=0.3](rdiff1) at (1,3.3){};
			\node[circle,fill,scale=0.3](rdiff2) at (1,2.4){};
			\node[circle,fill,scale=0.3](rdiff3) at (1,1.6){};
			\node[](dots) at (1.2,1.9){$\vdots$};
			\draw[] (rdiff1) to node[above,sloped, pos=0.35]{$\gamma_1$}  (ldiff);
			\draw[] (rdiff2) to node[above,sloped, pos=0.35]{$\gamma_2$} (ldiff);
			\draw[] (rdiff3) to node[below,sloped, pos=0.4]{$\gamma_m$} (ldiff);
		\end{tikzpicture}
		\caption{The 2nd type of differentials in $M(L_1)_{\A} \boxtimes \dualSmallBarRed \boxtimes {}_{\A}\ov{M(L_0)}$.}
		\label{figure:diff_2type}
		\end{figure}
		The corresponding 2nd type of disc differentials in $CF(l_0,l_1)$ are those, which have their corners on two different ends of segment $l_0$. They do not pass through the $360^\circ$ rotation point of $l_1$, but instead they pass through the center of the big domain, see the disc from $p_{10}$ to $p_5$ in Figure~\ref{figure:L_0+L_1}. 

		Both the 2nd type of differentials in $M(l_1)_{\A} \boxtimes \dualSmallBarRed \boxtimes {}_{\A}\ov{M(l_0)}$, and the 2nd type of differentials in $CF(l_0,l_1)$ exist if and only if $(u)_i$ is before $_j{(v_1)}$ is before $_o{(v_2)}$ with respect to the basepoint and the direction against the natural orientation of $\bdry B_i$. For example, in Figure~\ref{figure:L_0+L_1}, the disc from $p_{10}$ to $p_5$ corresponds to the differential $ x\boxtimes b(-\eta_3,-\xi_3)\boxtimes s^* \rightarrow x\boxtimes b(-\eta_3,-\xi_3,-\rho_2,-\xi_1)\boxtimes t^*$.
	\end{any}
	\end{proof}

\Needspace{8\baselineskip}
\section{Modules associated to tangles}
	Here we describe examples of modules $M(L_{(K,\S,\pi)})_{\A}$ associated to tangles $K \setminus (A_1 \cup A_2)$. We use calculations from \cite[Sections 7,11]{HHK2} to get immersed curves $L_{(K,\S,\pi)}$ in the pillowcase, see Figure~\ref{figure:curves}.  Also, for the sake of readability of this section we will abuse notation and write $L_{K}$ instead of $L_{(K,\S,\pi)}$.

	\begin{example}[Trivial tangle to pair with]\label{example:triv_tangle}
	First we consider a trivial tangle $A_1 \cup A_2$ inside the Conway sphere, see the left picture of the second row in Figure~\ref{figure:strategy}. For that tangle (decorated with an additional arc and a circle to avoid reducibles) we associate a curve $L^{\natural}$ in Figure~\ref{figure:curve}, and to that curve we associate a module described in Figure~\ref{figure:module}. 

	Because in the algebraic pairing $M(L_K)_{\A} \boxtimes \dualSmallBarRed \boxtimes {}_{\A}\ov{M(L^{\natural})}$ there is a dual module $_{\A}\ov{M(L^{\natural})}$ involved, we describe it here:

	\begin{figure}[!ht]
	\centering %
	\begin{tikzpicture}[scale=0.4,baseline=1.5cm]
		\foreach \a/\b in {1/w*, 2/z*, 3/x*, 4/y*, 5/t*, 6/s*}{
		\draw (\a*360/6: 4cm) node(\b){\b};
		}

		\draw[arrow] (x*) to node[left]{$\rho_0$} (z*);
		\draw[arrow] (x*) to node[left]{$\eta_3$} (y*);
		\draw[arrow] (y*) to node[below]{$\eta_2$} (t*);
		\draw[arrow] (s*) to node[right]{$(\xi_1,\rho_2)$} (t*);
		\draw[arrow] (s*) to node[right]{$\xi_2$} (w*);
		\draw[arrow] (w*) to node[above]{$(\eta_1,\xi_1)$} (z*);

		\draw[arrow] (s*) to node[below]{$(\eta_1,\xi_{12})$} (z*);
		\draw[arrow] (x*) to node[above]{$\eta_{23}$} (t*);
		
	\end{tikzpicture}
	\caption{$_{\A}\ov{M(L^{\natural})}$}
	\label{figure:dual_module}
	\end{figure}

	\end{example} 

	In the next examples we also compute pillowcase homology via the algebraic pairing, using computer program \cite{Pyt} for box tensor product of modules. Another way to see the chain complex and the differentials is to isotope $L_{K}$ as in the proof of Theorem~\ref{pairing theorem}, and then to use the Lagrangian Floer chain complex $CF_*(L^{\natural},L_{K})$.

	\begin{example}[The unknot]\label{example:unknot}
	The next example is a trivial knot tangle $U-A_1-A_2$. Depending on how we pick the second tangle for the trivial knot (the first tangle is $A_1 \cup A_2$), the resulting curve on the pillowcase can be different. In the two simplest cases it can be an arc $\{\gamma=\pi\}$ (in case the second tangle looks like a crossing $\times$), or an arc $\{\theta=0\}$ (in case the  second tangle is horizontal smoothing of that crossing). Note that we cannot pick a vertical smoothing $)($ of a crossing, as it results in two circles if paired with $A_1 \cup A_2$.  On the left of Figure~\ref{figure:curves} we depicted an arc $\{\theta=0\}=L_U$ for a crossing $\times$. 

	The corresponding module $M(L_U)_{\A}$ is $q_{j_1}\xrightarrow{\eta_3}p_{j_0}$. The algebraic pairing chain complex $M(L_U)_{\A} \boxtimes \dualSmallBarRed \boxtimes {}_{\A}\ov{M(L^{\natural})}$ has 13 generators and 12 differentials. Pillowcase homology then has rank one: $HF_*(L^{\natural},L_U)=H_*(M(L_U)_{\A} \boxtimes \dualSmallBarRed \boxtimes {}_{\A}\ov{M(L^{\natural})})=\F_2$, which coincides with singular instanton knot homology $I^{\natural}(U)$.

	\end{example}
	\begin{example}[$T(2,3)$]
	An immersed curve for the right-handed trefoil is depicted on the left of Figure~\ref{figure:curves}. 

	The corresponding module $M(L_{T(2,3)})_{\A}$ has generators:
	$$u_{i_0},e_{j_1},v_{j_1},q_{i_1},$$
	and actions:
	$$u \otimes (\eta_1,\xi_1,\rho_2,\xi_3) \longrightarrow e, \ q \otimes (\eta_2) \longrightarrow e, \ q \otimes (\xi_1,\rho_2,\xi_3) \longrightarrow v.$$
	The algebraic pairing chain complex $M(L_U)_{\A} \boxtimes \dualSmallBarRed \boxtimes {}_{\A}\ov{M(L^{\natural})}$ has 15 generators and 10 differentials. Pillowcase homology then has rank three: $HF_*(L^{\natural},L_{T(2,3)})=H_*(M(L_{T(2,3)})_{\A} \boxtimes \dualSmallBarRed \boxtimes {}_{\A}\ov{M(L^{\natural})})=(\F_2)^3$, which coincides with singular instanton knot homology $I^{\natural}(T(2,3))$.
	
	\end{example}

	In the next three examples immersed curves are unions of curves $R_0,R_1,R_3,R_4$, see the right of Figure~\ref{figure:curves}. Notice that $R_3$ differs from $R_0$ by a twist around the boundary, and thus their pairings with $L^{\natural}$ are the same (because  $L^{\natural}$ is not an arc).  We describe the modules $M(L_{R_{i}})_\A$ for $i=0,1,4$ in the appendix. Using those modules we compute three more examples for tangles:

	\begin{example}[$T(3,7)$]
	The corresponding immersed curve is depicted on the right of Figure~\ref{figure:curves}.

	The corresponding module is 
	$M(L_{T(3,7)})= M(R_0) \oplus M(R_1) \oplus M(R_1)$. Pillowcase homology then has rank 9: $HF_*(L^{\natural},L_{T(3,7)})=H_*(M(L_{T(3,7)})_{\A} \boxtimes \dualSmallBarRed \boxtimes {}_{\A}\ov{M(L^{\natural})})=(\F_2) \oplus (\F_2)^4 \oplus (\F_2)^4 =(\F_2)^9$, which coincides with singular instanton knot homology $I^{\natural}(T(3,7))$.
	\end{example}

	\begin{example}[$T(5,11)$]
	The corresponding immersed curve is depicted on the right of Figure~\ref{figure:curves}.

	The corresponding module is $M(L_{T(5,11)})  =  M(R_0) \oplus M(R_1) \oplus M(R_1) \oplus M(R_4) \oplus M(R_4) $. Pillowcase homology then has rank 17: $HF_*(L^{\natural},L_{T(5,11)})=H_*(M(L_{T(5,11)})_{\A} \boxtimes \dualSmallBarRed \boxtimes {}_{\A}\ov{M(L^{\natural})})=(\F_2) \oplus (\F_2)^4 \oplus (\F_2)^4 \oplus (\F_2)^4 \oplus (\F_2)^4 =(\F_2)^{17}$. 

	Singular instanton Floer homology is not known for $T(5,11)$.
	\end{example}

	\begin{example}[$T(3,4)$]
	The corresponding immersed curve is depicted on the right of Figure~\ref{figure:curves}. This is an example where we actually need to perturb $L_{T(3,4)}$ in order to get an immersed 1-manifold.

	The corresponding module is $M(L_{T(3,4)})= M(R_1) \oplus M(L_{R_3})$. Pillowcase homology then has rank 5: $HF_*(L^{\natural},L_{T(3,4)})=H_*(M(L_{T(3,4)})_{\A} \boxtimes \dualSmallBarRed \boxtimes {}_{\A}\ov{M(L^{\natural})})=(\F_2)^4 \oplus \F_2=(\F_2)^5$, which coincides with singular instanton knot homology $I^{\natural}(T(3,4))$.
	\end{example}

	See \cite{HHK2} and \cite{FKP} for other examples of immersed curves associated to tangles.
	
	\begin{figure}[h]
	\includegraphics[width=1.2\textwidth]{./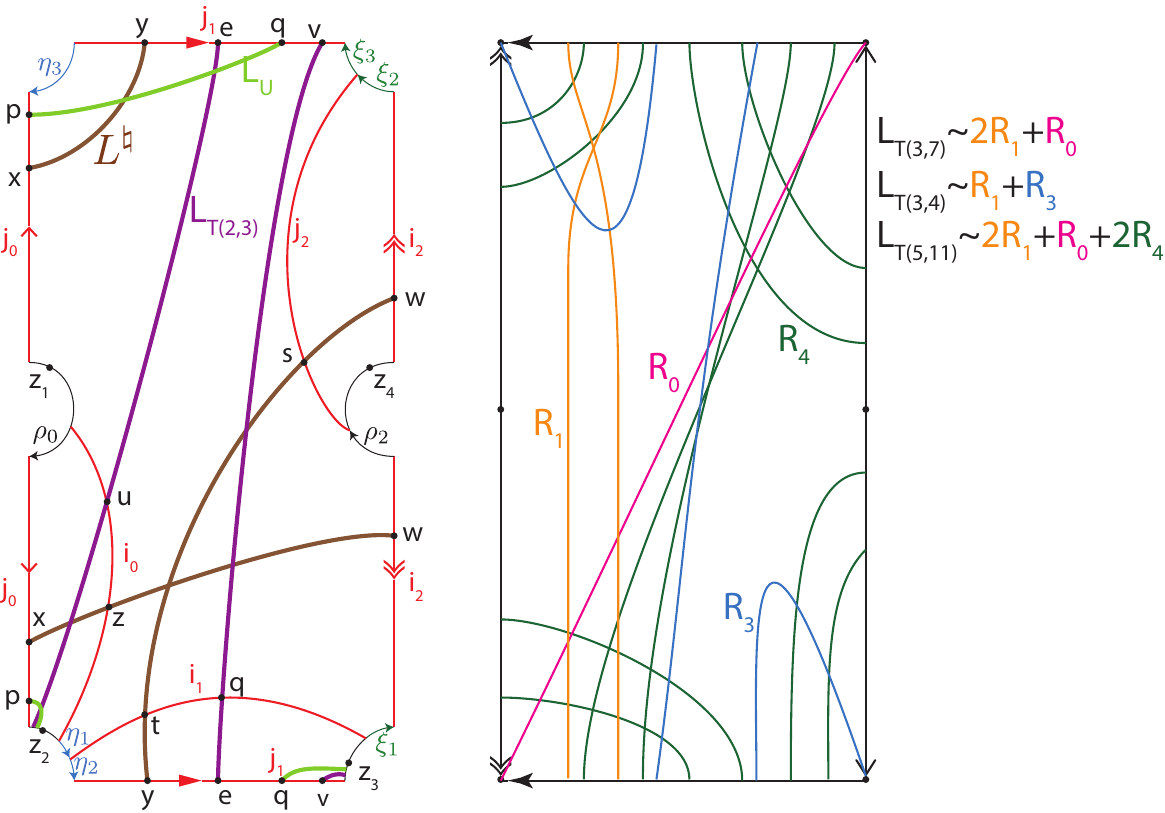}
	\caption{Different immersions associated to tangles. $L^{\natural}$ denotes an immersed curve associated to trivial tangle consisting of two arcs $A_1,A_2$, see Figure~\ref{figure:strategy}. $L_{K}$ denotes an immersed curve associated to tangle $K\setminus(A_1 \cup A_2).$}
	\label{figure:curves}
	\end{figure}

\Needspace{8\baselineskip}
\section{Appendix}
	{
	\setlength{\parindent}{0pt}
		\begin{any}{Module}

		$M(R_0)_\A$ 

		\vspace{0.2cm}
		4 generators with their idempotents:

		$a_{j_1}$,$c_{j_1}$,$b_{i_1}$,$d_{j_0}$.

		\vspace{0.2cm}

		Actions:

		$a\otimes \eta_3\longrightarrow d$,
		$b\otimes \eta_{23}\longrightarrow d$,
		$b\otimes (\xi_1, \rho_2, \xi_3) \longrightarrow  c$,
		$b\otimes \eta_2 \longrightarrow  a$
		.

		Algebraic pairing with the trivial tangle module:
		$$H_*(M(R_0)_{\A} \boxtimes \dualSmallBarRed \boxtimes {}_{\A}\ov{M(L^{\natural})})=\F_2.$$
		\end{any}

		\begin{any}{Module}

		$M(R_1)_\A$ 

		\vspace{0.2cm}
		4 generators with their idempotents:

		$x1_{j_1}$,$y1_{j_1}$,$z1_{i_1}$,$t1_{i_1}$.

		\vspace{0.2cm}

		Actions:
		$z1\otimes (\xi_1, \rho_2, \xi_3)\longrightarrow  y1$,
		$t1\otimes \eta_2\longrightarrow  y1$,
		$z1\otimes \eta_2\longrightarrow  x1$,
		$t1\otimes (\xi_1, \rho_2, \xi_3)\longrightarrow  x1$
		.

		Algebraic pairing with the trivial tangle module:
		$$H_*(M(R_1)_{\A} \boxtimes \dualSmallBarRed \boxtimes {}_{\A}\ov{M(L^{\natural})})=(\F_2)^4.$$
		\end{any}

		\begin{any}{Module}

		$M(R_4)_\A$ 

		\vspace{0.2cm}
		4 generators with their idempotents:

		$a_{i_0}$,
		$c_{i_1}$,
		$b_{i_0}$,
		$e_{i_1}$,
		$d_{i_1}$,
		$g_{i_1}$,
		$h_{i_1}$,
		$m_{j_1}$,
		$l_{j_1}$,
		$q_{j_2}$,
		$p_{j_2}$,
		$s_{i_2}$,
		$r_{i_1}$,
		$u_{j_1}$,
		$t_{i_2}$,
		$w_{j_0}$,
		$v_{j_1}$,
		$y_{j_1}$,
		$x_{j_1}$,
		$z_{j_0}$.

		\vspace{0.2cm}

		Actions:

		$a\otimes \eta_3\longrightarrow d$,
		$b\otimes \eta_{23}\longrightarrow d$,
		$b\otimes (\xi_1, \rho_2, \xi_3) \longrightarrow  c$,
		$b\otimes \eta_2 \longrightarrow  a$
		
		$p \otimes \xi_3 \longrightarrow u$,
		$t \otimes \xi_2 \longrightarrow q$,
		$d \otimes (\xi_1, \rho_2, \xi_3) \longrightarrow l$,
		$y \otimes \eta_3 \longrightarrow z$,
		$a \otimes \eta_1 \longrightarrow g$,
		$s \otimes \xi_2 \longrightarrow p$,
		$c \otimes \eta_2 \longrightarrow x$,
		$r \otimes \xi_{123} \longrightarrow u$,
		$e \otimes \eta_2 \longrightarrow m$,
		$d \otimes \eta_2 \longrightarrow y$,
		$s \otimes \xi_{23} \longrightarrow u$,
		$h \otimes \eta_2 \longrightarrow v$,
		$c \otimes \eta_{23} \longrightarrow w$,
		$r \otimes \xi_12 \longrightarrow p$,
		$q \otimes \xi_3 \longrightarrow v$,
		$r \otimes \xi_1 \longrightarrow s$,
		$a \otimes \rho_0 \longrightarrow w$,
		$t \otimes \xi_{23} \longrightarrow v$,
		$b \otimes \eta_1 \longrightarrow h$,
		$b \otimes \eta_12 \longrightarrow v$,
		$d \otimes \eta_{23} \longrightarrow z$,
		$r \otimes \eta_2 \longrightarrow l$,
		$x \otimes \eta_3 \longrightarrow w$,
		$c \otimes (\xi_1, \rho_2, \xi_3) \longrightarrow m$,
		$e \otimes \xi_12 \longrightarrow q$,
		$a \otimes \eta_12 \longrightarrow u$,
		$b \otimes \rho_0 \longrightarrow z$,
		$e \otimes \xi_123 \longrightarrow v$,
		$g \otimes \eta_2 \longrightarrow u$,
		$e \otimes \xi_1 \longrightarrow t$
		.

		Algebraic pairing with the trivial tangle module:
		$$H_*(M(R_4)_{\A} \boxtimes \dualSmallBarRed \boxtimes {}_{\A}\ov{M(L^{\natural})})=(\F_2)^4.$$
		\end{any}

		\begin{any}{Bimodule}\label{description:dual_small_bar}

		$\dualSmallBar$

		\vspace{0.2cm}
		56 generators with their idempotents:

		$_{j_1}{(b(\xi'_{32},\xi'_1))}_{i_1}$,
		$_{j_0}{(b(\rho'_0))}_{i_0}$,
		$_{j_1}{(b(\xi'_3,\xi'_2))}_{i_2}$,
		$_{j_0}{(b(\eta'_3,\xi'_3,\rho'_2,\xi'_1))}_{i_1}$,
		$_{j_0}{(b(\eta'_3,\xi'_{32}))}_{i_2}$,
		$_{j_0}{(b(\eta'_3,\xi'_{32},\xi'_1,\eta'_1))}_{i_0}$,
		$_{j_0}{(b(\eta'_3,\xi'_3,\xi'_2))}_{i_2}$,
		$_{j_1}{(b(\eta'_{21}))}_{i_0}$,
		$_{j_0}{(b(\eta'_3,\xi'_3,\xi'_2,\xi'_1,\eta'_1))}_{i_0}$,
		$_{j_2}{(b(\xi'_2))}_{i_2}$,
		$_{j_2}{(b(\xi'_2,\xi'_1,\eta'_1))}_{i_0}$,
		$_{j_1}{(b(\xi'_{32}))}_{i_2}$,
		$_{j_2}{(b(\xi'_{21},\eta'_1))}_{i_0}$,
		$_{j_0}{(b(\eta'_{321}))}_{i_0}$,
		$_{j_2}{(b(\rho'_2,\xi'_1,\eta'_1))}_{i_0}$,
		$_{j_0}{(b(\eta'_3))}_{j_1}$,
		$_{j_1}{(b(\xi'_3,\xi'_2,\xi'_1,\eta'_1))}_{i_0}$,
		$_{j_1}{(b(\xi'_{321},\eta'_1))}_{i_0}$,
		$_{i_2}{(b(i_2'))}_{i_2}$,
		$_{j_0}{(b(\eta'_3,\xi'_3,\rho'_2,\xi'_1,\eta'_1))}_{i_0}$,
		$_{j_1}{(b(\xi'_3,\xi'_{21}))}_{i_1}$,
		$_{j_1}{(b(j_1'))}_{j_1}$,
		$_{j_0}{(b(\eta'_3,\xi'_3,\xi'_{21}))}_{i_1}$,
		$_{j_2}{(b(j_2'))}_{j_2}$,
		$_{j_0}{(b(j_0'))}_{j_0}$,
		$_{j_0}{(b(\eta'_3,\eta'_2))}_{i_1}$,
		$_{j_1}{(b(\xi'_3,\rho'_2,\xi'_1,\eta'_1))}_{i_0}$,
		$_{i_1}{(b(i_1'))}_{i_1}$,
		$_{j_2}{(b(\xi'_{21}))}_{i_1}$,
		$_{j_2}{(b(\rho'_2,\xi'_1))}_{i_1}$,
		$_{i_0}{(b(i_0'))}_{i_0}$,
		$_{i_2}{(b(\xi'_1))}_{i_1}$,
		$_{i_1}{(b(\eta'_1))}_{i_0}$,
		$_{j_1}{(b(\xi'_3))}_{j_2}$,
		$_{j_0}{(b(\eta'_3,\xi'_{321}))}_{i_1}$,
		$_{j_1}{(b(\xi'_3,\xi'_{21},\eta'_1))}_{i_0}$,
		$_{j_0}{(b(\eta'_{32}))}_{i_1}$,
		$_{j_2}{(b(\rho'_2))}_{i_2}$,
		$_{j_0}{(b(\eta'_3,\eta'_{21}))}_{i_0}$,
		$_{j_0}{(b(\eta'_3,\eta'_2,\eta'_1))}_{i_0}$,
		$_{j_0}{(b(\eta'_3,\xi'_3,\rho'_2))}_{i_2}$,
		$_{j_1}{(b(\xi'_{321}))}_{i_1}$,
		$_{j_2}{(b(\xi'_2,\xi'_1))}_{i_1}$,
		$_{j_1}{(b(\xi'_3,\rho'_2,\xi'_1))}_{i_1}$,
		$_{j_0}{(b(\eta'_3,\xi'_3,\xi'_{21},\eta'_1))}_{i_0}$,
		$_{j_0}{(b(\eta'_{32},\eta'_1))}_{i_0}$,
		$_{j_1}{(b(\xi'_3,\rho'_2))}_{i_2}$,
		$_{j_0}{(b(\eta'_3,\xi'_{32},\xi'_1))}_{i_1}$,
		$_{j_1}{(b(\eta'_2))}_{i_1}$,
		$_{j_0}{(b(\eta'_3,\xi'_3,\xi'_2,\xi'_1))}_{i_1}$,
		$_{j_1}{(b(\eta'_2,\eta'_1))}_{i_0}$,
		$_{j_0}{(b(\eta'_3,\xi'_{321},\eta'_1))}_{i_0}$,
		$_{j_0}{(b(\eta'_3,\xi'_3))}_{j_2}$,
		$_{j_1}{(b(\xi'_{32},\xi'_1,\eta'_1))}_{i_0}$,
		$_{i_2}{(b(\xi'_1,\eta'_1))}_{i_0}$,
		$_{j_1}{(b(\xi'_3,\xi'_2,\xi'_1))}_{i_1}$.

		\vspace{0.2cm}

		Actions:

		$b(\xi'_{32},\xi'_1)\longrightarrow \eta_3 \otimes b(\eta'_3,\xi'_{32},\xi'_1) \otimes 1 $, 
		$b(\xi'_{32},\xi'_1)\longrightarrow 1 \otimes b(\xi'_{32},\xi'_1,\eta'_1) \otimes \eta_1 $, 
		$b(\xi'_{32},\xi'_1)\longrightarrow 1 \otimes b(\xi'_3,\xi'_2,\xi'_1) \otimes 1 $, 
		$b(\xi'_3,\xi'_2)\longrightarrow \eta_3 \otimes b(\eta'_3,\xi'_3,\xi'_2) \otimes 1 $, 
		$b(\xi'_3,\xi'_2)\longrightarrow 1 \otimes b(\xi'_3,\xi'_2,\xi'_1) \otimes \xi_1 $, 
		$b(\eta'_3,\xi'_3,\rho'_2,\xi'_1)\longrightarrow 1 \otimes b(\eta'_3,\xi'_3,\rho'_2,\xi'_1,\eta'_1) \otimes \eta_1 $, 
		$b(\eta'_3,\xi'_{32})\longrightarrow 1 \otimes b(\eta'_3,\xi'_3,\xi'_2) \otimes 1 $, 
		$b(\eta'_3,\xi'_{32})\longrightarrow 1 \otimes b(\eta'_3,\xi'_{32},\xi'_1) \otimes \xi_1 $, 
		$b(\eta'_3,\xi'_{32},\xi'_1,\eta'_1)\longrightarrow 1 \otimes b(\eta'_3,\xi'_3,\xi'_2,\xi'_1,\eta'_1) \otimes 1 $, 
		$b(\eta'_3,\xi'_3,\xi'_2)\longrightarrow 1 \otimes b(\eta'_3,\xi'_3,\xi'_2,\xi'_1) \otimes \xi_1 $, 
		$b(\eta'_{21})\longrightarrow \eta_3 \otimes b(\eta'_3,\eta'_{21}) \otimes 1 $, 
		$b(\eta'_{21})\longrightarrow 1 \otimes b(\eta'_2,\eta'_1) \otimes 1 $, 
		$b(\xi'_2)\longrightarrow \xi_3 \otimes b(\xi'_3,\xi'_2) \otimes 1 $, 
		$b(\xi'_2)\longrightarrow 1 \otimes b(\xi'_2,\xi'_1) \otimes \xi_1 $, 
		$b(\xi'_2,\xi'_1,\eta'_1)\longrightarrow \xi_3 \otimes b(\xi'_3,\xi'_2,\xi'_1,\eta'_1) \otimes 1 $, 
		$b(\xi'_{32})\longrightarrow 1 \otimes b(\xi'_{32},\xi'_1) \otimes \xi_1 $, 
		$b(\xi'_{32})\longrightarrow 1 \otimes b(\xi'_3,\xi'_2) \otimes 1 $, 
		$b(\xi'_{32})\longrightarrow \eta_3 \otimes b(\eta'_3,\xi'_{32}) \otimes 1 $, 
		$b(\xi'_{21},\eta'_1)\longrightarrow 1 \otimes b(\xi'_2,\xi'_1,\eta'_1) \otimes 1 $, 
		$b(\xi'_{21},\eta'_1)\longrightarrow \xi_3 \otimes b(\xi'_3,\xi'_{21},\eta'_1) \otimes 1 $, 
		$b(\eta'_{321})\longrightarrow 1 \otimes b(\eta'_3,\eta'_{21}) \otimes 1 $, 
		$b(\eta'_{321})\longrightarrow 1 \otimes b(\eta'_{32},\eta'_1) \otimes 1 $, 
		$b(\rho'_2,\xi'_1,\eta'_1)\longrightarrow \xi_3 \otimes b(\xi'_3,\rho'_2,\xi'_1,\eta'_1) \otimes 1 $, 
		$b(\eta'_3)\longrightarrow 1 \otimes b(\eta'_3,\xi'_{32}) \otimes \xi_{23} $, 
		$b(\eta'_3)\longrightarrow 1 \otimes b(\eta'_3,\eta'_2) \otimes \eta_2 $, 
		$b(\eta'_3)\longrightarrow 1 \otimes b(\eta'_3,\xi'_{321}) \otimes \xi_{123} $, 
		$b(\eta'_3)\longrightarrow 1 \otimes b(\eta'_3,\eta'_{21}) \otimes \eta_{12} $, 
		$b(\eta'_3)\longrightarrow 1 \otimes b(\eta'_3,\xi'_3) \otimes \xi_3 $, 
		$b(\xi'_3,\xi'_2,\xi'_1,\eta'_1)\longrightarrow \eta_3 \otimes b(\eta'_3,\xi'_3,\xi'_2,\xi'_1,\eta'_1) \otimes 1 $, 
		$b(\xi'_{321},\eta'_1)\longrightarrow 1 \otimes b(\xi'_3,\xi'_{21},\eta'_1) \otimes 1 $, 
		$b(\xi'_{321},\eta'_1)\longrightarrow \eta_3 \otimes b(\eta'_3,\xi'_{321},\eta'_1) \otimes 1 $, 
		$b(\xi'_{321},\eta'_1)\longrightarrow 1 \otimes b(\xi'_{32},\xi'_1,\eta'_1) \otimes 1 $, 
		$b(i_2')\longrightarrow \xi_2 \otimes b(\xi'_2) \otimes 1 $, 
		$b(i_2')\longrightarrow \xi_{23} \otimes b(\xi'_{32}) \otimes 1 $, 
		$b(i_2')\longrightarrow 1 \otimes b(\xi'_1) \otimes \xi_1 $, 
		$b(i_2')\longrightarrow \rho_2 \otimes b(\rho'_2) \otimes 1 $, 
		$b(\xi'_3,\xi'_{21})\longrightarrow \eta_3 \otimes b(\eta'_3,\xi'_3,\xi'_{21}) \otimes 1 $, 
		$b(\xi'_3,\xi'_{21})\longrightarrow 1 \otimes b(\xi'_3,\xi'_{21},\eta'_1) \otimes \eta_1 $, 
		$b(\xi'_3,\xi'_{21})\longrightarrow 1 \otimes b(\xi'_3,\xi'_2,\xi'_1) \otimes 1 $, 
		$b(j_1')\longrightarrow 1 \otimes b(\eta'_{21}) \otimes \eta_{12} $, 
		$b(j_1')\longrightarrow 1 \otimes b(\xi'_{32}) \otimes \xi_{23} $, 
		$b(j_1')\longrightarrow \eta_3 \otimes b(\eta'_3) \otimes 1 $, 
		$b(j_1')\longrightarrow 1 \otimes b(\xi'_3) \otimes \xi_3 $, 
		$b(j_1')\longrightarrow 1 \otimes b(\xi'_{321}) \otimes \xi_{123} $, 
		$b(j_1')\longrightarrow 1 \otimes b(\eta'_2) \otimes \eta_2 $, 
		$b(\eta'_3,\xi'_3,\xi'_{21})\longrightarrow 1 \otimes b(\eta'_3,\xi'_3,\xi'_{21},\eta'_1) \otimes \eta_1 $, 
		$b(\eta'_3,\xi'_3,\xi'_{21})\longrightarrow 1 \otimes b(\eta'_3,\xi'_3,\xi'_2,\xi'_1) \otimes 1 $, 
		$b(j_2')\longrightarrow 1 \otimes b(\xi'_2) \otimes \xi_2 $, 
		$b(j_2')\longrightarrow 1 \otimes b(\xi'_{21}) \otimes \xi_{12} $, 
		$b(j_2')\longrightarrow \xi_3 \otimes b(\xi'_3) \otimes 1 $, 
		$b(j_2')\longrightarrow 1 \otimes b(\rho'_2) \otimes \rho_2 $, 
		$b(j_0')\longrightarrow 1 \otimes b(\rho'_0) \otimes \rho_0 $, 
		$b(j_0')\longrightarrow 1 \otimes b(\eta'_{321}) \otimes \eta_{123} $, 
		$b(j_0')\longrightarrow 1 \otimes b(\eta'_3) \otimes \eta_3 $, 
		$b(j_0')\longrightarrow 1 \otimes b(\eta'_{32}) \otimes \eta_{23} $, 
		$b(\eta'_3,\eta'_2)\longrightarrow 1 \otimes b(\eta'_3,\eta'_2,\eta'_1) \otimes \eta_1 $,  
		$b(\xi'_3,\rho'_2,\xi'_1,\eta'_1)\longrightarrow \eta_3 \otimes b(\eta'_3,\xi'_3,\rho'_2,\xi'_1,\eta'_1) \otimes 1 $, 
		$b(i_1')\longrightarrow \xi_{12} \otimes b(\xi'_{21}) \otimes 1 $, 
		$b(i_1')\longrightarrow \xi_1 \otimes b(\xi'_1) \otimes 1 $, 
		$b(i_1')\longrightarrow 1 \otimes b(\eta'_1) \otimes \eta_1 $, 
		$b(i_1')\longrightarrow \eta_{23} \otimes b(\eta'_{32}) \otimes 1 $, 
		$b(i_1')\longrightarrow \xi_{123} \otimes b(\xi'_{321}) \otimes 1 $, 
		$b(i_1')\longrightarrow \eta_2 \otimes b(\eta'_2) \otimes 1 $, 
		$b(\xi'_{21})\longrightarrow 1 \otimes b(\xi'_{21},\eta'_1) \otimes \eta_1 $, 
		$b(\xi'_{21})\longrightarrow \xi_3 \otimes b(\xi'_3,\xi'_{21}) \otimes 1 $, 
		$b(\xi'_{21})\longrightarrow 1 \otimes b(\xi'_2,\xi'_1) \otimes 1 $, 
		$b(\rho'_2,\xi'_1)\longrightarrow 1 \otimes b(\rho'_2,\xi'_1,\eta'_1) \otimes \eta_1 $, 
		$b(\rho'_2,\xi'_1)\longrightarrow \xi_3 \otimes b(\xi'_3,\rho'_2,\xi'_1) \otimes 1 $, 
		$b(i_0')\longrightarrow \rho_0 \otimes b(\rho'_0) \otimes 1 $, 
		$b(i_0')\longrightarrow \eta_{12} \otimes b(\eta'_{21}) \otimes 1 $, 
		$b(i_0')\longrightarrow \eta_{123} \otimes b(\eta'_{321}) \otimes 1 $, 
		$b(i_0')\longrightarrow \eta_1 \otimes b(\eta'_1) \otimes 1 $, 
		$b(\xi'_1)\longrightarrow \xi_{23} \otimes b(\xi'_{32},\xi'_1) \otimes 1 $, 
		$b(\xi'_1)\longrightarrow \rho_2 \otimes b(\rho'_2,\xi'_1) \otimes 1 $, 
		$b(\xi'_1)\longrightarrow \xi_2 \otimes b(\xi'_2,\xi'_1) \otimes 1 $, 
		$b(\xi'_1)\longrightarrow 1 \otimes b(\xi'_1,\eta'_1) \otimes \eta_1 $, 
		$b(\eta'_1)\longrightarrow \xi_{12} \otimes b(\xi'_{21},\eta'_1) \otimes 1 $, 
		$b(\eta'_1)\longrightarrow \xi_{123} \otimes b(\xi'_{321},\eta'_1) \otimes 1 $, 
		$b(\eta'_1)\longrightarrow \eta_{23} \otimes b(\eta'_{32},\eta'_1) \otimes 1 $, 
		$b(\eta'_1)\longrightarrow \eta_2 \otimes b(\eta'_2,\eta'_1) \otimes 1 $, 
		$b(\eta'_1)\longrightarrow \xi_1 \otimes b(\xi'_1,\eta'_1) \otimes 1 $, 
		$b(\xi'_3)\longrightarrow 1 \otimes b(\xi'_3,\xi'_2) \otimes \xi_2 $, 
		$b(\xi'_3)\longrightarrow 1 \otimes b(\xi'_3,\xi'_{21}) \otimes \xi_{12} $, 
		$b(\xi'_3)\longrightarrow 1 \otimes b(\xi'_3,\rho'_2) \otimes \rho_2 $, 
		$b(\xi'_3)\longrightarrow \eta_3 \otimes b(\eta'_3,\xi'_3) \otimes 1 $, 
		$b(\eta'_3,\xi'_{321})\longrightarrow 1 \otimes b(\eta'_3,\xi'_3,\xi'_{21}) \otimes 1 $, 
		$b(\eta'_3,\xi'_{321})\longrightarrow 1 \otimes b(\eta'_3,\xi'_{32},\xi'_1) \otimes 1 $, 
		$b(\eta'_3,\xi'_{321})\longrightarrow 1 \otimes b(\eta'_3,\xi'_{321},\eta'_1) \otimes \eta_1 $, 
		$b(\xi'_3,\xi'_{21},\eta'_1)\longrightarrow 1 \otimes b(\xi'_3,\xi'_2,\xi'_1,\eta'_1) \otimes 1 $, 
		$b(\xi'_3,\xi'_{21},\eta'_1)\longrightarrow \eta_3 \otimes b(\eta'_3,\xi'_3,\xi'_{21},\eta'_1) \otimes 1 $, 
		$b(\eta'_{32})\longrightarrow 1 \otimes b(\eta'_3,\eta'_2) \otimes 1 $, 
		$b(\eta'_{32})\longrightarrow 1 \otimes b(\eta'_{32},\eta'_1) \otimes \eta_1 $, 
		$b(\rho'_2)\longrightarrow 1 \otimes b(\rho'_2,\xi'_1) \otimes \xi_1 $, 
		$b(\rho'_2)\longrightarrow \xi_3 \otimes b(\xi'_3,\rho'_2) \otimes 1 $, 
		$b(\eta'_3,\eta'_{21})\longrightarrow 1 \otimes b(\eta'_3,\eta'_2,\eta'_1) \otimes 1 $, 
		$b(\eta'_3,\xi'_3,\rho'_2)\longrightarrow 1 \otimes b(\eta'_3,\xi'_3,\rho'_2,\xi'_1) \otimes \xi_1 $, 
		$b(\xi'_{321})\longrightarrow 1 \otimes b(\xi'_{32},\xi'_1) \otimes 1 $, 
		$b(\xi'_{321})\longrightarrow 1 \otimes b(\xi'_{321},\eta'_1) \otimes \eta_1 $, 
		$b(\xi'_{321})\longrightarrow 1 \otimes b(\xi'_3,\xi'_{21}) \otimes 1 $, 
		$b(\xi'_{321})\longrightarrow \eta_3 \otimes b(\eta'_3,\xi'_{321}) \otimes 1 $, 
		$b(\xi'_2,\xi'_1)\longrightarrow 1 \otimes b(\xi'_2,\xi'_1,\eta'_1) \otimes \eta_1 $, 
		$b(\xi'_2,\xi'_1)\longrightarrow \xi_3 \otimes b(\xi'_3,\xi'_2,\xi'_1) \otimes 1 $, 
		$b(\xi'_3,\rho'_2,\xi'_1)\longrightarrow \eta_3 \otimes b(\eta'_3,\xi'_3,\rho'_2,\xi'_1) \otimes 1 $, 
		$b(\xi'_3,\rho'_2,\xi'_1)\longrightarrow 1 \otimes b(\xi'_3,\rho'_2,\xi'_1,\eta'_1) \otimes \eta_1 $, 
		$b(\eta'_3,\xi'_3,\xi'_{21},\eta'_1)\longrightarrow 1 \otimes b(\eta'_3,\xi'_3,\xi'_2,\xi'_1,\eta'_1) \otimes 1 $, 
		$b(\eta'_{32},\eta'_1)\longrightarrow 1 \otimes b(\eta'_3,\eta'_2,\eta'_1) \otimes 1 $, 
		$b(\xi'_3,\rho'_2)\longrightarrow \eta_3 \otimes b(\eta'_3,\xi'_3,\rho'_2) \otimes 1 $, 
		$b(\xi'_3,\rho'_2)\longrightarrow 1 \otimes b(\xi'_3,\rho'_2,\xi'_1) \otimes \xi_1 $, 
		$b(\eta'_3,\xi'_{32},\xi'_1)\longrightarrow 1 \otimes b(\eta'_3,\xi'_{32},\xi'_1,\eta'_1) \otimes \eta_1 $, 
		$b(\eta'_3,\xi'_{32},\xi'_1)\longrightarrow 1 \otimes b(\eta'_3,\xi'_3,\xi'_2,\xi'_1) \otimes 1 $, 
		$b(\eta'_2)\longrightarrow \eta_3 \otimes b(\eta'_3,\eta'_2) \otimes 1 $, 
		$b(\eta'_2)\longrightarrow 1 \otimes b(\eta'_2,\eta'_1) \otimes \eta_1 $, 
		$b(\eta'_3,\xi'_3,\xi'_2,\xi'_1)\longrightarrow 1 \otimes b(\eta'_3,\xi'_3,\xi'_2,\xi'_1,\eta'_1) \otimes \eta_1 $, 
		$b(\eta'_2,\eta'_1)\longrightarrow \eta_3 \otimes b(\eta'_3,\eta'_2,\eta'_1) \otimes 1 $, 
		$b(\eta'_3,\xi'_{321},\eta'_1)\longrightarrow 1 \otimes b(\eta'_3,\xi'_{32},\xi'_1,\eta'_1) \otimes 1 $, 
		$b(\eta'_3,\xi'_{321},\eta'_1)\longrightarrow 1 \otimes b(\eta'_3,\xi'_3,\xi'_{21},\eta'_1) \otimes 1 $, 
		$b(\eta'_3,\xi'_3)\longrightarrow 1 \otimes b(\eta'_3,\xi'_3,\xi'_2) \otimes \xi_2 $, 
		$b(\eta'_3,\xi'_3)\longrightarrow 1 \otimes b(\eta'_3,\xi'_3,\xi'_{21}) \otimes \xi_{12} $, 
		$b(\eta'_3,\xi'_3)\longrightarrow 1 \otimes b(\eta'_3,\xi'_3,\rho'_2) \otimes \rho_2 $, 
		$b(\xi'_{32},\xi'_1,\eta'_1)\longrightarrow \eta_3 \otimes b(\eta'_3,\xi'_{32},\xi'_1,\eta'_1) \otimes 1 $, 
		$b(\xi'_{32},\xi'_1,\eta'_1)\longrightarrow 1 \otimes b(\xi'_3,\xi'_2,\xi'_1,\eta'_1) \otimes 1 $, 
		$b(\xi'_1,\eta'_1)\longrightarrow \xi_2 \otimes b(\xi'_2,\xi'_1,\eta'_1) \otimes 1 $, 
		$b(\xi'_1,\eta'_1)\longrightarrow \rho_2 \otimes b(\rho'_2,\xi'_1,\eta'_1) \otimes 1 $, 
		$b(\xi'_1,\eta'_1)\longrightarrow \xi_{23} \otimes b(\xi'_{32},\xi'_1,\eta'_1) \otimes 1 $, 
		$b(\xi'_3,\xi'_2,\xi'_1)\longrightarrow 1 \otimes b(\xi'_3,\xi'_2,\xi'_1,\eta'_1) \otimes \eta_1 $, 
		$b(\xi'_3,\xi'_2,\xi'_1)\longrightarrow \eta_3 \otimes b(\eta'_3,\xi'_3,\xi'_2,\xi'_1) \otimes 1 $.  
		\end{any}
	}

\printbibliography

@article{pqMod,
  author = {Zibrowius, Claudius},
  title = {Peculiar modules for 4-ended tangles},
  journal = {Journal of Topology},
  volume = {13},
  number = {1},
  pages = {77-158},
  keywords = {57M27 (primary), 57R58 (secondary)},
  doi = {10.1112/topo.12120},
  url = {https://arxiv.org/pdf/1712.05050.pdf},
  eprint = {https://londmathsoc.onlinelibrary.wiley.com/doi/pdf/10.1112/topo.12120},
  year = {2020}
}

@article {Aur1,
    AUTHOR = {Auroux, Denis},
     TITLE = {Fukaya categories of symmetric products and bordered
              {H}eegaard-{F}loer homology},
   JOURNAL = {J. G\"okova Geom. Topol. GGT},
  FJOURNAL = {Journal of G\"okova Geometry Topology. GGT},
    VOLUME = {4},
      YEAR = {2010},
     PAGES = {1--54},
      ISSN = {1935-2565},
   MRCLASS = {53D37 (53D10 53D12 57M27)},
  MRNUMBER = {2755992},
MRREVIEWER = {David E. Hurtubise},
url={https://arxiv.org/abs/1001.4323}
}

@inproceedings {Aur2,
    AUTHOR = {Auroux, Denis},
     TITLE = {Fukaya categories and bordered {H}eegaard-{F}loer homology},
 BOOKTITLE = {Proceedings of the {I}nternational {C}ongress of
              {M}athematicians. {V}olume {II}},
     PAGES = {917--941},
 PUBLISHER = {Hindustan Book Agency, New Delhi},
      YEAR = {2010},
   MRCLASS = {53D37 (53D40 57M27 57R58)},
  MRNUMBER = {2827825},
MRREVIEWER = {Tobias Ekholm},
url={https://arxiv.org/abs/1003.2962}
}

@incollection {Aur3,
    AUTHOR = {Auroux, Denis},
     TITLE = {A beginner's introduction to {F}ukaya categories},
 BOOKTITLE = {Contact and symplectic topology},
    SERIES = {Bolyai Soc. Math. Stud.},
    VOLUME = {26},
     PAGES = {85--136},
 PUBLISHER = {J\'anos Bolyai Math. Soc., Budapest},
      YEAR = {2014},
   MRCLASS = {53D37 (53D40)},
  MRNUMBER = {3220941},
MRREVIEWER = {Nick Sheridan},
       URL = {https://doi.org/10.1007/978-3-319-02036-5_3},
}

@misc{Pyt,
       author = {Kotelskiy, Artem},
       title = {Python program for algebraic computations in bordered theories: github.com/artofkot/computing\_bordered},
        year = {2018},
}

@book {KM,
    AUTHOR = {Kronheimer, P. B. and Mrowka, T. S.},
     TITLE = {Monopoles and three-manifolds},
    SERIES = {New Mathematical Monographs},
    VOLUME = {10},
 PUBLISHER = {Cambridge University Press, Cambridge},
      YEAR = {2007},
     PAGES = {xii+796},
      ISBN = {978-0-521-88022-0},
   MRCLASS = {57R57 (53C27 57N10 57R58)},
  MRNUMBER = {2388043},
MRREVIEWER = {Vicente Mu\~noz},
       URL = {https://doi.org/10.1017/CBO9780511543111},
}

@article{LOT-main,
   AUTHOR = {Lipshitz, Robert and Ozsv\'ath, Peter S. and Thurston, Dylan P.},
     TITLE = {Bordered {H}eegaard {F}loer homology},
   JOURNAL = {Mem. Amer. Math. Soc.},
  FJOURNAL = {Memoirs of the American Mathematical Society},
    VOLUME = {254},
      YEAR = {2018},
    NUMBER = {1216},
     PAGES = {viii+279},
      ISSN = {0065-9266},
      ISBN = {978-1-4704-2888-4; 978-1-4704-4748-9},
   MRCLASS = {57R58 (53D40 57M27 57R57)},
  MRNUMBER = {3827056},
       DOI = {10.1090/memo/1216},
       URL = {https://doi-org.proxyiub.uits.iu.edu/10.1090/memo/1216},
}

@article {LOT-bim,
    AUTHOR = {Lipshitz, Robert and Ozsv\'ath, Peter S. and Thurston, Dylan P.},
     TITLE = {Bimodules in bordered {H}eegaard {F}loer homology},
   JOURNAL = {Geom. Topol.},
  FJOURNAL = {Geometry \& Topology},
    VOLUME = {19},
      YEAR = {2015},
    NUMBER = {2},
     PAGES = {525--724},
      ISSN = {1465-3060},
   MRCLASS = {57R57 (53D40)},
  MRNUMBER = {3336273},
MRREVIEWER = {Brendan E. Owens},
       URL = {https://doi.org/10.2140/gt.2015.19.525},
}

@article {LOT-arc,
    AUTHOR = {Lipshitz, Robert and Ozsv\'ath, Peter S. and Thurston, Dylan P.},
     TITLE = {Computing {$\widehat{HF}$} by factoring mapping classes},
   JOURNAL = {Geom. Topol.},
  FJOURNAL = {Geometry \& Topology},
    VOLUME = {18},
      YEAR = {2014},
    NUMBER = {5},
     PAGES = {2547--2681},
      ISSN = {1465-3060},
   MRCLASS = {57M27 (53D40)},
  MRNUMBER = {3285222},
MRREVIEWER = {Andrew J. Lobb},
       URL = {https://doi.org/10.2140/gt.2014.18.2547},
}

@article {LOT-mor,
 AUTHOR = {Lipshitz, Robert and Ozsv\'ath, Peter S. and Thurston, Dylan P.},
     TITLE = {Heegaard {F}loer homology as morphism spaces},
   JOURNAL = {Quantum Topol.},
  FJOURNAL = {Quantum Topology},
    VOLUME = {2},
      YEAR = {2011},
    NUMBER = {4},
     PAGES = {381--449},
      ISSN = {1663-487X},
   MRCLASS = {57R58 (57M27)},
  MRNUMBER = {2844535},
MRREVIEWER = {Adam M. Lowrance},
       DOI = {10.4171/qt/25},
       URL = {https://doi-org.proxyiub.uits.iu.edu/10.4171/qt/25},
}

@article {W,
    AUTHOR = {Wajnryb, Bronislaw},
     TITLE = {An elementary approach to the mapping class group of a
              surface},
   JOURNAL = {Geom. Topol.},
  FJOURNAL = {Geometry and Topology},
    VOLUME = {3},
      YEAR = {1999},
     PAGES = {405--466},
      ISSN = {1465-3060},
   MRCLASS = {20F38 (20F05 57M05)},
  MRNUMBER = {1726532},
MRREVIEWER = {Patricia J. M. Grasse},
       URL = {https://doi.org/10.2140/gt.1999.3.405},
}

@article{Zarev,
  title={Bordered Floer homology for sutured manifolds},
  author={Zarev, Rumen},
  year={2009},
  journal={arXiv preprint arXiv:0908.1106}
}

@article {Boh,
    AUTHOR = {Zhan, Bohua},
     TITLE = {Combinatorial proofs in bordered {H}eegaard {F}loer homology},
   JOURNAL = {Algebr. Geom. Topol.},
  FJOURNAL = {Algebraic \& Geometric Topology},
    VOLUME = {16},
      YEAR = {2016},
    NUMBER = {5},
     PAGES = {2571--2636},
      ISSN = {1472-2747},
   MRCLASS = {57R58 (57R56)},
  MRNUMBER = {3572341},
MRREVIEWER = {Paolo Ghiggini},
       URL = {https://doi.org/10.2140/agt.2016.16.2571},
}

@incollection {OS-hfk-1,
    AUTHOR = {Ozsv\'ath, Peter and Szab\'o, Zolt\'an},
     TITLE = {An overview of knot {F}loer homology},
 BOOKTITLE = {Modern geometry: a celebration of the work of {S}imon
              {D}onaldson},
    SERIES = {Proc. Sympos. Pure Math.},
    VOLUME = {99},
     PAGES = {213--249},
 PUBLISHER = {Amer. Math. Soc., Providence, RI},
      YEAR = {2018},
   MRCLASS = {57M27 (57R58)},
  MRNUMBER = {3838884},
MRREVIEWER = {Yi Ni},
}

@article{OS-hfk-2,
  AUTHOR = {Ozsv\'ath, Peter and Szab\'o, Zolt\'an},
     TITLE = {Kauffman states, bordered algebras, and a bigraded knot
              invariant},
   JOURNAL = {Adv. Math.},
  FJOURNAL = {Advances in Mathematics},
    VOLUME = {328},
      YEAR = {2018},
     PAGES = {1088--1198},
      ISSN = {0001-8708},
   MRCLASS = {57M27},
  MRNUMBER = {3771149},
MRREVIEWER = {Adam M. Lowrance},
       DOI = {10.1016/j.aim.2018.02.017},
       URL = {https://doi-org.proxyiub.uits.iu.edu/10.1016/j.aim.2018.02.017},
}

@article{OS-hfk-3,
  title={Bordered knot algebras with matchings},
  AUTHOR = {Ozsv\'ath, Peter and Szab\'o, Zolt\'an},
  year={2017},
  journal={arXiv preprint arXiv:1707.00597},
}

@article {OS-main-1,
    AUTHOR = {Ozsv\'ath, Peter and Szab\'o, Zolt\'an},
     TITLE = {Holomorphic disks and topological invariants for closed
              three-manifolds},
   JOURNAL = {Ann. of Math. (2)},
  FJOURNAL = {Annals of Mathematics. Second Series},
    VOLUME = {159},
      YEAR = {2004},
    NUMBER = {3},
     PAGES = {1027--1158},
      ISSN = {0003-486X},
   MRCLASS = {57M27 (32Q65 57R58)},
  MRNUMBER = {2113019},
MRREVIEWER = {Thomas E. Mark},
       URL = {https://doi.org/10.4007/annals.2004.159.1027},
}

@article {OS-main-2,
    AUTHOR = {Ozsv\'ath, Peter and Szab\'o, Zolt\'an},
     TITLE = {Holomorphic disks and three-manifold invariants: properties
              and applications},
   JOURNAL = {Ann. of Math. (2)},
  FJOURNAL = {Annals of Mathematics. Second Series},
    VOLUME = {159},
      YEAR = {2004},
    NUMBER = {3},
     PAGES = {1159--1245},
      ISSN = {0003-486X},
   MRCLASS = {57M27 (32Q65 57R58)},
  MRNUMBER = {2113020},
MRREVIEWER = {Thomas E. Mark},
       URL = {https://doi.org/10.4007/annals.2004.159.1159},
}

@article {OS-hfk-0,
    AUTHOR = {Ozsv\'ath, Peter and Szab\'o, Zolt\'an},
     TITLE = {Holomorphic disks and knot invariants},
   JOURNAL = {Adv. Math.},
  FJOURNAL = {Advances in Mathematics},
    VOLUME = {186},
      YEAR = {2004},
    NUMBER = {1},
     PAGES = {58--116},
      ISSN = {0001-8708},
   MRCLASS = {57M27 (57R58)},
  MRNUMBER = {2065507},
MRREVIEWER = {Stanislav Jabuka},
       URL = {https://doi.org/10.1016/j.aim.2003.05.001},
}

@article {Manion,
    AUTHOR = {Manion, Andrew},
     TITLE = {On bordered theories for {K}hovanov homology},
   JOURNAL = {Algebr. Geom. Topol.},
  FJOURNAL = {Algebraic \& Geometric Topology},
    VOLUME = {17},
      YEAR = {2017},
    NUMBER = {3},
     PAGES = {1557--1674},
      ISSN = {1472-2747},
   MRCLASS = {57M27},
  MRNUMBER = {3677935},
       URL = {https://doi.org/10.2140/agt.2017.17.1557},
}

@article {KM-knot,
    AUTHOR = {Kronheimer, P. B. and Mrowka, T. S.},
     TITLE = {Knot homology groups from instantons},
   JOURNAL = {J. Topol.},
  FJOURNAL = {Journal of Topology},
    VOLUME = {4},
      YEAR = {2011},
    NUMBER = {4},
     PAGES = {835--918},
      ISSN = {1753-8416},
   MRCLASS = {57R58 (57M27)},
  MRNUMBER = {2860345},
MRREVIEWER = {Nikolai N. Saveliev},
       URL = {https://doi.org/10.1112/jtopol/jtr024},
}

@article {KM-sut,
    AUTHOR = {Kronheimer, P. B. and Mrowka, T. S.},
     TITLE = {Knots, sutures, and excision},
   JOURNAL = {J. Differential Geom.},
  FJOURNAL = {Journal of Differential Geometry},
    VOLUME = {84},
      YEAR = {2010},
    NUMBER = {2},
     PAGES = {301--364},
      ISSN = {0022-040X},
   MRCLASS = {57R58 (57M25 57M27)},
  MRNUMBER = {2652464},
MRREVIEWER = {Nikolai N. Saveliev},
       URL = {http://projecteuclid.org/euclid.jdg/1274707316},
}

@article {KM-Kh,
    AUTHOR = {Kronheimer, P. B. and Mrowka, T. S.},
     TITLE = {Khovanov homology is an unknot-detector},
   JOURNAL = {Publ. Math. Inst. Hautes \'Etudes Sci.},
  FJOURNAL = {Publications Math\'ematiques. Institut de Hautes \'Etudes
              Scientifiques},
    NUMBER = {113},
      YEAR = {2011},
     PAGES = {97--208},
      ISSN = {0073-8301},
   MRCLASS = {57M27 (18G40 57R58)},
  MRNUMBER = {2805599},
MRREVIEWER = {Raphael Zentner},
       URL = {https://doi.org/10.1007/s10240-010-0030-y},
}

@article {Flo-ins,
    AUTHOR = {Floer, Andreas},
     TITLE = {An instanton-invariant for {$3$}-manifolds},
   JOURNAL = {Comm. Math. Phys.},
  FJOURNAL = {Communications in Mathematical Physics},
    VOLUME = {118},
      YEAR = {1988},
    NUMBER = {2},
     PAGES = {215--240},
      ISSN = {0010-3616},
   MRCLASS = {57N10 (58G05 58G10 58G25)},
  MRNUMBER = {956166},
MRREVIEWER = {Ronald J. Stern},
       URL = {http://projecteuclid.org/euclid.cmp/1104161987},
}

@article {FKP,
    AUTHOR = {Fukumoto, Yoshihiro and Kirk, Paul and Pinz\'on-Caicedo,
              Juanita},
     TITLE = {Traceless {$SU(2)$} representations of 2-stranded tangles},
   JOURNAL = {Math. Proc. Cambridge Philos. Soc.},
  FJOURNAL = {Mathematical Proceedings of the Cambridge Philosophical
              Society},
    VOLUME = {162},
      YEAR = {2017},
    NUMBER = {1},
     PAGES = {101--129},
      ISSN = {0305-0041},
   MRCLASS = {57M25 (57M05)},
  MRNUMBER = {3581901},
MRREVIEWER = {Yuanan Diao},
       URL = {https://doi.org/10.1017/S0305004116000360},
}

@article{HRW,
  title={Bordered Floer homology for manifolds with torus boundary via immersed curves},
  author={Hanselman, Jonathan and Rasmussen, Jacob and Watson, Liam},
  year={2016},
  journal={arXiv preprint arXiv:1604.03466},

}

@article{HK,
  title={Holonomy perturbations in a cylinder, and regularity for traceless SU (2) character varieties of tangles},
  author={Herald, Christopher M and Kirk, Paul},
  year={2015},
  journal={arXiv preprint arXiv:1511.00308},
}

@article {HHK1,
    AUTHOR = {Hedden, Matthew and Herald, Christopher M. and Kirk, Paul},
     TITLE = {The pillowcase and perturbations of traceless representations
              of knot groups},
   JOURNAL = {Geom. Topol.},
  FJOURNAL = {Geometry \& Topology},
    VOLUME = {18},
      YEAR = {2014},
    NUMBER = {1},
     PAGES = {211--287},
      ISSN = {1465-3060},
   MRCLASS = {57M27},
  MRNUMBER = {3158776},
MRREVIEWER = {Sergiy Koshkin},
       URL = {https://doi.org/10.2140/gt.2014.18.211},
}

@article{HHK2,
  title={The pillowcase and traceless representations of knot groups II: a Lagrangian-Floer theory in the pillowcase},
  author={Hedden, Matthew and Herald, Christopher M and Kirk, Paul},
  year={2014},
  journal={arXiv preprint arXiv:1501.00028},
}

@article {Kirk,
    AUTHOR = {Kirk, Paul},
     TITLE = {On the traceless {${\rm SU}(2)$} character variety of the
              6-punctured 2-sphere},
   JOURNAL = {J. Knot Theory Ramifications},
  FJOURNAL = {Journal of Knot Theory and its Ramifications},
    VOLUME = {26},
      YEAR = {2017},
    NUMBER = {2},
     PAGES = {1740009, 21},
      ISSN = {0218-2165},
   MRCLASS = {57M05 (53D30)},
  MRNUMBER = {3604491},
MRREVIEWER = {Yoshikazu Yamaguchi},
       URL = {https://doi.org/10.1142/S0218216517400090},
}

@article{DF,
  title={Atiyah-Floer Conjecture: a Formulation, a Strategy to Prove and Generalizations},
  author={Daemi, Aliakbar and Fukaya, Kenji},
  year={2017},
  journal={arXiv preprint arXiv:1707.03924},
}

@article {Abouzaid,
    AUTHOR = {Abouzaid, Mohammed},
     TITLE = {On the {F}ukaya categories of higher genus surfaces},
   JOURNAL = {Adv. Math.},
  FJOURNAL = {Advances in Mathematics},
    VOLUME = {217},
      YEAR = {2008},
    NUMBER = {3},
     PAGES = {1192--1235},
      ISSN = {0001-8708},
   MRCLASS = {53D40 (18F99 53D12 57R17)},
  MRNUMBER = {2383898},
MRREVIEWER = {Michael J. Usher},
       URL = {https://doi.org/10.1016/j.aim.2007.08.011},
}

@article{Atiyah,
  title={New invariants of 3-and 4-dimensional manifolds},
  author={Atiyah, Michael},
  journal={The mathematical heritage of Hermann Weyl (Durham, NC, 1987)},
  volume={48},
  pages={285--299},
  year={1988}
}

@article {Manol,
    AUTHOR = {Manolescu, Ciprian},
     TITLE = {Floer theory and its topological applications},
   JOURNAL = {Jpn. J. Math.},
  FJOURNAL = {Japanese Journal of Mathematics},
    VOLUME = {10},
      YEAR = {2015},
    NUMBER = {2},
     PAGES = {105--133},
      ISSN = {0289-2316},
   MRCLASS = {57R58 (57M25 57M27)},
  MRNUMBER = {3392529},
MRREVIEWER = {Ferit \"Ozt\"urk},
       URL = {https://doi.org/10.1007/s11537-015-1487-8},
}

@incollection {Manol-hfk,
    AUTHOR = {Manolescu, Ciprian},
     TITLE = {An introduction to knot {F}loer homology},
 BOOKTITLE = {Physics and mathematics of link homology},
    SERIES = {Contemp. Math.},
    VOLUME = {680},
     PAGES = {99--135},
 PUBLISHER = {Amer. Math. Soc., Providence, RI},
      YEAR = {2016},
   MRCLASS = {57M27 (57R58)},
  MRNUMBER = {3591644},
MRREVIEWER = {Laura P. Starkston},
URL = {https://arxiv.org/abs/1401.7107}
}

@article {Rob-1,
    AUTHOR = {Roberts, Lawrence P.},
     TITLE = {A type {$A$} structure in {K}hovanov homology},
   JOURNAL = {Algebr. Geom. Topol.},
  FJOURNAL = {Algebraic \& Geometric Topology},
    VOLUME = {16},
      YEAR = {2016},
    NUMBER = {6},
     PAGES = {3653--3719},
      ISSN = {1472-2747},
   MRCLASS = {57M27 (55N35)},
  MRNUMBER = {3584271},
MRREVIEWER = {Stefan K. Friedl},
       URL = {https://doi.org/10.2140/agt.2016.16.3653},
}

@article {Rob-2,
    AUTHOR = {Roberts, Lawrence P.},
     TITLE = {A type {$D$} structure in {K}hovanov homology},
   JOURNAL = {Adv. Math.},
  FJOURNAL = {Advances in Mathematics},
    VOLUME = {293},
      YEAR = {2016},
     PAGES = {81--145},
      ISSN = {0001-8708},
   MRCLASS = {57M27},
  MRNUMBER = {3474320},
MRREVIEWER = {Hernando Burgos-Soto},
       URL = {https://doi.org/10.1016/j.aim.2016.02.007},
}

@article {SRS,
    AUTHOR = {de Silva, Vin and Robbin, Joel W. and Salamon, Dietmar A.},
     TITLE = {Combinatorial {F}loer homology},
   JOURNAL = {Mem. Amer. Math. Soc.},
  FJOURNAL = {Memoirs of the American Mathematical Society},
    VOLUME = {230},
      YEAR = {2014},
    NUMBER = {1080},
     PAGES = {v+114},
      ISSN = {0065-9266},
      ISBN = {978-0-8218-9886-4},
   MRCLASS = {57R58},
  MRNUMBER = {3205426},
 URL = {https://arxiv.org/abs/1205.0533}
}

@incollection {Juh,
    AUTHOR = {Juh\'asz, Andr\'as},
     TITLE = {A survey of {H}eegaard {F}loer homology},
 BOOKTITLE = {New ideas in low dimensional topology},
    SERIES = {Ser. Knots Everything},
    VOLUME = {56},
     PAGES = {237--296},
 PUBLISHER = {World Sci. Publ., Hackensack, NJ},
      YEAR = {2015},
   MRCLASS = {57R58 (57M27)},
  MRNUMBER = {3381327},
MRREVIEWER = {Brendan E. Owens},
       URL = {https://doi.org/10.1142/9789814630627_0007},
}

@article{Kut,
  title={Seiberg-Witten knot homology via holonomy filtration},
  author={Kutluhan, {\c{C}}agatay},
  year={2013},
  publisher={Citeseer}
}

@article{Hor-1,
  title={A functorial symplectic instanton homology via traceless character varieties},
  author={Horton, Henry T.},
  year={2016},
  journal={arXiv preprint arXiv:1611.09927},
}

@article{Hor-2,
  title={Symplectic Instanton Homology, the Link Surgeries Spectral Sequence, and Khovanov Homology},
  author={Horton, Henry T},
  year={2017},
  journal={arXiv preprint arXiv:1702.02259},
}

@incollection {Flo-ins-knots,
    AUTHOR = {Floer, Andreas},
     TITLE = {Instanton homology, surgery, and knots},
 BOOKTITLE = {Geometry of low-dimensional manifolds, 1 ({D}urham, 1989)},
    SERIES = {London Math. Soc. Lecture Note Ser.},
    VOLUME = {150},
     PAGES = {97--114},
 PUBLISHER = {Cambridge Univ. Press, Cambridge},
      YEAR = {1990},
   MRCLASS = {57R57 (57N10 57R65 58G05)},
  MRNUMBER = {1171893},
MRREVIEWER = {Ronald J. Stern},
}

@article{Caz,
  title={Symplectic Instanton Homology: twisting, connected sums, and Dehn surgery},
  author={Cazassus, Guillem},
  year={2016},
  journal={arXiv preprint arXiv:1606.00239},
}

@article{KLT,
  title={HF= HM I: Heegaard Floer homology and Seiberg--Witten Floer homology},
  author={Kutluhan, {\c{C}}agatay and Lee, Yi-Jen and Taubes, Clifford Henry},
  year={2010},
  url={https://arxiv.org/abs/1007.1979},
  journal={arXiv preprint arXiv:1007.1979}
}

@article{CGH,
  title={Equivalence of Heegaard Floer homology and embedded contact homology via open book decompositions},
  author={Colin, Vincent and Ghiggini, Paolo and Honda, Ko},
  journal={Proceedings of the National Academy of Sciences},
  volume={108},
  number={20},
  pages={8100--8105},
  year={2011},
  publisher={National Acad Sciences},
  url={http://www.pnas.org/content/108/20/8100},

}

\end{document}